\documentclass{article}%
\usepackage{amsmath}
\usepackage{amsfonts}
\usepackage{amssymb}
\usepackage{graphicx}
\usepackage[colorlinks=true, pdfstartview=FitV, linkcolor=blue,citecolor=red,
urlcolor=blue,a4paper]{hyperref}
\usepackage{color}%
\providecommand{\U}[1]{\protect\rule{.1in}{.1in}}
\newtheorem{theorem}{Theorem}

\newtheorem{corollary}[theorem]{Corollary}

\newtheorem{lemma}[theorem]{Lemma}

\newtheorem{proposition}[theorem]{Proposition}

\newenvironment{proof}[1][Proof]{\noindent\textbf{#1.} }{\ \rule{0.5em}{0.5em}}
\allowdisplaybreaks
\begin{document}

\title{Analytic properties of the parametric harmonic zeta function, with
applications to harmonic Stieltjes constants}
\author{Merve Kara \"{O}zt\"{u}rk\thanks{merve.kara.buket@gmail.com} and M\"{u}m\"{u}n
Can\thanks{mcan@akdeniz.edu.tr}\\{\small Department of Mathematics, Akdeniz University, 07058-Antalya,
T\"{u}rkiye}}
\date{}
\maketitle

\begin{abstract}
This paper investigates the analytic structure of the parametric harmonic zeta function $\zeta_{H}\left(  s,a,b\right)  $, a Dirichlet series associated with generalized harmonic numbers. We establish its meromorphic continuation to the complex plane and determine the residues at all of its poles. We then derive a Taylor expansion for  $\zeta_{H}\left(  s,a,b+t\right)$, leading to harmonic analogues and extensions of classical identities due to Landau, Singh–Verma, and Srivastava. We further develop a systematic theory of the associated harmonic Stieltjes constants by deriving explicit formulas, including previously unknown cases, together with a limit representation for the first-order constants. Finally, we construct the harmonic analogue of the classical digamma function, establish its principal analytic properties and its connection with the harmonic Stieltjes constants, and obtain Raabe-type formulas for the parametric harmonic zeta function, the harmonic Stieltjes constants, and the harmonic digamma function.

\textbf{Keywords}: {Harmonic zeta function, Dirichlet series, Euler sum,
di\-gam\-ma function, analytic continuation.}

\textbf{Subclass}: {11M41, 30B50, 33B15, 30B40.}

\end{abstract}

\section{Introduction}

The harmonic zeta function, the Dirichlet series associated with the harmonic
numbers $H_{n}=1+1/2+\cdots+1/n$, is defined by
\[
\zeta_{H}\left(  s\right)  =\sum_{k=1}^{\infty}\frac{H_{k}}{k^{s}},\text{
}\operatorname{Re}\left(  s\right)  >1,
\]
and subject to many studies. Euler \cite[pp. 217--264]{E} has shown that
special values of the harmonic zeta function have relations to those of the
Riemann zeta values;%
\begin{equation}
2\zeta_{H}\left(  p\right)  =\left(  p+2\right)  \zeta\left(  p+1\right)
-\sum_{j=1}^{p-2}\zeta\left(  p-j\right)  \zeta\left(  j+1\right)  ,\text{
}p\in\mathbb{N}\backslash\left\{  1\right\}  , \label{ES}%
\end{equation}
where $\zeta\left(  s\right)  =\sum_{n=1}^{\infty}n^{-s}$ is the Riemann zeta
function. Apostol and Vu \cite{AV}, and Matsuoka \cite{Ma} have shown that the
function $\zeta_{H}$ can be holomorphically continued to the whole $s$-plane
except for the poles at $s=1$, $s=0$ and $s=1-2j,$ $j\in\mathbb{N}$. Later,
Boyadzhiev et al. \cite{BGP1}, and Candelpergher and Coppo \cite{CC} (see also
Can et al. \cite{CDK}) have dealt with the Laurent expansions at the poles of
$\zeta_{H}\left(  s\right)  $, and have recorded some formulas for the
corresponding coefficients. It should be mentioned that Alkan \cite{Al} has
demonstrated that certain real numbers and log-sine integrals can be closely
approximated using combinations of special values of the harmonic zeta
function and the Riemann zeta function. Recently, Alzer and Choi \cite{AC}
have introduced the following parametric Euler sums;%
\begin{align*}
S_{z,s}^{++}\left(  a,b\right)   &  =\sum_{n=1}^{\infty}\frac{\mathcal{H}%
_{n}^{\left(  z\right)  }\left(  a\right)  }{\left(  n+b\right)  ^{s}},\text{
}S_{z,s}^{+-}\left(  a,b\right)  =\sum_{n=1}^{\infty}\left(  -1\right)
^{n+1}\frac{\mathcal{H}_{n}^{\left(  z\right)  }\left(  a\right)  }{\left(
n+b\right)  ^{s}},\\
S_{z,s}^{-+}\left(  a,b\right)   &  =\sum_{n=1}^{\infty}\frac{\mathcal{A}%
_{n}^{\left(  z\right)  }\left(  a\right)  }{\left(  n+b\right)  ^{s}},\text{
}S_{z,s}^{--}\left(  a,b\right)  =\sum_{n=1}^{\infty}\left(  -1\right)
^{n+1}\frac{\mathcal{A}_{n}^{\left(  z\right)  }\left(  a\right)  }{\left(
n+b\right)  ^{s}},
\end{align*}
where%
\[
\mathcal{H}_{n}^{\left(  z\right)  }\left(  a\right)  =\sum_{k=1}^{n}\frac
{1}{\left(  k+a\right)  ^{z}}\text{ and }\mathcal{A}_{n}^{\left(  z\right)
}\left(  a\right)  =\sum_{k=1}^{n}\frac{\left(  -1\right)  ^{k-1}}{\left(
k+a\right)  ^{z}},
\]
$a,b\in\mathbb{C}\backslash\left\{  -1,-2,-3,\ldots\right\}  $ and $s,$
$z\in\mathbb{C}$ are adjusted so that the involved defining series converge.
They have investigated analytic continuations of $S_{z,s}^{++}\left(
a,a\right)  $, $S_{z,s}^{+-}\left(  a,a\right)  $, $S_{z,s}^{-+}\left(
a,a\right)  $ and $S_{z,s}^{--}\left(  a,a\right)  $ via summation formulas,
and have given shuffle relations. In particular, analytic continuations of
$S_{1,s}^{+-}\left(  0,0\right)  $, $S_{1,s}^{--}\left(  0,0\right)  $ and
$S_{1,s}^{-+}\left(  0,0\right)  $ have been investigated by Boyadzhiev et al.
in \cite{BGP2}.

Moreover, it is shown that the values $S_{1,m}^{-+}\left(  0,0\right)  ,$
$S_{1,2m}^{+-}\left(  0,0\right)  $ and $S_{1,2m}^{--}\left(  0,0\right)  $
(in \cite{Si} or see \cite{FlS}), $S_{1,m}^{++}\left(  0,1/2\right)  $ (in
\cite{SoCv}), $S_{1,2m}^{++}\left(  -1/2,0\right)  $ and $S_{1,2m}^{++}\left(
1/2,1/2\right)  $ (in \cite[Eqs. (8a) and (9a)]{J}) can be written in terms of
zeta values. In contrast to that for $m\in\mathbb{N}$, the values
$S_{1,2m+1}^{+-}\left(  0,0\right)  $, $S_{1,2m+1}^{--}\left(  0,0\right)  $,
$S_{1,2m+1}^{++}\left(  -1/2,0\right)  $ and $S_{1,2m+1}^{++}\left(
1/2,1/2\right)  $ do not directly admit any closed-form evaluations in terms
of zeta values as in (\ref{ES}). With this one, Alkan \cite{Al} shows that
$S_{1,m}^{++}\left(  -1/2,0\right)  $ can always be written as a combination
of log-sine integrals over $\left[  0,2\pi\right]  $ and $\left[
0,\pi\right]  $;
\[
I_{m,x}=\int\limits_{0}^{x}t^{m}\log\left(  2\sin\frac{t}{2}\right)  dt\text{
and }J_{m,x}=\int\limits_{0}^{x}t^{m}\log^{2}\left(  2\sin\frac{t}{2}\right)
dt.
\]
Quite recently, similar results have been recorded for the values
$S_{1,2m+1}^{+-}\left(  0,0\right)  $, $S_{1,2m+1}^{--}\left(  0,0\right)  $,
and $S_{1,2m+1}^{++}\left(  \frac{1}{2},\frac{1}{2}\right)  $ in \cite[p.
4]{CaKCD}. Furthermore, Karg\i n et al. \cite{KDCC} have investigated the
harmonic Stieltjes constants $\gamma_{H}\left(  m,a\right)  $, occurring in
the Laurent expansions of the function
\begin{equation}
\zeta_{H}\left(  s,a\right)  =S_{1,s}^{++}\left(  a-1,a-1\right)  =\sum
_{n=0}^{\infty}\frac{H_{n}\left(  a\right)  }{\left(  n+a\right)  ^{s}%
},\label{hhz}%
\end{equation}
where
\[
H_{n}\left(  a\right)  =\sum_{k=0}^{n}\frac{1}{k+a}.
\]
Among others, the authors have recorded closed-form expressions for the
harmonic Stieltjes constants $\gamma_{H,1}\left(  m,1,1\right)  $ and
$\gamma_{H,1}\left(  m,\frac{1}{2},\frac{1}{2}\right)  $ (see
(\ref{laurentzetahsab}) below). It should be mentioned that recent works
\cite{BGP1,BGP2}, \cite{CC} and \cite{KDCC} have addressed special cases:%
\[
\left\{
\begin{tabular}
[c]{ll}%
$\gamma_{H,1}\left(  m,a,a\right)  ,$ & $m\in\mathbb{N\cup}\left\{  0\right\}
,$\medskip\\
$\gamma_{H,-v}\left(  0,a,a\right)  $ and $\gamma_{H,-v}\left(  0,1,1\right)
,$ & $v\in\mathbb{N\cup}\left\{  0\right\}  .$%
\end{tabular}
\ \ \right.
\]
To the authors' knowledge, explicit representations for the constants
\[
\left\{
\begin{tabular}
[c]{ll}%
$\gamma_{H,-v}\left(  m,a,a\right)  $ & $m\geq1$ and $v\in\mathbb{N}%
\cup\left\{  0\right\}  ,$\medskip\\
$\gamma_{H,-v}\left(  m,1,1\right)  ,$ & $m\geq1$ and $v\in\mathbb{N}%
\cup\left\{  0\right\}  ,$%
\end{tabular}
\ \ \right.
\]
remain unknown. For the studies on $\zeta_{H}\left(  s\right)  =S_{1,s}%
^{++}\left(  0,0\right)  $ and on several types of Euler sums, see, for
example, \cite{BoBB,CKDS,EL,KCDC,LiQ,MCA,NKS,QSL,SoC}.

The primary objective of this study is to investigate the parametric harmonic
zeta function (or parametric Euler sum)%
\[
\zeta_{H}\left(  s,a,b\right)  :=S_{1,s}^{++}\left(  a-1,b-1\right)
=\sum_{n=0}^{\infty}\frac{H_{n}\left(  a\right)  }{\left(  n+b\right)  ^{s}},
\]
and related special functions, establishing new series expansions, summation
formulas, and explicit formulas.

We first establish its analytic continuation by using the Euler-Maclaurin
summation formula. More precisely, in Section \ref{secACon}, we prove the following:

\begin{theorem}
\label{teo1}Let $a,b\in\mathbb{R}\backslash\left\{  0,-1,-2,-3,\ldots\right\}
$. The function $\zeta_{H}\left(  s,a,b\right)  $ can be analytically
continued to the region $\mathbb{C}\backslash\left\{  k\in\mathbb{Z}%
:k\leq1\right\}  $. Moreover, $\zeta_{H}\left(  s,a,b\right)  $ has a double
pole at $s=1$ and simple poles at $s=-v,$ $v\in\mathbb{N}\cup\left\{
0\right\}  $ with residues $\gamma_{0}\left(  a\right)  $, and
\[
\left(  \frac{a-b}{v+1}+\frac{1}{2}\right)  \left(  b-a\right)  ^{v}%
-\sum\limits_{m=1}^{r}\frac{B_{2m}}{\left(  2m\right)  !}\left\langle
v\right\rangle _{2m-1}\left(  b-a\right)  ^{v-2m+1},
\]
respectively. Here $\gamma_{0}\left(  a\right)  $ is the generalized Stieltjes
constant defined by (\ref{hzLaurent}), $B_{n}$ is the $n$th Bernoulli number
defined by (\ref{Bn}), and $\left\langle x\right\rangle _{m}=x\left(
x-1\right)  \cdots\left(  x-\left(  m-1\right)  \right)  $ with the assumption
$\left\langle x\right\rangle _{0}=1$.
\end{theorem}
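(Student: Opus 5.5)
We would follow the route announced before the statement and use the Euler--Maclaurin summation formula. The idea is to write $\zeta_H(s,a,b)$ as a sum of Hurwitz zeta functions plus a remainder that is holomorphic in a half-plane we can push as far left as we like.

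First, fix $\operatorname{Re}(s)>1$ and exchange the order of summation:
\[
\zeta_H(s,a,b)=\sum_{k=0}^{\infty}\frac{1}{k+a}\sum_{n=k}^{\infty}\frac{1}{(n+b)^{s}}=\sum_{k=0}^{\infty}\frac{\zeta(s,k+b)}{k+a}.
\]
Here $\zeta(s,x)$ is the Hurwitz zeta function. Next, apply Euler--Maclaurin to each inner tail $\zeta(s,k+b)$, to order $2r$ with $r$ large:
\[
\zeta(s,k+b)=\frac{(k+b)^{1-s}}{s-1}+\frac{(k+b)^{-s}}{2}+\sum_{m=1}^{r}\frac{B_{2m}}{(2m)!}\langle -s\rangle_{2m-1}(k+b)^{-s-2m+1}+R_r(s,k+b),
\]
with $R_r(s,x)=O(|s|^{2r+1}x^{-\operatorname{Re}(s)-2r})$ uniformly on compact sets. (The sign convention $(-1)^{2m-1}\langle\cdot\rangle$ will be matched to the statement's $\langle v\rangle_{2m-1}$ at the end.) Since $\sum_k (k+a)^{-1}R_r(s,k+b)$ converges absolutely and locally uniformly for $\operatorname{Re}(s)>1-2r$, this part is holomorphic there.

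The main terms are of the form $\sum_k (k+a)^{-1}(k+b)^{-w}$ with $w=s-1,\ s,\ s+2m-1$. To handle them we expand
\[
\frac{1}{k+a}=\frac{1}{(k+b)\bigl(1-\frac{b-a}{k+b}\bigr)}=\sum_{j=0}^{J}\frac{(b-a)^{j}}{(k+b)^{j+1}}+O\bigl((k+b)^{-J-2}\bigr).
\]
This turns each main term into $\sum_j (b-a)^j\zeta(w+j+1,b)$ plus a holomorphic remainder. For $k$ small with $|b-a|\ge |k+b|$ we treat finitely many terms separately; they are entire in $s$. Consequently $\zeta_H(s,a,b)$ equals a finite combination of $\zeta(s+j,b)/(s-1)$, $\zeta(s+j+1,b)$ and $\langle -s\rangle_{2m-1}\zeta(s+2m+j,b)$ plus a function holomorphic in $\operatorname{Re}(s)>1-2r$. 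This gives meromorphic continuation with possible poles only at integers $\le 1$.

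Residues. At $s=1$, the term $\zeta(s,k+b)$ behaves like $\frac{1}{s-1}-\psi(k+b)+\dots$, and the $\frac{1}{s-1}\sum_k(k+a)^{-1}(k+b)^{1-s}$ piece produces a double pole. For the residue we expect the cleaner route is to pair $\frac{(k+b)^{1-s}}{s-1}$ with $\frac{1}{k+a}$ and compare with $\frac{1}{s-1}\zeta(s,a)$, whose Laurent data involve $\gamma_0(a)$. The coefficient of $(s-1)^{-1}$ should then collapse to $\gamma_0(a)$ once the $b$-dependent digamma contributions cancel. Verifying this cancellation is the step I expect to be the main obstacle: the $\psi(k+b)$ terms and the $\log(k+b)$ expansion of $(k+b)^{1-s}$ must telescope into $b$-free quantities.

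At $s=-v$, only finitely many terms are singular. These are:
\begin{itemize}
\item $\frac{(b-a)^{j}}{s-1}\zeta(s+j,b)$ with $j=v+1$, which has residue $\frac{(b-a)^{v+1}}{-v-1}$;
\item $\frac12(b-a)^{j}\zeta(s+j+1,b)$ with $j=v$, which has residue $\frac12(b-a)^v$;
\item $\frac{B_{2m}}{(2m)!}\langle -s\rangle_{2m-1}(b-a)^j\zeta(s+2m+j,b)$ with $j=v-2m+1\ge0$, which has residue with coefficient $\langle v\rangle_{2m-1}$ up to the sign fixed by $\langle -s\rangle_{2m-1}$ at $s=-v$.
\end{itemize}
Summing gives the stated expression with $r=\lfloor (v+1)/2\rfloor$. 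Terms with $2m-1>v$ vanish automatically because $\langle v\rangle_{2m-1}=0$. At $s=1$ no further simple-pole contributions arise besides those already counted.
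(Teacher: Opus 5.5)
Your strategy is the paper's own: write $\zeta_H(s,a,b)=\sum_{k\geq0}\zeta(s,k+b)/(k+a)$, apply Euler--Maclaurin to each tail, expand $1/(k+a)$ in powers of $(b-a)/(k+b)$, and read the residues off the Hurwitz poles. Your truncated expansion is more careful than the paper's use of the full series (\ref{*}), which needs $|b-a|<b$. With the exact remainder $\frac{(b-a)^{J+1}}{(k+a)(k+b)^{J+1}}$ you do not even need to split off small $k$. As written, however, two of the asserted residues are not established.

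First, the residue at $s=1$ is left open. Your claim that everything except the explicit Hurwitz terms is holomorphic in $\operatorname{Re}(s)>1-2r$ fails precisely there. The truncation remainder of the first main term, $\frac{(b-a)^{J+1}}{s-1}\sum_{k}\frac{1}{(k+a)(k+b)^{s+J}}$, carries the factor $1/(s-1)$, and it is exactly what removes the $b$-dependence. Including it, the residue is $\gamma_0(b)+\sum_{j=1}^{J}(b-a)^{j}\zeta(j+1,b)+(b-a)^{J+1}\sum_{k\geq0}\frac{1}{(k+a)(k+b)^{J+1}}$. For each $k$ one has
\[
\sum_{j=1}^{J}\frac{(b-a)^{j}}{(k+b)^{j+1}}+\frac{(b-a)^{J+1}}{(k+a)(k+b)^{J+1}}=\frac{1}{k+a}-\frac{1}{k+b},
\]
so the residue equals $-\psi(b)+(\psi(b)-\psi(a))=-\psi(a)=\gamma_0(a)$. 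This is the paper's argument with $J=\infty$.

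Your pairing idea also works and needs no $\psi(k+b)$ bookkeeping; the termwise expansion of $\zeta(s,k+b)$ at $s=1$ is a dead end, since $\sum_k 1/(k+a)$ diverges. From $(k+b)^{1-s}-(k+a)^{1-s}=(1-s)\int_a^b(k+u)^{-s}\,du$ one gets $\frac{1}{s-1}\sum_k\frac{(k+b)^{1-s}}{k+a}=\frac{\zeta(s,a)}{s-1}-\sum_k\frac{1}{k+a}\int_a^b(k+u)^{-s}\,du$. The last sum is holomorphic in $\operatorname{Re}(s)>0$, and all other terms are holomorphic at $s=1$. Hence the principal part there is $(s-1)^{-2}+\gamma_0(a)(s-1)^{-1}$.

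Second, the Bernoulli part of your Euler--Maclaurin expansion has the wrong sign, and this cannot be repaired by ``matching conventions at the end.'' For $f(t)=(t+b)^{-s}$ one has $f^{(2m-1)}(x)=\langle -s\rangle_{2m-1}(x+b)^{-s-2m+1}$, and (\ref{EM}) contains $-\sum_{m}\frac{B_{2m}}{(2m)!}f^{(2m-1)}(x)$. So the correct term is $-\frac{B_{2m}}{(2m)!}\langle -s\rangle_{2m-1}(k+b)^{-s-2m+1}=\frac{B_{2m}}{(2m)!}(s)_{2m-1}(k+b)^{-s-2m+1}$. For $m=1$ this is the familiar $+\frac{s}{12}x^{-s-1}$ in the expansion of $\zeta(s,x)$. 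With your sign, $\langle -s\rangle_{2m-1}$ at $s=-v$ equals $\langle v\rangle_{2m-1}$ exactly. Your third bullet therefore produces $+\frac{B_{2m}}{(2m)!}\langle v\rangle_{2m-1}(b-a)^{v-2m+1}$, the negative of what the theorem asserts. With the correct sign, $(-v)_{2m-1}=-\langle v\rangle_{2m-1}$ gives the stated minus sign.

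Your other two contributions, $-(b-a)^{v+1}/(v+1)$ and $\frac{1}{2}(b-a)^{v}$, are correct and agree with the paper, and terms with $2m-1>v$ indeed contribute nothing. With these two repairs your argument proves the theorem.
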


Secondly, we derive the Taylor series expansion for $\zeta_{H}\left(
s,a,b+t\right)  $ in a neighborhood of $t=0$, which serves as a generating
function and a foundational tool for subsequent investigations. We present it
in Section \ref{secHZeta}. Taking advantage of this Taylor expansion, we
generalize several classical formulas, such as Landau's formula
\begin{equation}
\zeta\left(  s\right)  =1+\frac{1}{s-1}-\sum_{j=1}^{\infty}\left(  -1\right)
^{j}\frac{\left(  s\right)  _{j}}{\left(  j+1\right)  !}\left\{  \zeta\left(
s+j\right)  -1\right\}  \label{Landau}%
\end{equation}
(cf. \cite[Ch. 3.2, Eq. (1)]{SC}) with $\left(  x\right)  _{m}=(-1)^{m}%
\left\langle -x\right\rangle _{m}$, Singh and Verma's formula
\begin{equation}
\zeta\left(  s\right)  =1+\frac{1}{2^{s+1}}\frac{s+3}{s-1}+\frac{1}{2}%
\sum_{j=1}^{\infty}\left(  -1\right)  ^{j+1}\frac{\left(  s\right)  _{j+1}%
}{\left(  j+2\right)  !}j\left(  \zeta\left(  s+j+1\right)  -1\right)
\label{singverma}%
\end{equation}
(\cite[Eq. (1.4)]{SV}), and Srivastava's formulas (11) and (15) in \cite[Ch.
3.2]{SC} (or see p. \pageref{formulas of Srivastava} below) to the context of
the harmonic zeta function.

In Section \ref{secStieltjes}, we deal with the harmonic Stieltjes constants
$\gamma_{H,-v}\left(  m,a,b\right)  $, $v\in\mathbb{N}\cup\left\{
-1,0\right\}  $, defined by\textbf{ }%
\begin{equation}
\zeta_{H}\left(  s,a,b\right)  =\frac{\omega_{1,-v}}{\left(  s+v\right)  ^{2}%
}+\frac{a_{-1,-v}\left(  a,b\right)  }{s+v}+\sum_{m=0}^{\infty}\left(
-1\right)  ^{m}\frac{\gamma_{H,-v}\left(  m,a,b\right)  }{m!}\left(
s+v\right)  ^{m},\label{laurentzetahsab}%
\end{equation}
where $\omega_{1,-v}=\left\{
\begin{array}
[c]{ll}%
1, & v=-1,\\
0, & v\not =-1,
\end{array}
\right.  $ and $a_{-1,-v}\left(  a,b\right)  $ stands for the residue of
$\zeta_{H}(s,\allowbreak a,\allowbreak b)$ at $s=-v$. Here, we fill the gap
identified above by deriving explicit expressions for $\gamma_{H,-v}\left(
m,a,b\right)  $, thereby encompassing all previously unknown cases, in terms
of derivatives of the Hurwitz and harmonic zeta functions. We also establish a
limit representation for the constants $\gamma_{H,1}\left(  m,a,b\right)  $.

In Section \ref{secDigamma}, motivated by the generating function identity for
the classical digamma function, we construct its parametric harmonic analogue,
the \emph{harmonic digamma function} $\psi_{H}\left(  a,b\right)  $, and
investigate its analytic properties. We establish difference equations,
derivative formulas, and a Taylor expansion showing that the values of the
parametric harmonic zeta function appear as its Taylor coefficients. In
particular, the harmonic digamma function is precisely the negative of the
first harmonic Stieltjes constan
\[
\psi_{H}\left(  a,b\right)  = -\gamma_{H,1}\left(  0,a,b\right)  ,
\]
paralleling the classical relationship between the digamma function and the
Stieltjes constants. The section concludes with a parametric harmonic analogue
of a classical summation formula, extending identities previously obtained for
the Riemann and Hurwitz zeta functions.

The final section of this paper is devoted to multiplication properties.
Specifically, we obtain multiplication formulas for the harmonic zeta function
and Stieltjes constants, which ultimately allow us to formulate a Raabe-type
relation for the harmonic digamma function.

\section{Proof of Theorem \ref{teo1}\label{secACon}}

For the proof, we utilize the following form of the Euler-Maclaurin summation
formula (see, e.g. \cite[p. 220]{SC}): Let $f$ be a function defined for
$t\geq x$ with continuous derivatives of order $2r+1$ and such that
$f^{\left(  k\right)  }\left(  t\right)  \rightarrow0$ as $t\rightarrow\infty
$, $\left(  k=0,1,\ldots,2r+1\right)  .$ Then%
\begin{align}
\sum\limits_{k=0}^{\infty}f\left(  x+k\right)   &  =\int\limits_{x}^{\infty
}f\left(  t\right)  dt+\frac{1}{2}f\left(  x\right)  -\sum\limits_{m=1}%
^{r}\frac{B_{2m}}{\left(  2m\right)  !}f^{\left(  2m-1\right)  }\left(
x\right) \nonumber\\
&  +\frac{1}{\left(  2r+1\right)  !}\int\limits_{0}^{\infty}\overline
{B}_{2r+1}\left(  t+x\right)  f^{\left(  2r+1\right)  }\left(  x+t\right)  dt.
\label{EM}%
\end{align}
Here $\overline{B}_{n}\left(  x\right)  =B_{n}\left(  x-\left\lfloor
x\right\rfloor \right)  $, $\left\lfloor x\right\rfloor $ is the greatest
integer not exceeding $x$, $B_{n}\left(  x\right)  $ is the $n$th Bernoulli
polynomial defined by%
\begin{equation}
\frac{te^{xt}}{e^{t}-1}=\sum_{n=0}^{\infty}\frac{B_{n}\left(  x\right)  }%
{n!}t^{n},\text{ }\left\vert t\right\vert <2\pi, \label{Bn}%
\end{equation}
with $B_{n}=B_{n}\left(  0\right)  $.

Let
\[
f\left(  t\right)  =\frac{1}{\left(  t+b\right)  ^{s}},\text{ }\left(
\operatorname{Re}(s)>1,b\in%
\mathbb{R}
^{+}\right)
\]
in (\ref{EM}). We then see that
\begin{align*}
\sum\limits_{k=0}^{\infty}\frac{1}{\left(  k+n+b\right)  ^{s}}  &
=\frac{\left(  n+b\right)  ^{1-s}}{s-1}+\frac{1}{2}\frac{1}{\left(
n+b\right)  ^{s}}+\sum\limits_{m=1}^{r}\frac{B_{2m}}{\left(  2m\right)
!}\frac{\left(  s\right)  _{2m-1}}{\left(  n+b\right)  ^{s+2m-1}}\\
&  -\frac{\left(  s\right)  _{2r+1}}{\left(  2r+1\right)  !}\int%
\limits_{0}^{\infty}\frac{\overline{B}_{2r+1}\left(  t+b\right)  }{\left(
n+b+t\right)  ^{s+2r+1}}dt.
\end{align*}
Hence,
\begin{align}
\zeta_{H}\left(  s,a,b\right)   &  =\frac{1}{s-1}\sum\limits_{n=0}^{\infty
}\frac{1}{\left(  n+a\right)  \left(  n+b\right)  ^{s-1}}+\frac{1}{2}%
\sum\limits_{n=0}^{\infty}\frac{1}{\left(  n+a\right)  \left(  n+b\right)
^{s}}\nonumber\\
&  +\sum\limits_{m=1}^{r}\frac{B_{2m}}{\left(  2m\right)  !}\left(  s\right)
_{2m-1}\sum\limits_{n=0}^{\infty}\frac{1}{\left(  n+a\right)  \left(
n+b\right)  ^{s+2m-1}}\nonumber\\
&  -\frac{\left(  s\right)  _{2r+1}}{\left(  2r+1\right)  !}\sum
\limits_{n=0}^{\infty}\frac{1}{n+a}\int\limits_{0}^{\infty}\frac{\overline
{B}_{2r+1}\left(  t+b\right)  }{\left(  n+b+t\right)  ^{s+2r+1}}dt. \label{**}%
\end{align}
Here, we use that
\begin{equation}
\sum\limits_{n=0}^{\infty}\frac{1}{\left(  n+a\right)  \left(  n+b\right)
^{s-1}}=\sum\limits_{k=0}^{\infty}\left(  b-a\right)  ^{k}\zeta\left(
s+k,b\right)  \label{*}%
\end{equation}
to obtain
\begin{align}
\zeta_{H}\left(  s,a,b\right)   &  =\frac{\zeta\left(  s,b\right)  }%
{s-1}+\frac{1}{s-1}\sum\limits_{k=1}^{\infty}\left(  b-a\right)  ^{k}%
\zeta\left(  s+k,b\right) \nonumber\\
&  +\frac{1}{2}\sum\limits_{j=0}^{\infty}\left(  b-a\right)  ^{j}\zeta\left(
s+j+1,b\right)  -\frac{\left(  s\right)  _{2r+1}}{\left(  2r+1\right)
!}\alpha\left(  s;a,b,r\right) \nonumber\\
&  +\sum\limits_{m=1}^{r}\frac{B_{2m}}{\left(  2m\right)  !}\left(  s\right)
_{2m-1}\sum\limits_{j=0}^{\infty}\left(  b-a\right)  ^{j}\zeta\left(
s+2m+j,b\right)  , \label{1a}%
\end{align}
where $\zeta\left(  s,b\right)  =\sum_{n=0}^{\infty}\left(  n+b\right)  ^{-s}$
is the Hurwitz zeta function and
\[
\alpha\left(  s;a,b,r\right)  =\sum\limits_{n=0}^{\infty}\int\limits_{0}%
^{\infty}\frac{\overline{B}_{2r+1}\left(  t+b\right)  }{\left(  n+a\right)
\left(  n+b+t\right)  ^{s+2r+1}}dt.
\]

The meromorphic continuation: The function $\overline{B}_{r}\left(  t\right)
$ is bounded on $%
\mathbb{R}
$, say $\left\vert \overline{B}_{2r+1}\left(  t\right)  \right\vert \leq M$.
Then
\[
\left\vert \int\limits_{0}^{\infty}\frac{\overline{B}_{2r+1}\left(
t+b\right)  }{\left(  n+b+t\right)  ^{s+2r+1}}dt\right\vert \leq\frac
{M}{\left(  \operatorname{Re}(s)+2r\right)  \left(  n+b\right)
^{\operatorname{Re}(s)+2r}},\text{\ }b>0,
\]
and hence
\[
\left\vert \alpha\left(  s;a,b,r\right)  \right\vert \leq\frac{M}%
{\operatorname{Re}(s)+2r}\sum\limits_{n=0}^{\infty}\frac{1}{\left(
n+a\right)  \left(  n+b\right)  ^{\operatorname{Re}(s)+2r}},\text{\ }a,b>0.
\]
Therefore $\alpha\left(  s;a,b,r\right)  $ is analytic in the half-plane
$\operatorname{Re}(s)>-2r$, and (\ref{1a}) provides the analytic continuation
of $\zeta_{H}\left(  s,a,b\right)  $ in the half-plane $\operatorname{Re}%
(s)>-2r$.

We observe from (\ref{1a}) that the singularities of $\zeta_{H}\left(
s,a,b\right)  $ arise from the poles of $\frac{\zeta\left(  s,b\right)  }%
{s-1}$, $\frac{\zeta\left(  s+k,b\right)  }{s-1}$, $\zeta\left(
s+j+1,b\right)  $ and $\zeta\left(  s+2m+j,b\right)  ,$ $k\geq1,$ $j\geq0,$
$1\leq m\leq r$. There is a second-order pole at $s=1$ arising from the term
$\zeta\left(  s,b\right)  /\left(  s-1\right)  ,$ and there are simple poles
at $s=-v,$ $v\in\mathbb{N}\cup\left\{  0\right\}  $. Consequently, the
function $\zeta_{H}\left(  s,a,b\right)  $ can be continued meromorphically to
the region $\mathbb{C}\backslash\left\{  v\in\mathbb{Z}:v\leq1\right\}  $.

To evaluate residues of $\zeta_{H}\left(  s,a,b\right)  $ at $s=-v\in
\mathbb{Z}$, $v\geq-1$, we make use of the Laurent series expansion of
$\zeta\left(  s,b\right)  $ in a neighborhood of $s=1:$%
\begin{equation}
\zeta\left(  s,b\right)  =\frac{1}{s-1}+\gamma_{0}\left(  b\right)
+\sum\limits_{n=1}^{\infty}\frac{\left(  -1\right)  ^{n}}{n!}\gamma_{n}\left(
b\right)  \left(  s-1\right)  ^{n}, \label{hzLaurent}%
\end{equation}
where the coefficients $\gamma_{n}\left(  b\right)  $ are called the
generalized Stieltjes constants.

By making the necessary calculations, we find that the residue of $\zeta
_{H}\left(  s,a,b\right)  $ at $s=1$ is%
\[
\gamma_{0}\left(  b\right)  +\sum\limits_{k=1}^{\infty}\left(  b-a\right)
^{k}\zeta\left(  k+1,b\right)  ,
\]
and the residues at $s=-v,$ $v\in\mathbb{N}\cup\left\{  0\right\}  $ are
\[
\left(  \frac{a-b}{v+1}+\frac{1}{2}\right)  \left(  b-a\right)  ^{v}%
-\sum\limits_{m=1}^{r}\frac{B_{2m}}{\left(  2m\right)  !}\left\langle
v\right\rangle _{2m-1}\left(  b-a\right)  ^{v-2m+1}.
\]
The equality%
\[
\gamma_{0}\left(  b\right)  +\sum\limits_{k=1}^{\infty}\left(  b-a\right)
^{k}\zeta\left(  k+1,b\right)  =\gamma_{0}\left(  a\right)
\]
follows from (\ref{*}). Thus, we complete the proof.

\section{Series involving $\zeta_{H}\left(  s,a,b\right)  $\label{secHZeta}}

The second result of the study is of the study is the Taylor series expansion
of $\zeta_{H}\left(  s,a,b+t\right)  $, which serves as a generating function
and a foundational tool for subsequent investigations.

\begin{theorem}
\label{hztaylor} For all values of $s\in\mathbb{C}\backslash\left\{
m\in\mathbb{Z}:m\leq1\right\}  $ we have
\begin{equation}
\zeta_{H}\left(  s,a,b+t\right)  =\sum\limits_{k=0}^{\infty}\frac{\left(
-1\right)  ^{k}\left(  s\right)  _{k}}{k!}\zeta_{H}\left(  s+k,a,b\right)
t^{k},\text{ }\left\vert t\right\vert \leq\left\vert b\right\vert ,\text{
}t\not =-b. \label{04}%
\end{equation}

\end{theorem}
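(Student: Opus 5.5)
We prove the identity first in the half-plane $\operatorname{Re}(s)>1$, where $\zeta_{H}(s,a,b)$ is given by its defining series, and then extend it to all admissible $s$ by analytic continuation via Theorem \ref{teo1}. For $\operatorname{Re}(s)>1$ and each $n\geq0$ we use the binomial series
\[
\frac{1}{\left(  n+b+t\right)  ^{s}}=\sum_{k=0}^{\infty}\frac{\left(  -1\right)  ^{k}\left(  s\right)  _{k}}{k!}\frac{t^{k}}{\left(  n+b\right)  ^{s+k}},
\]
valid for $\left\vert t\right\vert <\left\vert n+b\right\vert $. With $b>0$ (the case used in Section \ref{secACon}) and $\left\vert t\right\vert <b$, this holds for every $n\geq0$. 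We multiply by $H_{n}(a)$ and sum over $n$. Interchanging the order of summation then gives (\ref{04}) formally, because $\sum_{n}H_{n}(a)(n+b)^{-s-k}=\zeta_{H}(s+k,a,b)$.

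The interchange has to be justified by absolute convergence of the double series
\[
\sum_{n,k}\left\vert H_{n}(a)\right\vert \frac{\left\vert (s)_{k}\right\vert }{k!}\frac{\left\vert t\right\vert ^{k}}{(n+b)^{\sigma+k}},\qquad \sigma=\operatorname{Re}(s).
\]
Since $\left\vert (s)_{k}\right\vert \leq(\left\vert s\right\vert )_{k}$, the inner sum over $k$ is bounded by $(n+b)^{-\sigma}\bigl(1-\left\vert t\right\vert /(n+b)\bigr)^{-\left\vert s\right\vert }$. This bound is $O\bigl((n+b)^{-\sigma}\bigr)$ uniformly in $n$ once $\left\vert t\right\vert <b$. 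Because $H_{n}(a)=O(\log n)$, the resulting series over $n$ converges for $\sigma>1$. This establishes (\ref{04}) for $\operatorname{Re}(s)>1$ and $\left\vert t\right\vert <b$. The boundary case $\left\vert t\right\vert =\left\vert b\right\vert$, $t\neq-b$, follows in the same way: only the $n=0$ term has $\left\vert t\right\vert =n+b$, and for that term the binomial series converges on the circle except at $t=-b$, provided $\operatorname{Re}(s)$ is suitable. Alternatively, the boundary case follows from Abel's theorem wherever the right-hand side converges.

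Next we extend to all $s\in\mathbb{C}\backslash\{m\in\mathbb{Z}:m\leq1\}$. For fixed $t$, the left-hand side is meromorphic in $s$ by Theorem \ref{teo1}, since $b+t$ is again an admissible parameter. It therefore suffices to show that the right-hand side defines an analytic function of $s$ on compact subsets avoiding the poles; the identity theorem then completes the argument. To do this we split the sum at some $K$ with $K>2-\operatorname{Re}(s)$ on the compact set. The finitely many terms with $k<K$ are meromorphic by Theorem \ref{teo1}, and each of them has poles only where $s+k\in\{m\leq1\}$, which is contained in the excluded set. For $k\geq K$ we have $\operatorname{Re}(s+k)>2$, so the defining series applies, and
\[
\left\vert \zeta_{H}(s+k,a,b)\right\vert \leq C\,b^{-\operatorname{Re}(s)-k}
\]
with $C$ uniform in $k$, because the $n=0$ term dominates and the tail is at most geometrically comparable. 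Combined with $\left\vert (s)_{k}\right\vert /k!=O(k^{\left\vert s\right\vert })$, this gives locally uniform convergence of the tail for $\left\vert t\right\vert <b$. The tail is thus analytic in $s$, and the identity follows on the full domain.

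The main obstacle is precisely this uniform tail estimate $\zeta_{H}(s+k,a,b)=O(b^{-k})$ together with the delicate boundary $\left\vert t\right\vert =\left\vert b\right\vert$. I would bound $\left\vert \zeta_{H}(s+k,a,b)-H_{0}(a)b^{-s-k}\right\vert$ by $\bigl(b/(1+b)\bigr)^{k}b^{-\operatorname{Re}(s)-k}$ times a convergent sum. On the circle, the terms behave like $\frac{(s)_{k}}{k!}(-t/b)^{k}$, so convergence follows from the binomial series on its circle of convergence when $\operatorname{Re}(s)<0$. In other cases I would restrict the claim to $\left\vert t\right\vert <\left\vert b\right\vert$, or interpret it via Abel summation.
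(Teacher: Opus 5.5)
Your proof is correct for $b>0$ and $|t|<b$, and it is the paper's argument: binomial expansion of the defining series for $\operatorname{Re}(s)>1$, then analytic continuation, with the interchange of summations and the locally uniform tail bound $|\zeta_{H}(s+k,a,b)|\leq C\,b^{-\operatorname{Re}(s)-k}$ (both left implicit in the paper) written out. Your caution about the boundary is justified, but drop the first-paragraph claim that it ``follows in the same way'': the $k$th term equals $a^{-1}b^{-s}\binom{-s}{k}(t/b)^{k}$ plus a summable error and $|\binom{-s}{k}|\sim k^{\operatorname{Re}(s)-1}/|\Gamma(s)|$, so the series diverges on $|t|=b$ whenever $\operatorname{Re}(s)\geq1$, and (\ref{04}) on the circle holds (by Abel's theorem) only for $\operatorname{Re}(s)<1$ --- a defect in the stated range $|t|\leq|b|$ that the paper's two-line proof does not address.
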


\begin{proof}
For $\operatorname{Re}\left(  s\right)  >1$, (\ref{04}) follows from
(\ref{hhz}) and the binomial expansion. By analytic continuation, (\ref{04})
holds true for all values of $s\in\mathbb{C}\backslash\left\{  k\in
\mathbb{Z}:k\leq1\right\}  .$
\end{proof}

Following Srivastava \cite{Sr}, we order some consequences of (\ref{04}):

\begin{proposition}
Under the assumption of Theorem \ref{hztaylor}, we have
\begin{align}
\zeta_{H}\left(  s,a,b\right)   &  =\frac{1}{s-1}\sum\limits_{k=0}^{\infty
}\left(  b-a\right)  ^{k}\zeta\left(  s+k,b\right)  +\frac{1}{2}%
\sum\limits_{k=0}^{\infty}\left(  b-a\right)  ^{k}\zeta\left(  s+k+1,b\right)
\nonumber\\
&  +\sum\limits_{k=1}^{\infty}\frac{\left(  -1\right)  ^{k-1}\left(  s\right)
_{k+1}}{\left(  k+2\right)  !}\frac{k}{2}\zeta_{H}\left(  s+k+1,a,b\right)  .
\label{6a}%
\end{align}
provided that the series converges.
\end{proposition}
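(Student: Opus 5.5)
We will mimic Singh–Verma's derivation of (\ref{singverma}) from the Taylor expansion: integrate the generating function (\ref{04}) in $t$ over $[0,1]$, and also combine it with its first moment. First we would integrate (\ref{04}) with $b$ replaced by suitable values. The key observation is the telescoping identity
\[
\zeta_{H}\left(  s,a,b\right)  -\zeta_{H}\left(  s,a,b+1\right)  =\sum_{n=0}^{\infty}\frac{H_{n}\left(  a\right)  -H_{n-1}\left(  a\right)  }{\left(  n+b\right)  ^{s}}=\sum_{n=0}^{\infty}\frac{1}{\left(  n+a\right)  \left(  n+b\right)  ^{s}},
\]
(with $H_{-1}(a)=0$). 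By (\ref{*}) the right side equals $\sum_{k\geq0}(b-a)^{k}\zeta(s+k+1,b)$. Denote $F(s):=\sum_{k\ge0}(b-a)^k\zeta(s+k,b)$, so $\zeta_H(s,a,b)-\zeta_H(s,a,b+1)=F(s+1)$. These are exactly the two non-$\zeta_H$ sums in (\ref{6a}).

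Next we would integrate (\ref{04}) termwise over $t\in[0,1]$ and, separately, multiply by $t$ and integrate over $[0,1]$. Here the plan is to view $\int_0^1 \zeta_H(s,a,b+t)\,dt$ as being computed by an antiderivative: since $\frac{\partial}{\partial t}\zeta_{H}(s-1,a,b+t)=-(s-1)\zeta_{H}(s,a,b+t)$, we get
\[
\int_{0}^{1}\zeta_{H}\left(  s,a,b+t\right)  dt=\frac{F(s)}{s-1},
\qquad
\int_{0}^{1}t\,\zeta_{H}\left(  s,a,b+t\right)  dt=\frac{-\zeta_H(s-1,a,b+1)+\int_0^1\zeta_H(s-1,a,b+t)dt}{s-1},
\]
and the last integral is $F(s-1)/(s-2)$. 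On the series side, $\int_0^1 t^k dt=1/(k+1)$ and $\int_0^1 t^{k+1}dt=1/(k+2)$, giving series in $\frac{(-1)^k(s)_k}{(k+1)!}\zeta_H(s+k,a,b)$ and $\frac{(-1)^k(s)_k}{k!(k+2)}\zeta_H(s+k,a,b)$. We then take the combination $2\int_0^1 t\,(\cdot)\,dt-\int_0^1(\cdot)\,dt$ (weight $2t-1$ kills the $k=0$ term's leading contribution and produces the coefficient $\frac{k}{(k+2)!}\cdot\frac{\cdot}{2}$ after reindexing $k\to k+1$), exactly as in Srivastava's derivation of (\ref{singverma}). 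Indeed $\frac{2}{k!(k+2)}-\frac{1}{(k+1)!}=\frac{k}{(k+2)!}$, which matches the coefficient in (\ref{6a}) after shifting, and the $k=0$ term vanishes. Shifting $s\mapsto s-1$ appropriately and using the identity $\zeta_H(s-1,a,b+1)=\zeta_H(s-1,a,b)-F(s)$ to eliminate the $b+1$ term, we solve for $\zeta_H(s,a,b)$ and should arrive at (\ref{6a}), the $F(s)/(s-1)$ and $\frac12F(s+1)$ pieces appearing from the boundary terms.

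The main obstacle is the bookkeeping: choosing which shift ($s$ or $s-1$) to apply the $t$-moment to so that the boundary term $\zeta_H(\cdot,a,b+1)$ and the $1/(s-2)$ factor combine cleanly into $\frac{F(s)}{s-1}+\frac12F(s+1)$; if the direct combination leaves residual $F(s-1)$ terms, I would instead start from (\ref{04}) at $t=-1$ type relations (i.e. expand $\zeta_H(s,a,b)$ around $b+1$) and redo the moment computation. Termwise integration is justified for $\operatorname{Re}s>1$ by uniform convergence on $0\le t\le1\le|b|$ (assume $b\ge1$ first), then extended by analytic continuation in $s$ and $b$, under the stated proviso that the series converges.
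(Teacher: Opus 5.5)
Your proposal is correct and is essentially the paper's proof in integrated form. After replacing $s$ by $s+1$, your identity for $\int_0^1(2t-1)\,\zeta_H(s+1,a,b+t)\,dt$ is precisely the paper's identity (\ref{4a}) at $t=1$ divided by $-s$: the paper reaches the same combination by combining the expansions (\ref{04}) of $\zeta_H(s,a,b+t)$ and $\zeta_H(s-1,a,b+t)$ and setting $t=1$ instead of integrating, and both routes then close with (\ref{*2}).

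The bookkeeping you were unsure about closes with nothing left over. Eliminating $\zeta_H(s-1,a,b+1)$ gives $\zeta_H(s-1,a,b)=\frac{1}{2}F(s)+\frac{F(s-1)}{s-2}-\frac{s-1}{2}\sum_{k\geq1}\frac{(-1)^{k}k\,(s)_{k}}{(k+2)!}\zeta_H(s+k,a,b)$, and replacing $s$ by $s+1$ (with $s\,(s+1)_{k}=(s)_{k+1}$) yields (\ref{6a}) verbatim, so no fallback is needed.
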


\begin{proof}
After some manipulations on the summation index in (\ref{04}), we
differentiate both sides of the resulting equation with respect to $t$ and
find that%
\begin{align}
&  \sum\limits_{k=1}^{\infty}\frac{\left(  -1\right)  ^{k-1}\left(  s\right)
_{k+1}}{\left(  k+2\right)  !}k\zeta_{H}\left(  s+k+1,a,b\right)
t^{k-1}\nonumber\\
&  =\left\{  \zeta_{H}\left(  s,a,b+t\right)  +\zeta_{H}\left(  s,a,b\right)
\right\}  t^{-2}\nonumber\\
&  +\frac{2}{s-1}\left\{  \zeta_{H}\left(  s-1,a,b+t\right)  -\zeta_{H}\left(
s-1,a,b\right)  \right\}  t^{-3},\text{ }0<\left\vert t\right\vert <\left\vert
b\right\vert , \label{4a}%
\end{align}
upon the use of $\frac{\partial}{\partial t}\left\{  \zeta_{H}\left(
s,a,b+t\right)  \right\}  =-s\zeta_{H}\left(  s+1,a,b+t\right)  $. Setting
$t=1$ in (\ref{4a}) and using that
\begin{equation}
\zeta_{H}\left(  s,a,b+1\right)  =\zeta_{H}\left(  s,a,b\right)
-\sum\limits_{k=0}^{\infty}\left(  b-a\right)  ^{k}\zeta\left(
s+k+1,b\right)  \label{*2}%
\end{equation}
yield (\ref{6a}).
\end{proof}

Similarly, we can deduce the following:

\begin{proposition}
We have%
\begin{equation}
\zeta_{H}\left(  s,a,b\right)  =\frac{1}{s-1}\sum\limits_{k=0}^{\infty}\left(
b-a\right)  ^{k}\zeta\left(  s+k,b\right)  -\sum\limits_{k=1}^{\infty}%
\frac{\left(  -1\right)  ^{k}\left(  s\right)  _{k}}{\left(  k+1\right)
!}\zeta_{H}\left(  s+k,a,b\right)  , \label{7a}%
\end{equation}
provided that the series converges.
\end{proposition}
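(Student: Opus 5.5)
The plan is to imitate the derivation of Landau's formula (\ref{Landau}) from the Taylor expansion (\ref{04}): integrate (\ref{04}) in $t$ over $[0,1]$ instead of differentiating as in the proof of (\ref{6a}). Before doing so, replace $s$ by $s-1$ in (\ref{04}):
\[
\zeta_{H}\left(  s-1,a,b+t\right)  =\sum_{k=0}^{\infty}\frac{\left(  -1\right)^{k}\left(  s-1\right)  _{k}}{k!}\zeta_{H}\left(  s-1+k,a,b\right)  t^{k}.
\]
Separate the $k=0$ term and shift the index $k\mapsto k+1$ in the rest. Since $\left(  s-1\right)  _{k+1}=\left(  s-1\right)  \left(  s\right)  _{k}$, this gives
\[
\zeta_{H}\left(  s-1,a,b+t\right)  -\zeta_{H}\left(  s-1,a,b\right)  =-\left(  s-1\right)  \sum_{k=0}^{\infty}\frac{\left(  -1\right)^{k}\left(  s\right)  _{k}}{\left(  k+1\right)  !}\zeta_{H}\left(  s+k,a,b\right)  t^{k+1}.
\]
One can read this identity either as the integral of (\ref{04}) from $0$ to $t$, using $\partial_{t}\zeta_{H}(s-1,a,b+t)=-(s-1)\zeta_{H}(s,a,b+t)$, or directly as a reindexing. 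Either way, no interchange of integral and sum is needed beyond the power-series setting.

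Next, set $t=1$. This is legitimate on the boundary $|t|\le|b|$ when $b\ge1$. In general it is exactly what the proviso ``provided that the series converges'' covers, via Abel's theorem or analytic continuation in $b$. The $k=0$ term of the right side is $-(s-1)\zeta_{H}(s,a,b)$. For the left side, apply (\ref{*2}) with $s-1$ in place of $s$:
\[
\zeta_{H}\left(  s-1,a,b+1\right)  -\zeta_{H}\left(  s-1,a,b\right)  =-\sum_{k=0}^{\infty}\left(  b-a\right)  ^{k}\zeta\left(  s+k,b\right).
\]
Equating the two sides and dividing by $-(s-1)$ gives
\[
\frac{1}{s-1}\sum_{k}\left(  b-a\right)  ^{k}\zeta\left(  s+k,b\right)  =\zeta_{H}\left(  s,a,b\right)  +\sum_{k\ge1}\frac{\left(  -1\right)^{k}\left(  s\right)  _{k}}{\left(  k+1\right)  !}\zeta_{H}\left(  s+k,a,b\right),
\]
which is (\ref{7a}).

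The main obstacle is justifying the evaluation at $t=1$. Two issues arise there.
\begin{itemize}
\item The radius $|t|\le|b|$ in (\ref{04}) may exclude $t=1$. I would first prove the identity for $b>1$ and $\operatorname{Re}(s)>1$, where everything converges absolutely: $\zeta_{H}(s+k,a,b)\ll b^{-k}$-type decay controls the sum. I would then extend to all admissible $s$ by analytic continuation, using Theorem \ref{teo1}.
\item Identity (\ref{*2}) must be valid at $s-1$. This follows from $H_{n+1}(a)-H_{n}(a)=1/(n+1+a)$ and the expansion (\ref{*}), first for $\operatorname{Re}(s)>2$ and then by continuation.
\end{itemize}
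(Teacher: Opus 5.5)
Your argument is correct and follows essentially the paper's own route. The paper obtains (\ref{7a}) ``similarly'' to (\ref{6a}): it reindexes the Taylor expansion (\ref{04}), here at $s-1$ using $(s-1)_{k+1}=(s-1)(s)_k$, sets $t=1$, and applies the shift identity (\ref{*2}). Your extra care over the evaluation at $t=1$ (treating $b>1$ first, continuing in $s$, and using Abel's theorem on the boundary) adds justification the paper leaves implicit.
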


In particular, for $a=b$, (\ref{6a}) reduces to (\ref{6b}) and (\ref{7a}) to
(\ref{10}) below;

\begin{corollary}
We have%
\begin{equation}
\zeta_{H}\left(  s,a\right)  =\frac{\zeta\left(  s,a\right)  }{s-1}+\frac
{1}{2}\zeta\left(  s+1,a\right)  -\sum\limits_{k=1}^{\infty}\left(  -1\right)
^{k}\frac{\left(  s\right)  _{k+1}}{\left(  k+2\right)  !}\frac{k}{2}\zeta
_{H}\left(  s+k+1,a\right)  \label{6b}%
\end{equation}
and%
\begin{equation}
\zeta_{H}\left(  s,a\right)  =\frac{\zeta\left(  s,a\right)  }{s-1}%
-\sum\limits_{k=1}^{\infty}\left(  -1\right)  ^{k}\frac{\left(  s\right)
_{k}}{\left(  k+1\right)  !}\zeta_{H}\left(  s+k,a\right)  , \label{10}%
\end{equation}
provided that the series converges.
\end{corollary}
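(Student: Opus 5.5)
The plan is to specialize the two preceding propositions to $a=b$. Nothing new needs to be proved; the only work is to check that every sum involving powers of $b-a$ collapses to its $k=0$ term, and that $\zeta_{H}\left(s,a,a\right)=\zeta_{H}\left(s,a\right)$ by (\ref{hhz}).

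For (\ref{6b}) I will put $b=a$ in (\ref{6a}). With the convention $0^{0}=1$ we have $\left(b-a\right)^{k}=0$ for every $k\geq1$, so
\[
\frac{1}{s-1}\sum_{k=0}^{\infty}\left(b-a\right)^{k}\zeta\left(s+k,b\right)=\frac{\zeta\left(s,a\right)}{s-1}
\quad\text{and}\quad
\frac{1}{2}\sum_{k=0}^{\infty}\left(b-a\right)^{k}\zeta\left(s+k+1,b\right)=\frac{1}{2}\zeta\left(s+1,a\right).
\]
In the last series of (\ref{6a}) I rewrite $\left(-1\right)^{k-1}$ as $-\left(-1\right)^{k}$ and replace $\zeta_{H}\left(s+k+1,a,a\right)$ by $\zeta_{H}\left(s+k+1,a\right)$. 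The result is exactly (\ref{6b}).

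For (\ref{10}) I proceed the same way with (\ref{7a}). Only the $k=0$ term $\zeta\left(s,a\right)/\left(s-1\right)$ survives in the first sum, and the second sum becomes the stated series in $\zeta_{H}\left(s+k,a\right)$.

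Proposition (\ref{7a}) was stated in the paper without proof ("similarly"), so if it has to be justified, I would argue as follows. Set $t=1$ in the Taylor expansion (\ref{04}), written with $\zeta_{H}\left(s-1,\cdot\right)$ and then reindexed. This gives
\[
\zeta_{H}\left(s-1,a,b+1\right)-\zeta_{H}\left(s-1,a,b\right)=\sum_{k\geq1}\frac{\left(-1\right)^{k}\left(s-1\right)_{k}}{k!}\zeta_{H}\left(s-1+k,a,b\right).
\]
By (\ref{*2}) the left side equals $-\sum_{k\ge 0}\left(b-a\right)^{k}\zeta\left(s+k,b\right)$. Next I shift the index $k\to k+1$, use $\left(s-1\right)_{k+1}=\left(s-1\right)\left(s\right)_{k}$, and divide by $s-1$; this yields (\ref{7a}).

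The main obstacle is convergence. Using $t=1$ at the boundary of, or beyond, the disc $\left\vert t\right\vert \le\left\vert b\right\vert$ is exactly why the statement carries the proviso "provided that the series converges". I will inherit that hypothesis rather than prove it.
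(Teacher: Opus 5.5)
Your proposal is correct and follows the paper's own argument: the paper obtains (\ref{6b}) and (\ref{10}) by setting $a=b$ in (\ref{6a}) and (\ref{7a}). With $a=b$, each series in powers of $b-a$ collapses to its $k=0$ term, and $\zeta_{H}(s,a,a)=\zeta_{H}(s,a)$. Your added derivation of (\ref{7a}) is also correct. You take (\ref{04}) with $s-1$ in place of $s$ at $t=1$, apply (\ref{*2}), reindex, use $(s-1)_{k+1}=(s-1)(s)_{k}$ and divide by $s-1$; this is a faithful reading of the paper's ``similarly'', and like the paper it relies on the convergence proviso when $t=1$ lies outside the disc where (\ref{04}) is guaranteed.
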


It is worth noting that (\ref{6b}) and (\ref{10}) (and hence also (\ref{6a})
and (\ref{7a})) are generalizations of Srivastava's formulas%
\[
\zeta\left(  s,a\right)  =a^{-s}\left(  \frac{a}{s-1}+\frac{1}{2}\right)
+\sum\limits_{k=1}^{\infty}\left(  -1\right)  ^{k+1}\frac{\left(  s\right)
_{k+1}}{\left(  k+2\right)  !}\frac{k}{2}\zeta\left(  s+k+1,a\right)
\]
(cf. \cite[Ch. 3.2, Eq. (11)]{SC}) and\label{formulas of Srivastava}
\[
\zeta\left(  s,a\right)  =\frac{a^{1-s}}{s-1}-\sum\limits_{k=1}^{\infty
}\left(  -1\right)  ^{k}\frac{\left(  s\right)  _{k}}{\left(  k+1\right)
!}\zeta\left(  s+k,a\right)
\]
(cf. \cite[Ch. 3.2, Eq. (15)]{SC}), respectively.

The next result is the Landau's formula for the harmonic zeta function, which
follows from (\ref{10}) and (\ref{Landau}).

\begin{corollary}
We have
\begin{equation}
\zeta_{H}\left(  s\right)  =1+\frac{\zeta\left(  s\right)  }{s-1}-\sum
_{k=1}^{\infty}\left(  -1\right)  ^{k}\frac{\left(  s\right)  _{k}}{\left(
k+1\right)  !}\left\{  \zeta_{H}\left(  s+k\right)  -1\right\}  . \label{9a}%
\end{equation}

\end{corollary}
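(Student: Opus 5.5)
The plan is to specialize the parametric identity (\ref{7a}) to parameters for which the harmonic zeta values appearing on the right are exactly the shifted quantities $\zeta_{H}(s+k)-1$. Then I would identify the remaining ``Hurwitz part'' using Landau's formula (\ref{Landau}).

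The first observation is that $H_{n}(1)=\sum_{k=0}^{n}\frac{1}{k+1}=H_{n+1}$. Hence
\[
\zeta_{H}(s,1,1)=\sum_{n=0}^{\infty}\frac{H_{n+1}}{(n+1)^{s}}=\zeta_{H}(s),\qquad \zeta_{H}(s,1,2)=\sum_{n=0}^{\infty}\frac{H_{n+1}}{(n+2)^{s}}=\zeta_{H}(s)-1 ,
\]
where the second identity follows by shifting the index and removing the term $H_{1}/1^{s}=1$.

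There are two natural routes. The first applies (\ref{7a}) with $a=1$, $b=2$. This gives
\[
\zeta_{H}(s)-1=\frac{1}{s-1}\sum_{k=0}^{\infty}\zeta(s+k,2)-\sum_{k=1}^{\infty}\frac{(-1)^{k}(s)_{k}}{(k+1)!}\left\{\zeta_{H}(s+k)-1\right\},
\]
and the last sum is already the one in (\ref{9a}). Its convergence is unproblematic, because $\zeta_{H}(s+k)-1=O(k\,2^{-k})$ while $(s)_{k}/(k+1)!$ grows at most polynomially. It then remains to evaluate $\sum_{k\ge0}\zeta(s+k,2)$ via (\ref{*}) with $a=1$, $b=2$. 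I would compute this sum, for $\operatorname{Re}(s)>1$ first, as
\[
\sum_{n\ge0}\frac{1}{(n+1)(n+2)^{s-1}},
\]
and then compare it with $\zeta(s)$ using Landau's formula (\ref{Landau}), whose tail $\sum_{j}(-1)^{j}\frac{(s)_{j}}{(j+1)!}\{\zeta(s+j)-1\}$ has exactly the same combinatorial weights. After that, analytic continuation in $s$ extends the identity to the claimed domain.

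The second route takes (\ref{10}) with $a=1$ and subtracts Landau's formula (\ref{Landau}) termwise. This is presumably what the text intends by ``follows from (\ref{10}) and (\ref{Landau})''. With this route one must check that the constant $1$ and the terms $\zeta(s+k)$ recombine correctly, and one must justify splitting the series: the individual series $\sum(-1)^{k}(s)_{k}/(k+1)!$ do not converge for $\operatorname{Re}(s)\ge1$, so the manipulation has to be carried out on convergent grouped series or on the range $\operatorname{Re}(s)<1$, and then continued.

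The main obstacle is this bookkeeping of the non-tail terms, namely showing that the $\frac{1}{s-1}(\cdots)$ piece together with the leftover constants equals exactly $1+\zeta(s)/(s-1)$. A quick check of the first route shows that
\[
\sum_{k\ge0}\zeta(s+k,2)=\zeta(s)-1+\sum_{m\ge1}\frac{(m+1)^{-s}}{m},
\]
so an extra Euler-type sum appears. I would therefore first verify (\ref{9a}) numerically, for instance at $s=2$ using $\zeta_{H}(2)=2\zeta(3)$ from (\ref{ES}), before committing to either route. If the verification fails, the statement most likely needs this extra term, or a correction to the constant, added to the right-hand side.
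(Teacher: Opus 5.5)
Your hesitation is justified: (\ref{9a}) is false as printed, so no proof of it exists. The numerical test you propose at $s=2$ already settles this. There $(2)_k=(k+1)!$, and
\[
\sum_{k\ge1}(-1)^k\left\{\zeta_H(k+2)-1\right\}=-\sum_{n\ge2}\frac{H_n}{n^2(n+1)}=-\Bigl(2\zeta(3)-\zeta(2)-\tfrac12\Bigr).
\]
This uses $\frac{1}{n^2(n+1)}=\frac1{n^2}-\frac1n+\frac1{n+1}$, $\sum_{n\ge1}H_n/n^2=2\zeta(3)$ and $\sum_{n\ge1}H_n/(n(n+1))=\zeta(2)$. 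Hence the right side of (\ref{9a}) equals $2\zeta(3)+\frac12$, while $\zeta_H(2)=2\zeta(3)$.

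Your second route is exactly the paper's argument: (\ref{10}) at $a=1$, where $\zeta_H(s,1)=\zeta_H(s)$, combined with (\ref{Landau}). The bookkeeping you flag is precisely where it fails. Put $c_k=(-1)^k\frac{(s)_k}{(k+1)!}$. The right sides of (\ref{9a}) and of (\ref{10}) at $a=1$ differ by $1+\sum_{k\ge1}c_k$, so (\ref{9a}) amounts to $\sum_{k\ge1}c_k=-1$. The printed (\ref{Landau}), compared with the Srivastava formula displayed after (\ref{10}) at $a=1$, encodes the same evaluation.

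But integrating $(1+t)^{-s}=\sum_{k\ge0}(-1)^k\frac{(s)_k}{k!}t^k$ over $[0,1]$ gives $\sum_{k\ge1}c_k=\frac{1-2^{1-s}}{s-1}-1$. This uses Abel's theorem; the series converges for $\operatorname{Re}(s)<2$, not only for $\operatorname{Re}(s)<1$ as you claim. So (\ref{Landau}) is itself misprinted. Landau's formula has non-alternating weights, and the alternating version carries $\frac{2^{1-s}}{s-1}$ in place of $\frac1{s-1}$. The correct harmonic analogue is
\[
\zeta_H(s)=1+\frac{\zeta(s)-1+2^{1-s}}{s-1}-\sum_{k=1}^{\infty}(-1)^k\frac{(s)_k}{(k+1)!}\left\{\zeta_H(s+k)-1\right\}.
\]
The cleanest proof avoids boundary convergence altogether:
\begin{enumerate}
\item expand $\sum_{n\ge2}H_n(n+t)^{-s}$ in powers of $t$ (the radius is $2$);
\item integrate over $[0,1]$;
\item note that for $\operatorname{Re}(s)>2$ the integral telescopes to $\frac{1}{s-1}\bigl(\frac32\,2^{1-s}+\sum_{m\ge3}m^{-s}\bigr)$;
\item continue analytically.
\end{enumerate}
The error in (\ref{9a}) is therefore $\frac{1-2^{1-s}}{s-1}$, which is neither an Euler-type sum nor a constant.

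Your first route contains a slip of its own. Since $H_n(1)=H_{n+1}$, putting $m=n+2$ gives $\zeta_H(s,1,2)=\sum_{m\ge2}H_{m-1}m^{-s}=\zeta_H(s)-\zeta(s+1)$, not $\zeta_H(s)-1$: you shifted the denominator but not the harmonic number. Thus (\ref{7a}) with $a=1$, $b=2$ yields a valid identity, but its tail involves $\zeta_H(s+k)-\zeta(s+k+1)$, not the tail of (\ref{9a}). The extra sum $\sum_{m\ge1}m^{-1}(m+1)^{-s}$ you found comes from this misidentification, not from a missing term in (\ref{9a}).
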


A further analogous formula to that of Singh and Verma (\ref{singverma}) is
given below.

\begin{corollary}
We have
\begin{align}
\zeta_{H}\left(  s\right)   &  =\frac{1}{2}+\frac{\zeta\left(  s\right)
-1+\left(  s+3\right)  2^{-s-1}}{s-1}+\frac{\zeta\left(  s+1\right)  }%
{2}\nonumber\\
&  +\sum\limits_{k=1}^{\infty}\left(  -1\right)  ^{k+1}\frac{\left(  s\right)
_{k+1}}{\left(  k+2\right)  !}\frac{k}{2}\left\{  \zeta_{H}\left(
s+k+1\right)  -1\right\}  . \label{9b}%
\end{align}

\end{corollary}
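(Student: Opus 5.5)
The plan is to specialize (\ref{6b}) to $a=1$ and then split off a constant from each $\zeta_H$-value, as is done in the passage from (\ref{10}) to (\ref{9a}). Since $H_n(1)=H_{n+1}$, we have
\[
\zeta_H(s,1)=\sum_{n=0}^{\infty}\frac{H_{n+1}}{(n+1)^s}=\zeta_H(s),\qquad \zeta(s,1)=\zeta(s).
\]
So (\ref{6b}) with $a=1$ reads
\[
\zeta_H(s)=\frac{\zeta(s)}{s-1}+\frac{\zeta(s+1)}{2}+\sum_{k=1}^{\infty}(-1)^{k+1}\frac{(s)_{k+1}}{(k+2)!}\frac{k}{2}\,\zeta_H(s+k+1).
\]
Next I would write $\zeta_H(s+k+1)=\{\zeta_H(s+k+1)-1\}+1$. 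The series then splits into the series in (\ref{9b}) plus the constant series
\[
S(s)=\sum_{k=1}^{\infty}(-1)^{k+1}\frac{(s)_{k+1}}{(k+2)!}\frac{k}{2}.
\]
It remains to show that $S(s)=\frac12+\frac{-1+(s+3)2^{-s-1}}{s-1}$. Combined with $\zeta(s)/(s-1)$, this gives exactly $\frac12+\frac{\zeta(s)-1+(s+3)2^{-s-1}}{s-1}$, which is the statement.

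To evaluate $S(s)$, I would use Srivastava's formula (cf. \cite[Ch. 3.2, Eq. (11)]{SC}), quoted just before the statement. I would apply it at $a=1$ and at $a=2$ and subtract. Because $\zeta(w,1)-\zeta(w,2)=1$ for every $w$, each coefficient in the difference of the two series becomes $1$, so the difference of the series is exactly $S(s)$. On the left-hand side the difference is $\zeta(s,1)-\zeta(s,2)=1$. The explicit terms contribute $\frac{1}{s-1}+\frac12-2^{-s}\bigl(\frac{2}{s-1}+\frac12\bigr)$. Solving for $S(s)$ gives
\[
S(s)=\frac12-\frac{1}{s-1}+2^{-s}\Bigl(\frac{2}{s-1}+\frac12\Bigr)=\frac12+\frac{-1+2^{-s-1}(4+s-1)}{s-1},
\]
which is the required value. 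Alternatively, subtracting (\ref{singverma}) from the $a=1$ case of Srivastava's formula gives the same result. This is more in the spirit of the corollary, since it also derives (\ref{9b}) from (\ref{singverma}).

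The main obstacle I expect is convergence, because splitting one series into two is legitimate only if both pieces converge. The terms $\frac{(s)_{k+1}}{(k+2)!}k$ behave like a constant times $k^{s-1}$. Hence $S(s)$ converges only in a restricted range of $s$, for instance $\operatorname{Re}(s)<0$. By contrast, $\zeta_H(s+k+1)-1=O(2^{-k}\log k)$ for real parts $\to\infty$, so the series in (\ref{9b}) converges for all admissible $s$. I would therefore first establish the identity in the region where all series converge absolutely or conditionally, under the standing proviso of (\ref{6b}). I would then extend it to all $s\in\mathbb{C}\setminus\{m\in\mathbb{Z}:m\le1\}$ by analytic continuation. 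The continuation uses the fact that the right-hand side of (\ref{9b}) is an analytic function of $s$ there, since its series converges locally uniformly by the geometric decay just noted.
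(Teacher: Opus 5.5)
Your proof is correct and follows essentially the route the paper intends. The paper gives no written proof, only the pointer to (\ref{singverma}); you specialize (\ref{6b}) to $a=1$, split off the constant series $S(s)$, and evaluate it by comparing Srivastava's formula at $a=1$ with its $a=2$ case, which is exactly (\ref{singverma}). Your convergence remark is a justification the paper leaves implicit: $S(s)$ converges only for $\operatorname{Re}(s)<1$, so the identity is first obtained there and then extended by analytic continuation, using the geometric decay of $\zeta_H(s+k+1)-1$.
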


\section{Some results on the harmonic Stieltjes constants\label{secStieltjes}}

In this part, inspired by \cite{Coffey}, we investigate the constants arising
in the Laurent series expansion of the parametric harmonic zeta function at
its poles. We begin by summarizing the results derived from the use of
(\ref{7a}), explicit expressions for $\gamma_{H,-v}\left(  m,a,b\right)  $.

\begin{proposition}
The harmonic Stieltjes constants $\gamma_{H,1}\left(  m,a,b\right)  $ satisfy
\begin{align}
\gamma_{H,1}\left(  m,a,b\right)   &  =-\frac{\gamma_{m+1}\left(  b\right)
}{m+1}+\frac{\left(  -1\right)  ^{m}}{m+1}\sum\limits_{k=1}^{\infty}\left(
b-a\right)  ^{k}\zeta^{\left(  m+1\right)  }\left(  k+1,b\right) \nonumber\\
&  -\sum\limits_{k=1}^{\infty}\frac{\left(  -1\right)  ^{k+m}}{\left(
k+1\right)  !}\beta_{m,1}\left(  k,a,b\right)  , \label{gh1m}%
\end{align}
where%
\begin{align*}
\beta_{m,z}\left(  k,a,b\right)   &  =\left.  \frac{d^{m}}{ds^{m}}\left(
s\right)  _{k}\zeta_{H}\left(  s+k,a,b\right)  \right\vert _{s=z}.
\end{align*}

\end{proposition}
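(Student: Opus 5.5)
The plan is to read off the Laurent coefficients at $s=1$ directly from the representation (\ref{7a}). Write
\[
\zeta_{H}(s,a,b)=\frac{\zeta(s,b)}{s-1}+\frac{1}{s-1}\sum_{k=1}^{\infty}(b-a)^{k}\zeta(s+k,b)-\sum_{k=1}^{\infty}\frac{(-1)^{k}(s)_{k}}{(k+1)!}\zeta_{H}(s+k,a,b).
\]
I will expand each of the three pieces in powers of $s-1$. Then I will compare the coefficient of $(s-1)^{m}$ with the coefficient $(-1)^{m}\gamma_{H,1}(m,a,b)/m!$ prescribed by (\ref{laurentzetahsab}) with $v=-1$.

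\emph{First piece.} Dividing (\ref{hzLaurent}) by $s-1$ gives
\[
\frac{\zeta(s,b)}{s-1}=\frac{1}{(s-1)^{2}}+\frac{\gamma_{0}(b)}{s-1}+\sum_{n\ge1}\frac{(-1)^{n}\gamma_{n}(b)}{n!}(s-1)^{n-1}.
\]
So this piece contributes the double pole together with the coefficient $(-1)^{m+1}\gamma_{m+1}(b)/(m+1)!$ at $(s-1)^{m}$.

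\emph{Second piece.} For $k\ge1$ the function $\zeta(s+k,b)$ is holomorphic at $s=1$, with Taylor series $\sum_{j\ge0}\zeta^{(j)}(k+1,b)(s-1)^{j}/j!$. After division by $s-1$, the $j=0$ terms only feed the residue, which is consistent with Theorem~\ref{teo1}. The coefficient of $(s-1)^{m}$ is $\sum_{k\ge1}(b-a)^{k}\zeta^{(m+1)}(k+1,b)/(m+1)!$.

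\emph{Third piece.} For $k\ge1$ and $s$ near $1$ we have $\operatorname{Re}(s+k)>1$. Hence $(s)_{k}\zeta_{H}(s+k,a,b)$ is holomorphic there, and its coefficient of $(s-1)^{m}$ is $\beta_{m,1}(k,a,b)/m!$ by the definition of $\beta$.

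Equating coefficients and multiplying through by $(-1)^{m}m!$ gives
\[
\gamma_{H,1}(m,a,b)=-\frac{\gamma_{m+1}(b)}{m+1}+\frac{(-1)^{m}}{m+1}\sum_{k\ge1}(b-a)^{k}\zeta^{(m+1)}(k+1,b)-\sum_{k\ge1}\frac{(-1)^{k+m}}{(k+1)!}\beta_{m,1}(k,a,b),
\]
which is exactly (\ref{gh1m}). This part is pure bookkeeping.

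The main obstacle is justifying the termwise extraction of Taylor coefficients from the two infinite series. I would show that both series converge uniformly on a small closed disc around $s=1$, and then apply Weierstrass' theorem on uniformly convergent series of holomorphic functions, which permits differentiating term by term.

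For the second series, the bound $|\zeta(s+k,b)|\le C\,b^{-\operatorname{Re}(s)-k}$ (valid for $k\ge1$ and $s$ near $1$) gives domination by a geometric series in $|b-a|/b$. This works under the same convergence proviso as in (\ref{7a}), e.g.\ $|b-a|<b$.

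For the third series, I would use $|(s)_{k}|/(k+1)!=O(k^{\operatorname{Re}(s)-2})$ uniformly near $s=1$. In addition, $\zeta_{H}(s+k,a,b)$ is bounded uniformly in $k\ge1$ for $s$ near $1$; it even tends to its first term $1/(a\,b^{s+k})$ when $b>1$. The resulting comparison series is not summable on its own, so I would fall back on the standing hypothesis ``provided that the series converges''. That is, I would assume locally uniform convergence of (\ref{7a}) near $s=1$, as the paper implicitly does. Under that assumption the derivative series defining $\sum\beta_{m,1}$ converge by Weierstrass' theorem, and the coefficient comparison is legitimate.
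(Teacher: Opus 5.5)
Your proposal is correct and follows the paper's route: the paper obtains (\ref{gh1m}) by expanding (\ref{7a}) about $s=1$ and matching the coefficient of $(s-1)^{m}$ with (\ref{laurentzetahsab}) for $v=-1$, exactly as in your bookkeeping, and it justifies the termwise step only by the proviso ``provided that the series converges.'' Your convergence discussion is already more careful than the paper's, and for $b>1$ you can drop the fallback: $|(s)_{k}|\le(k+1)!$ for $|s|\le 2$ and $|\zeta_{H}(s+k,a,b)|\le Cb^{-k}$ uniformly for $s$ near $1$ and $k\ge1$, so the third series converges absolutely and uniformly there. Only $b=1$ needs an alternating-series (Dirichlet-test) argument, and your claim that $\zeta_{H}(s+k,a,b)$ stays bounded in $k$ fails for $b<1$, where (\ref{7a}) diverges.
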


In particular, for $m=0$ and $m=1$, we have%
\begin{align*}
\gamma_{H,1}\left(  0,a,b\right)   &  =-\gamma_{1}\left(  a\right)
+\sum\limits_{k=1}^{\infty}\frac{\left(  -1\right)  ^{k}}{k}\zeta\left(
k+1,a\right)  \left(  b-a\right)  ^{k}+\sum\limits_{k=2}^{\infty}\frac{\left(
-1\right)  ^{k}}{k}\zeta_{H}\left(  k,a,b\right)  ,\\
\gamma_{H,1}\left(  1,a,b\right)   &  =-\frac{\gamma_{2}\left(  b\right)  }%
{2}-\frac{1}{2}\sum\limits_{k=2}^{\infty}\left(  b-a\right)  ^{k-1}%
\zeta^{\left(  2\right)  }\left(  k,b\right) \\
&  +\sum\limits_{k=2}^{\infty}\frac{\left(  -1\right)  ^{k-1}}{k}\left\{
H_{k-1}\zeta_{H}\left(  k,a,b\right)  +\zeta_{H}^{\prime}\left(  k,a,b\right)
\right\}  ,
\end{align*}
upon the use of
\[
-\gamma_{1}\left(  b\right)  +\sum\limits_{k=1}^{\infty}\left(  b-a\right)
^{k}\zeta^{\prime}\left(  k+1,b\right)  =-\gamma_{1}\left(  a\right)
+\sum\limits_{k=1}^{\infty}\frac{\left(  -1\right)  ^{k}}{k}\zeta\left(
k+1,a\right)  \left(  b-a\right)  ^{k}.
\]
In the case $a=b$, these results simplify. For example,%
\begin{align}
\sum\limits_{k=2}^{\infty}\frac{\left(  -1\right)  ^{k}}{k}\zeta_{H}\left(
k,a\right)   &  =\gamma_{H,1}\left(  0,a\right)  +\gamma_{1}\left(  a\right)
,\label{gh10}\\
\gamma_{H,1}\left(  m,a\right)  =-\frac{\gamma_{m+1}\left(  a\right)  }{m+1}
&  -\sum\limits_{k=1}^{\infty}\frac{\left(  -1\right)  ^{k+m}}{\left(
k+1\right)  !}\beta_{m,1}\left(  k,a,a\right)  , \label{gh11}%
\end{align}
where $\gamma_{H,1}\left(  m,a\right)  =\gamma_{H,1}\left(  m,a,a\right)  $
and $H_{k}^{\left(  p\right)  }=\sum\limits_{j=1}^{k}1/j^{p}$. Notice that
(\ref{gh10}) is an analog of the familiar result
\[
\sum\limits_{k=2}^{\infty}\frac{\left(  -1\right)  ^{k}}{k}\zeta\left(
k\right)  =\gamma_{0}\left(  1\right)  =\gamma.
\]
Moreover, we infer from (\ref{gh1m}) with $a=b=1$ and \cite[Eq. (21)]{KDCC}
that the series
\[
\sum\limits_{k=1}^{\infty}\frac{\left(  -1\right)  ^{k+m}}{\left(  k+1\right)
!}\beta_{m,1}\left(  k,1,1\right)
\]
can be written in terms of Riemann zeta values, Stieltjes constants
$\gamma_{m}\left(  1\right)  $, and the integrals%
\[
\int\limits_{0}^{\infty}\frac{e^{-x}\log\frac{1-e^{-x}}{x}}{1-e^{-x}}\left(
\log x\right)  ^{j}dx,\text{ }0\leq j\leq m.
\]

\begin{proposition}
The harmonic Stieltjes constants $\gamma_{H,0}\left(  m,a,b\right)  $ satisfy%
\begin{align*}
\gamma_{H,0}\left(  m,a,b\right)   &  =\left(  -1\right)  ^{m+1}m!\sum
_{n=0}^{m}\frac{\zeta^{\left(  n\right)  }\left(  0,b\right)  +\left(
-1\right)  ^{n}\gamma_{n}\left(  b\right)  \left(  b-a\right)  }{n!}\\
&  +\left(  -1\right)  ^{m+1}m!\left\{  b-a+\sum_{n=0}^{m}\frac{1}{n!}%
\sum\limits_{k=2}^{\infty}\zeta^{\left(  n\right)  }\left(  k,b\right)
\left(  b-a\right)  ^{k}\right\} \\
&  +\frac{1}{2}\rho\left(  m,a,b\right)  -\sum\limits_{k=2}^{\infty}%
\frac{\left(  -1\right)  ^{m+k}}{\left(  k+1\right)  !}\beta_{m,0}\left(
k,a,b\right)  ,
\end{align*}
where $\rho\left(  m,a,b\right)  =\left\{
\begin{array}
[c]{ll}%
\gamma_{0}\left(  a\right)  , & m=0,\\
-m\gamma_{H,1}\left(  m-1,a,b\right)  , & m\geq1.
\end{array}
\right.  $
\end{proposition}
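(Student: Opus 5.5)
The plan is to read off the Taylor coefficients at $s=0$ directly from the representation (\ref{7a}):
\[
\zeta_{H}\left(s,a,b\right)=\frac{1}{s-1}\sum_{k=0}^{\infty}\left(b-a\right)^{k}\zeta\left(s+k,b\right)-\sum_{k=1}^{\infty}\frac{\left(-1\right)^{k}\left(s\right)_{k}}{\left(k+1\right)!}\zeta_{H}\left(s+k,a,b\right).
\]
By (\ref{laurentzetahsab}) with $v=0$, the constant $\gamma_{H,0}\left(m,a,b\right)$ equals $\left(-1\right)^{m}m!$ times the coefficient of $s^{m}$ in the regular part of $\zeta_{H}\left(s,a,b\right)$ at $s=0$. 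I will split the right-hand side of (\ref{7a}) into five groups and expand each one about $s=0$:
\begin{itemize}
\item[(i)] the term $k=0$ of the first sum, $\zeta\left(s,b\right)/\left(s-1\right)$;
\item[(ii)] the term $k=1$ of the first sum, $\left(b-a\right)\zeta\left(s+1,b\right)/\left(s-1\right)$;
\item[(iii)] the terms $k\geq2$ of the first sum;
\item[(iv)] the term $k=1$ of the second sum, which is $\frac{s}{2}\zeta_{H}\left(s+1,a,b\right)$;
\item[(v)] the terms $k\geq2$ of the second sum.
\end{itemize}
Throughout I use $\frac{1}{s-1}=-\sum_{j\geq0}s^{j}$.

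For (i) I multiply this geometric series by the Taylor series $\zeta\left(s,b\right)=\sum_{n}\zeta^{\left(n\right)}\left(0,b\right)s^{n}/n!$. The coefficient of $s^{m}$ is $-\sum_{n=0}^{m}\zeta^{\left(n\right)}\left(0,b\right)/n!$.

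For (ii) I use the Laurent expansion (\ref{hzLaurent}) of $\zeta\left(s+1,b\right)$ at $s=0$. The product $-\frac{1}{s}\sum_{j}s^{j}$ gives the simple pole $-\left(b-a\right)/s$ and a contribution $-\left(b-a\right)$ to every nonnegative power. The regular part of (\ref{hzLaurent}) contributes $-\left(b-a\right)\sum_{n\leq m}\left(-1\right)^{n}\gamma_{n}\left(b\right)/n!$.

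For (iii) the functions $\zeta\left(s+k,b\right)$ with $k\geq2$ are holomorphic at $0$. Their product with $-\sum s^{j}$ contributes $-\sum_{n\leq m}\frac{1}{n!}\sum_{k\geq2}\zeta^{\left(n\right)}\left(k,b\right)\left(b-a\right)^{k}$. Multiplying (i)--(iii) by $\left(-1\right)^{m}m!$ gives exactly the first two lines of the stated formula.

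For (iv), Theorem \ref{teo1} and (\ref{laurentzetahsab}) with $v=-1$, shifted by $s\mapsto s+1$, give
\[
s\,\zeta_{H}\left(s+1,a,b\right)=\frac{1}{s}+\gamma_{0}\left(a\right)+\sum_{j\geq0}\left(-1\right)^{j}\frac{\gamma_{H,1}\left(j,a,b\right)}{j!}s^{j+1},
\]
where I use that the residue at $s=1$ is $\gamma_{0}\left(a\right)$. Hence the coefficient of $s^{m}$ in (iv) is $\frac{1}{2}\gamma_{0}\left(a\right)$ for $m=0$, and $\frac{1}{2}\left(-1\right)^{m-1}\gamma_{H,1}\left(m-1,a,b\right)/\left(m-1\right)!$ for $m\geq1$. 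Multiplying by $\left(-1\right)^{m}m!$ yields $\frac{1}{2}\rho\left(m,a,b\right)$.

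For (v), $\left(s\right)_{k}\zeta_{H}\left(s+k,a,b\right)$ is holomorphic at $s=0$ when $k\geq2$. Its $m$th Taylor coefficient is $\beta_{m,0}\left(k,a,b\right)/m!$, which produces $-\sum_{k\geq2}\frac{\left(-1\right)^{m+k}}{\left(k+1\right)!}\beta_{m,0}\left(k,a,b\right)$.

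As a consistency check, the only polar contributions are $-\left(b-a\right)/s$ from (ii) and $\frac{1}{2s}$ from (iv). Their sum $\frac{1}{2}-\left(b-a\right)$ agrees with the residue at $s=0$ in Theorem \ref{teo1} for $v=0$.

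The main obstacle is analytic: I must justify expanding the infinite series in (\ref{7a}) termwise and differentiating them $m$ times at $s=0$. My first attempt would be to show locally uniform convergence on a small disc around $0$, punctured where the poles in (ii) and (iv) occur. For the first sum I would use $\zeta\left(s+k,b\right)=O\left(b^{-\operatorname{Re}s-k}\right)$ together with $\left|b-a\right|<b$ (or the standing convergence hypothesis). For the second sum I would use the bound $\zeta_{H}\left(s+k,a,b\right)=O\left(k\,b^{-k}\right)$ and the factor $1/\left(k+1\right)!$. Weierstrass' theorem then allows coefficientwise identification.
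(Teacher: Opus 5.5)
Your proposal is correct and follows the paper's route: the paper obtains this formula by expanding (\ref{7a}) about $s=0$ and reading off the coefficient of $s^{m}$. Your five-group bookkeeping reproduces every term, with $\frac{1}{2}\rho(m,a,b)$ coming from $\frac{s}{2}\zeta_{H}(s+1,a,b)$ via the double pole at $s=1$ and residue $\gamma_{0}(a)$.

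One correction to your convergence sketch: the factorial does not help, because $|(s)_{k}|/(k+1)!$ is only $O(k^{|s|-2})$. The second series therefore converges uniformly near $s=0$ only when $\zeta_{H}(s+k,a,b)$ stays bounded in $k$. At $b=1$ you need the bound $O(1)$ rather than $O(k)$, and for $b<1$ the series diverges. This is exactly the implicit ``provided that the series converges'' hypothesis that (\ref{7a}) inherits from setting $t=1$ in (\ref{04}).
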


The first two constants are
\begin{align*}
\gamma_{H,0}\left(  0,a,b\right)   &  =\frac{1}{2}\gamma_{0}\left(  a\right)
-\zeta\left(  0,b\right)  -\left(  \gamma_{0}\left(  a\right)  +1\right)
\left(  b-a\right)  ,\\
\gamma_{H,0}\left(  1,a,b\right)   &  =\zeta\left(  0,b\right)  +\zeta
^{\prime}\left(  0,b\right)  -\frac{1}{2}\gamma_{H,1}\left(  0,a,b\right)
+\left(  \gamma_{0}\left(  a\right)  -\gamma_{1}\left(  a\right)  +1\right)
\left(  b-a\right) \\
&  +\sum\limits_{k=1}^{\infty}\frac{\left(  -1\right)  ^{k}}{k}\zeta\left(
k+1,a\right)  \left(  b-a\right)  ^{k+1}+\sum\limits_{k=2}^{\infty}%
\frac{\left(  -1\right)  ^{k}}{\left(  k+1\right)  k}\zeta_{H}\left(
k,a,b\right)  .
\end{align*}
These formulas are further simplified for $a=b$;%
\[
\gamma_{H,0}\left(  0,a\right)  =\frac{1}{2}\gamma_{0}\left(  a\right)
-\zeta\left(  0,a\right)  ,
\]
which coincides with the result in \cite[Corollary 9]{KDCC} and in
\cite[Corollary 2]{BGP1} for $a=1$,
\begin{align}
\gamma_{H,0}\left(  1,a\right)   &  =\zeta\left(  0,a\right)  +\zeta^{\prime
}\left(  0,a\right)  -\frac{1}{2}\gamma_{H,1}\left(  0,a\right)
+\sum\limits_{k=2}^{\infty}\frac{\left(  -1\right)  ^{k}}{\left(  k+1\right)
k}\zeta_{H}\left(  k,a\right)  ,\label{gh01}\\
\gamma_{H,0}\left(  m,a\right)   &  =\left(  -1\right)  ^{m+1}m!\sum_{n=0}%
^{m}\frac{\zeta^{\left(  n\right)  }\left(  0,a\right)  }{n!}-\frac{m}%
{2!}\gamma_{H,1}\left(  m-1,a\right) \nonumber\\
&  +\sum\limits_{k=2}^{\infty}\frac{\left(  -1\right)  ^{m+1+k}}{\left(
k+1\right)  !}\beta_{m,0}\left(  k,a,a\right)  . \label{gh02}%
\end{align}

Employing (\ref{gh10}), (\ref{gh11}), (\ref{gh01}), and (\ref{gh02}) with the
use of
\[
\gamma_{H,1}\left(  0\right)  =\gamma_{H,1}\left(  0,1\right)  =\frac
{\gamma_{0}^{2}\left(  1\right)  +\zeta\left(  2\right)  }{2}%
\]
(see \cite[Eq. (6)]{CC} or \cite[Remark 2]{CDK}), we can derive closed-form
evaluations for certain alternating series:

\begin{corollary}
\label{cor1}The following series evaluations hold:%
\begin{align*}
\sum\limits_{k=2}^{\infty}\frac{\left(  -1\right)  ^{k}}{k}\zeta_{H}\left(
k\right)   &  =\gamma_{1}\left(  1\right)  +\frac{\gamma_{0}^{2}\left(
1\right)  +\zeta\left(  2\right)  }{2},\\
\sum\limits_{k=2}^{\infty}\frac{\left(  -1\right)  ^{k}}{k+1}\zeta_{H}\left(
k,a\right)  =\zeta\left(  0,a\right)  +  &  \zeta^{\prime}\left(  0,a\right)
+\gamma_{1}\left(  a\right)  +\frac{1}{2}\gamma_{H,1}\left(  0,a\right)
-\gamma_{H,0}\left(  1,a\right)  ,\\
2\sum\limits_{k=2}^{\infty}\frac{\left(  -1\right)  ^{k}}{k+1}\left\{
\zeta_{H}^{\prime}\left(  k,a\right)  +H_{k-1}\right.   &  \left.  \zeta
_{H}\left(  k,a\right)  \right\} \\
=\gamma_{H,0}\left(  2,a\right)  -\gamma_{H,1}\left(  1,a\right)  -  &
\gamma_{2}\left(  a\right)  +2\zeta\left(  0,a\right)  +2\zeta^{\prime}\left(
0,a\right)  +\zeta^{\prime\prime}\left(  0,a\right)  ,
\end{align*}
and%
\begin{align*}
&  3\sum\limits_{k=2}^{\infty}\frac{\left(  -1\right)  ^{k}}{k+1}\left\{
\zeta_{H}^{\prime\prime}\left(  k,a\right)  +2H_{k-1}\zeta_{H}^{\prime}\left(
k,a\right)  +\left(  \left(  H_{k-1}\right)  ^{2}-H_{k-1}^{\left(  2\right)
}\right)  \zeta_{H}\left(  k,a\right)  \right\} \\
&  \quad=\frac{3}{2}\gamma_{H,1}\left(  2,a\right)  -\gamma_{H,0}\left(
3,a\right)  +\gamma_{3}\left(  a\right)  +6\sum_{n=0}^{3}\frac{\zeta^{\left(
n\right)  }\left(  0,a\right)  }{n!}.
\end{align*}

\end{corollary}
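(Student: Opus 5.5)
Each of the four evaluations comes from specializing one of the identities (\ref{gh10}), (\ref{gh11}), (\ref{gh01}), (\ref{gh02}) and moving the known closed-form pieces to the other side. In every case we only need to expand the finitely many explicit terms and, where they occur, the $\beta$-coefficients.

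\textbf{First identity.} Put $a=1$ in (\ref{gh10}); since $\zeta_H(s,1)=\zeta_H(s)$ (as $H_n(1)=H_{n+1}$ and $\zeta_H(s,1)=\sum_{n\ge1}H_n n^{-s}$), we get
\[
\sum_{k\ge2}\frac{(-1)^k}{k}\zeta_H(k)=\gamma_{H,1}(0)+\gamma_1(1).
\]
Inserting $\gamma_{H,1}(0)=(\gamma_0^2(1)+\zeta(2))/2$ finishes this case.

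\textbf{Second identity.} Start from (\ref{gh01}) and use
\[
\frac{1}{(k+1)k}=\frac1k-\frac1{k+1}.
\]
This splits the series in (\ref{gh01}) into $\sum_{k\ge2}\frac{(-1)^k}{k}\zeta_H(k,a)$, which equals $\gamma_{H,1}(0,a)+\gamma_1(a)$ by (\ref{gh10}), minus $\sum_{k\ge2}\frac{(-1)^k}{k+1}\zeta_H(k,a)$. Solving for the latter gives
\[
\sum_{k\ge2}\frac{(-1)^k}{k+1}\zeta_H(k,a)=\zeta(0,a)+\zeta'(0,a)+\gamma_1(a)+\tfrac12\gamma_{H,1}(0,a)-\gamma_{H,0}(1,a),
\]
as claimed.

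\textbf{Third and fourth identities.} Take $m=2$ and $m=3$ in (\ref{gh02}) and compute $\beta_{m,0}(k,a,a)$ by Leibniz's rule. Writing $(s)_k=s(s+1)_{k-1}$, we have
\[
\frac{d^j}{ds^j}(s)_k\Big|_{s=0}=j\,\frac{d^{j-1}}{ds^{j-1}}(s+1)_{k-1}\Big|_{s=0}.
\]
Since $(s+1)_{k-1}\big|_{s=0}=(k-1)!$ and its logarithmic derivative at $0$ is $H_{k-1}$, the second derivative of $\log(s+1)_{k-1}$ at $0$ is $-H^{(2)}_{k-1}$. This gives
\[
\frac{d}{ds}(s)_k\Big|_0=(k-1)!,\qquad \frac{d^2}{ds^2}(s)_k\Big|_0=2(k-1)!H_{k-1},\qquad \frac{d^3}{ds^3}(s)_k\Big|_0=3(k-1)!\bigl(H_{k-1}^2-H_{k-1}^{(2)}\bigr).
\]
Because $(s)_k$ vanishes at $s=0$, these yield
\[
\beta_{2,0}=2(k-1)!\bigl\{\zeta_H'(k,a)+H_{k-1}\zeta_H(k,a)\bigr\},
\]
and $\beta_{3,0}$ equals $3(k-1)!$ times the bracket in the fourth identity.

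Next divide by $(k+1)!$ and apply the same splitting $\frac{1}{k(k+1)}=\frac1k-\frac1{k+1}$. The $\frac1k$-part of the resulting series is exactly the series in (\ref{gh11}) for the pair $(m,a)=(1,a)$ or $(2,a)$, since $\beta_{m,1}$ involves $\zeta_H(1+k,\cdot)$. To use this, shift the index by writing $\beta_{m-1,1}(k-1,a,a)$ in terms of $\zeta_H(k,a)$, noting that
\[
(s)_k=s\,(s+1)_{k-1},
\]
so that the $m$-th derivative of $(s)_k\zeta_H(s+k,a)$ at $s=0$ equals $m$ times the $(m-1)$-th derivative of $(s+1)_{k-1}\zeta_H(s+1+(k-1),a)$ at $s=0$, i.e. $m\,\beta_{m-1,1}(k-1,a,a)$. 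Therefore the $\frac1k$-part becomes a multiple of the (\ref{gh11}) series. Replace it by its value $\gamma_{H,1}(m-1,a)$ plus a multiple of $\gamma_m(a)$, combine with the $-\frac{m}{2}\gamma_{H,1}(m-1,a)$ term already present in (\ref{gh02}), and solve for the $\frac1{k+1}$-part.

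\textbf{Main obstacle.} The only real risk is bookkeeping: the index shift $k\mapsto k-1$ in matching with (\ref{gh11}), and the signs $(-1)^{m+k}$ versus $(-1)^{m+1+k}$. These are what determine the coefficients $\gamma_{H,1}$, $\gamma_2$, $\gamma_3$ and $m!\sum\zeta^{(n)}(0,a)/n!$ in the stated formulas, so we will check them by comparing the $m=1$ case against the second identity, which has already been proved directly.
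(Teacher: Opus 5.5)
Your proposal is correct and follows the paper's route: the paper obtains these evaluations by combining (\ref{gh10}), (\ref{gh11}), (\ref{gh01}) and (\ref{gh02}) with the known value of $\gamma_{H,1}(0)$, and your partial-fraction split together with the identity $\beta_{m,0}(k,a,a)=m\,\beta_{m-1,1}(k-1,a,a)$ supplies exactly the bookkeeping the paper omits. Carrying it out, the $\frac1k$-part of the series in (\ref{gh02}) equals $m\gamma_{H,1}(m-1,a)+\gamma_m(a)$; combined with the $-\frac m2\gamma_{H,1}(m-1,a)$ term already in (\ref{gh02}), this reproduces the stated coefficients for $m=1,2,3$.
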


It should be noted that Coppo \cite[Eq. (14)]{Co} also recorded the first
evaluation of Corollary \ref{cor1}.

\begin{proposition}
The harmonic Stieltjes constants $\gamma_{H,-v}\left(  m,a,b\right)  $,
$v\in\mathbb{N}$, satisfy
\begin{align}
&  \gamma_{H,-v}\left(  m,a,b\right) \nonumber\\
&  =\left(  -1\right)  ^{m+1}m!\sum_{n=0}^{m}\frac{\zeta^{\left(  n\right)
}\left(  -v,b\right)  +\left(  -1\right)  ^{n}\gamma_{n}\left(  b\right)
\left(  b-a\right)  ^{v+1}}{n!\left(  1+v\right)  ^{m-n+1}}\nonumber\\
&  -\sum\limits_{k=m+1}^{v}\frac{\left(  -1\right)  ^{m+k}}{\left(
k+1\right)  !}\frac{P_{k}^{\left(  m+1\right)  }\left(  -v\right)  }{\left(
m+1\right)  }a_{-1,k-v}\left(  a,b\right)  -\sum\limits_{k=v+2}^{\infty}%
\frac{\left(  -1\right)  ^{m+k}}{\left(  k+1\right)  !}\beta_{m,-v}\left(
k,a,b\right) \nonumber\\
&  +\chi\left(  m,v-1\right)  \frac{\left(  -1\right)  ^{v+m}}{\left(
v+2\right)  !}\frac{P_{v}^{\left(  m+1\right)  }\left(  -v\right)  }{m+1}%
+\chi\left(  m,v\right)  \frac{\left(  -1\right)  ^{v+m}}{\left(  v+2\right)
!}\gamma_{0}\left(  a\right)  P_{v}^{\left(  m\right)  }\left(  -v\right)
\nonumber\\
&  -\left(  -1\right)  ^{m}m!\left\{  \frac{\left(  b-a\right)  ^{v+1}%
}{\left(  1+v\right)  ^{m+2}}-\sum_{n=0}^{m}\sum
\limits_{\substack{k=0\\k\not =v}}^{\infty}\frac{\zeta^{\left(  n\right)
}\left(  k-v+1,b\right)  }{n!\left(  1+v\right)  ^{m-n+1}}\left(  b-a\right)
^{k+1}\right\} \nonumber\\
&  +\sum\limits_{k=1}^{v}\frac{\left(  -1\right)  ^{k+1}}{\left(  k+1\right)
!}\sum\limits_{\mu=0}^{\min\left\{  k,m\right\}  }\binom{m}{\mu}\left(
-1\right)  ^{\mu}P_{k}^{\left(  \mu\right)  }\left(  -v\right)  \gamma
_{H,k-v}\left(  m-\mu,a,b\right) \nonumber\\
&  +m\frac{\left(  -1\right)  ^{v-1}}{\left(  v+2\right)  !}\sum
\limits_{\mu=0}^{\min\left\{  v,m-1\right\}  }\binom{m-1}{\mu}\left(
-1\right)  ^{\mu}P_{v}^{\left(  \mu\right)  }\left(  -v\right)  \gamma
_{H,1}\left(  m-1-\mu,a,b\right)  , \label{39}%
\end{align}
where $P_{n}\left(  s\right)  =\left(  s\right)  _{n}=\sum_{\mu=0}^{n}%
\frac{P_{n}^{\left(  \mu\right)  }\left(  -v\right)  }{\mu!}\left(
s+v\right)  ^{\mu}$ and $\chi\left(  m,v\right)  =\left\{
\begin{array}
[c]{ll}%
1, & 0\leq m\leq v,\\
0, & \text{otherwise.}%
\end{array}
\right.  $
\end{proposition}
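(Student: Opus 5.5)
The plan is to expand both sides of (\ref{7a}) in Laurent series at $s=-v$, $v\in\mathbb{N}$, and compare the coefficients of $(s+v)^m$. This is the same procedure that produced (\ref{gh1m}) at $s=1$ and the formula for $\gamma_{H,0}(m,a,b)$ at $s=0$. Write (\ref{7a}) as
\[
\zeta_H(s,a,b)=\frac{1}{s-1}\sum_{k=0}^\infty (b-a)^k\zeta(s+k,b)-\sum_{k=1}^\infty\frac{(-1)^k P_k(s)}{(k+1)!}\zeta_H(s+k,a,b).
\]
Put $u=s+v$. The left side has Laurent coefficient $(-1)^m\gamma_{H,-v}(m,a,b)/m!$ at $u^m$. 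The whole computation is to extract the $u^m$ coefficient from each piece on the right and then multiply by $(-1)^m m!$.

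\textbf{First piece.} Use $\frac{1}{s-1}=-\frac{1}{1+v}\sum_{j\ge0}\frac{u^j}{(1+v)^j}$. The Hurwitz terms behave in three ways:
\begin{itemize}
\item For $k\le v$, $\zeta(s+k,b)$ is analytic at $s=-v$, with Taylor coefficients $\zeta^{(n)}(k-v,b)/n!$.
\item For $k=v+1$ there is a pole. Here (\ref{hzLaurent}) gives $\frac1u+\sum_n(-1)^n\gamma_n(b)u^n/n!$.
\item For $k\ge v+2$ the terms are analytic again.
\end{itemize}
The Cauchy product of $\frac{1}{s-1}$ with these produces the double sum over $n\le m$ and $k\ne v$ in (\ref{39}) (after the index shift $k\to k+1$), the $\gamma_n(b)(b-a)^{v+1}$ terms, and the extra $(b-a)^{v+1}/(1+v)^{m+2}$ term. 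That last term comes from the product of the $1/u$ pole with the $u^{m+1}$ coefficient of $1/(s-1)$. The $k=0$ term supplies $\zeta^{(n)}(-v,b)$.

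\textbf{Second piece.} Split the sum over $k$ according to the position of $s+k=k-v+u$ relative to the poles of $\zeta_H$ found in Theorem \ref{teo1}.
\begin{enumerate}
\item For $k\ge v+2$, the point $s+k\ge2$ is regular. The $u^m$ coefficient is $\beta_{m,-v}(k,a,b)/m!$, which gives the $\beta$ series.
\item For $k=v+1$, $\zeta_H(s+v+1,a,b)$ has the double pole at $1$ with $\omega=1$ and residue $\gamma_0(a)$. Since $P_{v+1}(s)=(s)_{v+1}$ vanishes at $s=-v$, one order of the pole cancels. Expanding $P_{v+1}(u-v)$ and using $P_{v+1}(s)=(s+v)P_v(s)$ turns the $1/u^2$ and $1/u$ contributions into the two $\chi$-terms, which involve $P_v^{(m+1)}(-v)$ and $\gamma_0(a)P_v^{(m)}(-v)$. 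The analytic part gives the last line, involving $\gamma_{H,1}(m-1-\mu,a,b)$ with $\binom{m-1}{\mu}$.
\item For $1\le k\le v$, the point $s+k=k-v$ is a simple pole with residue $a_{-1,k-v}(a,b)$. Multiplying by $P_k(s)=\sum_\mu P_k^{(\mu)}(-v)u^\mu/\mu!$, the residue contributes through $P_k^{(m+1)}(-v)$. This term is nonzero only if $k\ge m+1$, which explains the range $m+1\le k\le v$. The regular part contributes the convolution with $\gamma_{H,k-v}(m-\mu,a,b)$.
\end{enumerate}

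The main obstacle is bookkeeping rather than analysis. The points to track are the sign conventions $(-1)^m/m!$ in (\ref{laurentzetahsab}), the factor $1/(m+1)$ coming from $P^{(m+1)}/(m+1)!\cdot m!$, and the cancellation of the factor $(s+v)$ in $P_{v+1}$ for the $k=v+1$ term.

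I would also justify termwise differentiation of the $\beta$ series. Near $s=-v$ the terms with $k\ge v+2$ are bounded by $\zeta_H(\sigma+k,a,b)\ll(b-a)$-type geometric terms times $(s)_k/(k+1)!$, which converges locally uniformly wherever (\ref{7a}) converges. This is the same proviso ``provided that the series converges'' used for the earlier propositions.
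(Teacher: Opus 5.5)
Your plan is the paper's own route: expand (\ref{7a}) at $s=-v$ in powers of $u=s+v$ and match the coefficient of $u^m$. Your case split covers every term of (\ref{39}) correctly: $k=0$, $k=v+1$ and the remaining $k$ in the Hurwitz part, and $1\le k\le v$, $k=v+1$, $k\ge v+2$ in the $\zeta_H$ part. One caveat for when you actually do the bookkeeping: for every $k\neq v+1$, including $k=0$, the coefficient of $u^m$ in $\frac{1}{s-1}\zeta(s+k,b)$ is $-\sum_{n=0}^{m}\zeta^{(n)}(k-v,b)/\bigl(n!(1+v)^{m-n+1}\bigr)$. So the shifted double sum over $k\neq v$ carries the same factor $(-1)^{m+1}m!$ as the $\zeta^{(n)}(-v,b)$ term; that is, it should appear with a plus sign inside the braces, as it does in the $v=0$ proposition. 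The minus sign printed there in (\ref{39}) is therefore a misprint, and a correct computation will not reproduce it.
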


In the case $a=b$, (\ref{39}) simplifies. For example, for $v=1$ and $m=0,1,$
\begin{equation}
\gamma_{H,-1}\left(  0,a\right)  =\frac{1}{6}\gamma_{0}\left(  a\right)
-\frac{1}{2}\zeta\left(  -1,a\right)  -\frac{1}{2}\gamma_{H,0}\left(
0,a\right)  +\frac{1}{12} \label{41}%
\end{equation}
and
\begin{align}
\gamma_{H,-1}\left(  1,a\right)   &  =\frac{1}{6}\gamma_{0}\left(  a\right)
-\frac{1}{6}\gamma_{H,1}\left(  0,a\right)  -\frac{1}{2}\gamma_{H,0}\left(
0,a\right)  -\frac{1}{2}\gamma_{H,0}\left(  1,a\right) \nonumber\\
&  +\frac{\zeta\left(  -1,a\right)  }{4}+\frac{\zeta^{\prime}\left(
-1,a\right)  }{2}+\sum\limits_{k=2}^{\infty}\frac{\left(  -1\right)  ^{k}%
}{k\left(  k+1\right)  \left(  k+2\right)  }\zeta_{H}\left(  k,a\right)
.\nonumber
\end{align}
Remark that (\ref{41}) coincides with the result in \cite[Corollary 10]{KDCC}
upon correcting a misprint; the term $\frac{\gamma_{0}\left(  1\right)  }{12}$
in \cite[Corollary 3]{BGP1} must be read as $-\frac{\gamma_{0}\left(
1\right)  }{12}$ which implies that the corresponding term $\frac{\gamma
_{0}\left(  1\right)  }{6}$ in \cite[Corollary 10]{KDCC} is null. As mentioned
in the introdulctory section, for $m\geq1$ and $v\in\mathbb{N}\cup\left\{
0\right\}  $, no explicit formulas for $\gamma_{H,-v}\left(  m,a\right)  $ and
$\gamma_{H,-v}\left(  m,1\right)  $ have been previously established in the
literature.\bigskip

In a similar way, we can deduce explicit formulas for $\gamma_{H,-v}\left(
m,a,b+t\right)  .$ In this case, we employ (\ref{04}) and find

\noindent$\bullet$ for $\gamma_{H,1}\left(  m,a,b+t\right)  $;%
\[
\gamma_{H,1}\left(  m,a,b+t\right)  =\gamma_{H,1}\left(  m,a,b\right)
+\sum_{n=1}^{\infty}\left(  -1\right)  ^{n}\beta_{m,1}\left(  n,a,b\right)
\frac{t^{n}}{n!},
\]
\noindent$\bullet$ for $\gamma_{H,0}\left(  m,a,b+t\right)  $;%
\begin{align*}
\gamma_{H,0}\left(  0,a,b+t\right)   &  =\gamma_{H,0}\left(  0,a,b\right)
-\gamma_{0}\left(  a\right)  t,\\
\gamma_{H,0}\left(  m,a,b+t\right)   &  =\gamma_{H,0}\left(  m,a,b\right)
+mt\gamma_{H,1}\left(  m-1,a,b\right) \\
&  \ +\sum_{n=2}^{\infty}\left(  -1\right)  ^{n}\beta_{m,0}\left(
n,a,b\right)  \frac{t^{n}}{n!},\text{ }m\geq1,
\end{align*}
\noindent$\bullet$ for $\gamma_{H,-v}\left(  m,a,b+t\right)  $, $v\in
\mathbb{N}$;%
\begin{align*}
\gamma_{H,-v}\left(  0,a,b+t\right)   &  =\gamma_{H,-v}\left(  0,a,b\right)
+\left(  P_{v}^{\left(  1\right)  }\left(  -v\right)  +\gamma_{0}\left(
a\right)  P_{v}\left(  -v\right)  \right)  \frac{\left(  -t\right)  ^{v+1}%
}{\left(  v+1\right)  !}\\
&  +\sum_{n=1}^{v}\left(  a_{-1,n-v}\left(  a,b\right)  P_{n}^{\left(
1\right)  }\left(  -v\right)  +\gamma_{H,n-v}\left(  0,a,b\right)
P_{n}\left(  -v\right)  \right)  \frac{\left(  -t\right)  ^{n}}{n!}.
\end{align*}
and for $m\geq1$,%
\begin{align*}
&  \gamma_{H,-v}\left(  m,a,b+t\right) \\
&  =\gamma_{H,-v}\left(  m,a,b\right)  +\frac{\left(  -1\right)  ^{m}}%
{m+1}\sum_{n=m+1}^{v}\left(  -1\right)  ^{n}a_{-1,n-v}\left(  a,b\right)
P_{n}^{\left(  m+1\right)  }\left(  -v\right)  \frac{t^{n}}{n!}\\
&  +\left(  -1\right)  ^{m}\left(  \chi\left(  m,v\right)  \gamma_{0}\left(
a\right)  P_{v}^{\left(  m\right)  }\left(  -v\right)  +\chi\left(
m,v-1\right)  \frac{P_{v}^{\left(  m+1\right)  }\left(  -v\right)  }%
{m+1}\right)  \frac{\left(  -t\right)  ^{v+1}}{\left(  v+1\right)  !}\\
&  +\sum_{n=1}^{v}\frac{\left(  -t\right)  ^{n}}{n!}\sum\limits_{\mu=0}%
^{\min\left\{  n,m\right\}  }\binom{m}{\mu}\left(  -1\right)  ^{\mu}%
P_{n}^{\left(  \mu\right)  }\left(  -v\right)  \gamma_{H,n-v}\left(
m-\mu,a,b\right) \\
&  +m\frac{\left(  -t\right)  ^{v+1}}{\left(  v+1\right)  !}\sum
\limits_{\mu=0}^{\min\left\{  v,m-1\right\}  }\binom{m-1}{\mu}\left(
-1\right)  ^{\mu-1}P_{v}^{\left(  \mu\right)  }\left(  -v\right)  \gamma
_{H,1}\left(  m-1-\mu,a,b\right) \\
&  +\sum_{n=v+2}^{\infty}\left(  -1\right)  ^{n}\beta_{m,-v}\left(
n,a,b\right)  \frac{t^{n}}{n!}.
\end{align*}

For $t=1$, these expressions provide a closed-form evaluation formula for
$\gamma_{H,-v}\left(  m,a,b+1\right)  -\gamma_{H,-v}\left(  m,a,b\right)  ,$
i.e. a difference formula for $\gamma_{H,-v}\left(  m,a,b\right)  $. Moreover,
differentiating with respect to $t$ gives, for the cases $-v=1$ and $v=0$,
\[
\left.  \frac{\partial^{j}}{\partial t^{j}}\gamma_{H,1}\left(  m,a,b+t\right)
\right\vert _{t=0}=\left(  -1\right)  ^{j}\beta_{m,1}\left(  j,a,b\right)  ,
\]
and%
\begin{align*}
\left.  \frac{\partial^{j}}{\partial t^{j}}\gamma_{H,0}\left(  0,a,b+t\right)
\right\vert _{t=0}  &  =\left\{
\begin{array}
[c]{ll}%
-\gamma_{0}\left(  a\right)  , & j=1,\text{ }\smallskip\\
0, & j>1,
\end{array}
\right. \\
\left.  \frac{\partial^{j}}{\partial t^{j}}\gamma_{H,0}\left(  m,a,b+t\right)
\right\vert _{t=0}  &  =\left\{
\begin{array}
[c]{ll}%
m\gamma_{H,1}\left(  m-1,a,b\right)  , & j=1,\text{ }m\geq1,\smallskip\\
\left(  -1\right)  ^{j}\beta_{m,0}\left(  j,a,b\right)  , & j>1,\text{ }%
m\geq1.
\end{array}
\right.
\end{align*}

\subsection{A limit representation for $\gamma_{H,1}\left(  m,a,b\right)  $}

To prove the limit representation for $\gamma_{H,1}\left(  m,a,b\right)  $, we
need some auxiliary results. The proofs of these results follow by modifying
the method of \cite{BrB}.

\begin{lemma}
\label{4}For $\operatorname{Re}\left(  s\right)  >1$ we have%
\[
\zeta_{H}\left(  s,a,b\right)  =s\int_{1-b}^{\infty}\left(  \sum\limits_{n\leq
t}H_{n}\left(  a\right)  \right)  \left(  t+b\right)  ^{-s-1}dt.
\]

\end{lemma}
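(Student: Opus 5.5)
This is Abel summation applied to the step function $A(t)=\sum_{n\le t}H_n(a)$, where the sum runs over integers $0\le n\le t$. Then $A(t)=0$ for $t<0$ and $A(t)=\sum_{n=0}^{N}H_n(a)$ for $N\le t<N+1$.

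The lower limit $1-b$ seems odd at first, so I first check it. For $b>0$ and $t<0$ we have $A(t)=0$, so the integrand vanishes wherever $t<0$. If $b\le 1$, then $1-b\ge 0$, and we must confirm that the piece over $[0,1-b)$ is accounted for. I will therefore do the computation on a general lower limit $c$ and see which choices give the clean formula. I expect that in the intended reading the relevant lower point is where $A$ first jumps, namely $t=0$. Any interval on which $A\equiv0$ contributes nothing, so the lower limit can be taken as any point $\le 0$. I will state the assumption on $a,b$ (e.g.\ $b\ge1$, or real $a,b>0$) under which $1-b\le 0$, and treat this as the convention.

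Main computation. For $\operatorname{Re}(s)>1$, fix $N$ and compute
\[
s\int_{0}^{N+1}A(t)(t+b)^{-s-1}\,dt=\sum_{k=0}^{N}A(k)\bigl[(k+b)^{-s}-(k+1+b)^{-s}\bigr],
\]
since $A$ is constant on $[k,k+1)$. Summation by parts, using $A(k)-A(k-1)=H_k(a)$ and $A(-1)=0$, turns the right-hand side into
\[
\sum_{k=0}^{N}H_k(a)(k+b)^{-s}-A(N)(N+1+b)^{-s}.
\]
It remains to let $N\to\infty$. The partial sums $\sum_{k=0}^{N}H_k(a)(k+b)^{-s}$ converge to $\zeta_H(s,a,b)$ by definition.

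The main obstacle is the boundary term. Since $H_n(a)=O(\log n)$, we have $A(N)=O(N\log N)$, so
\[
|A(N)(N+1+b)^{-s}|=O\bigl(N^{1-\operatorname{Re}(s)}\log N\bigr)\to0
\]
because $\operatorname{Re}(s)>1$. The same growth bound gives absolute convergence of the improper integral: the integrand is $O(t^{-\operatorname{Re}(s)}\log t)$. This justifies passing to the limit and completes the proof, modulo the lower-limit bookkeeping in the first step.
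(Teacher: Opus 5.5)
Your argument is the same Abel-summation argument the paper has in mind when it says the lemma follows by modifying Berndt's method, and its core is correct. The telescoping over $[k,k+1)$, the summation by parts with $A(-1)=0$, and the bound $A(N)(N+1+b)^{-s}=O\bigl(N^{1-\operatorname{Re}(s)}\log N\bigr)$ together show
\[
s\int_{0}^{\infty}A(t)(t+b)^{-s-1}\,dt=\zeta_H(s,a,b),\qquad b>0,\ \operatorname{Re}(s)>1 .
\]

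The gap is the ``lower-limit bookkeeping'' you leave open: it is a genuine restriction, not a convention. If $b\ge 1$, then $A\equiv 0$ on $[1-b,0)$, and the stated formula follows from the display above. If $0<b<1$, the integral from $1-b$ omits $[0,1-b)$, on which $A\equiv H_0(a)=1/a$. Hence
\[
s\int_{1-b}^{\infty}A(t)(t+b)^{-s-1}\,dt=\zeta_H(s,a,b)-\frac{1}{a}\left(b^{-s}-1\right).
\]
The correction term never vanishes, because $|b^{-s}|=b^{-\operatorname{Re}(s)}>1$. For instance, with $a=1$, $b=1/2$, $s=2$ the two sides differ by $3$. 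So the lemma as printed is false for $0<b<1$. You were right to be suspicious, but you need to say so and add the hypothesis $b\ge1$ explicitly. Your parenthetical ``or real $a,b>0$'' is wrong: it does not give $1-b\le 0$.

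Moving the endpoint to $0$ would make the identity true for every $b>0$, but it is not a neutral fix for this paper. The choice $1-b$ makes $t+b=1$ at the endpoint, and that produces the constant term $(b-a-1)\bigl(\gamma(0,a)-1+\log(1+a-b)\bigr)$ in Lemma~\ref{5}. The repair consistent with the later results is therefore to keep the endpoint $1-b$ and assume $b\ge 1$.
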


Let%
\begin{align}
E\left(  x,a\right)   &  =\sum\limits_{n\leq x}H_{n}\left(  a\right)  -\left(
x+a\right)  \log\left(  x+a\right)  -\left(  x+a\right)  \left(  \gamma\left(
0,a\right)  -1\right) \nonumber\\
&  =O\left(  \log\left(  x+a\right)  \right)  . \label{E(x,a)}%
\end{align}

\begin{lemma}
\label{5}For $\operatorname{Re}\left(  s\right)  >0$ we have%
\begin{align}
f\left(  s,a,b\right)   &  =s\int_{1-b}^{\infty}\left(  t+b\right)
^{-s-1}E\left(  t,a\right)  dt\label{f(s,a,b)}\\
&  =\left(  b-a-1\right)  \left(  \gamma\left(  0,a\right)  -1+\log\left(
1+a-b\right)  \right) \nonumber\\
&  +\sum\limits_{m=0}^{\infty}\left\{  \frac{\left(  -1\right)  ^{m}%
\gamma_{H,1}\left(  m,a,b\right)  }{m!}+\sum\limits_{n=0}^{\infty}%
\frac{\left(  -1\right)  ^{m}\left(  b-a\right)  ^{n+1}}{\left(  n+1\right)
^{m+2}}\right\}  \left(  s-1\right)  ^{m}.\nonumber
\end{align}

\end{lemma}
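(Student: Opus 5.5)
The plan is to start from Lemma \ref{4} and subtract the two main terms of $E(t,a)$ explicitly. For $\operatorname{Re}(s)>1$, the definition (\ref{E(x,a)}) and Lemma \ref{4} give
\[
f\left(s,a,b\right)=\zeta_{H}\left(s,a,b\right)-s\int_{1-b}^{\infty}\frac{(t+a)\log(t+a)}{(t+b)^{s+1}}dt-\left(\gamma\left(0,a\right)-1\right)s\int_{1-b}^{\infty}\frac{t+a}{(t+b)^{s+1}}dt .
\]
Since $E(t,a)=O(\log(t+a))$, the integral defining $f$ converges absolutely and locally uniformly for $\operatorname{Re}(s)>0$. Hence $f$ is analytic there, and the identity will extend once it is proved for $\operatorname{Re}(s)>1$. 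It therefore suffices to compute the Taylor expansion of the right-hand side about $s=1$.

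To evaluate the two integrals, substitute $u=t+b$ and write $t+a=u+c$ with $c=a-b$. The second integral is elementary:
\[
s\int_{1}^{\infty}(u+c)u^{-s-1}du=\frac{s}{s-1}+c=1+\frac{1}{s-1}+c .
\]
For the first integral, I would first integrate by parts once (differentiating $(u+c)\log(u+c)$ and integrating $u^{-s-1}$). This produces the boundary term at $u=1$, which involves $(1+c)\log(1+c)$, and this is exactly where the constant $\log(1+a-b)$ in the statement comes from. In the remaining integral I would split $\log(u+c)=\log u+\log(1+c/u)$ and expand $\log(1+c/u)=\sum_{n\ge1}(-1)^{n+1}c^{n}/(nu^{n})$. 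This is valid for $u\ge1$ when $|a-b|<1$; the general case should follow afterwards by analytic continuation in the parameters. Integrating term by term then uses
\[
\int_1^\infty u^{-s}\log u\,du=\frac{1}{(s-1)^2},\qquad \int_1^\infty u^{-s-n}du=\frac{1}{s+n-1}.
\]

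Next I would expand everything about $s=1$. For $\zeta_H$ I would use (\ref{laurentzetahsab}) with $v=-1$, which gives the double pole $(s-1)^{-2}$, the residue $\gamma_0(a)$ from Theorem \ref{teo1}, and the regular part $\sum_m(-1)^m\gamma_{H,1}(m,a,b)(s-1)^m/m!$. In the subtracted terms, $s/(s-1)^2=(s-1)^{-2}+(s-1)^{-1}$ cancels the double pole. The residue $\gamma_0(a)$ has to cancel against $(\gamma(0,a)-1)+1$ from the second integral, which forces the reading $\gamma(0,a)=\gamma_0(a)$. This check confirms that $f$ has no pole at $s=1$. Finally, each factor $1/(s+n)$, written as $1/\bigl((n+1)+(s-1)\bigr)$, expands as a geometric series in $(s-1)/(n+1)$. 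Combined with the coefficient $c^{n+1}$ (up to signs), this produces the double series $\sum_n(-1)^m(b-a)^{n+1}(n+1)^{-m-2}$, and the leftover constants should assemble into $(b-a-1)(\gamma_0(a)-1+\log(1+a-b))$.

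I expect the main obstacle to be this bookkeeping step: tracking the signs from $c=a-b$ versus $b-a$, the powers $(n+1)^{-(m+2)}$ (one extra factor comes from the integration by parts), and showing that all the constant terms from the boundary term and from the expansions of $s/(s-1)$ and $s/(s+n)$ collapse exactly to the stated closed form. I would first check the $m=0$ coefficient in the special case $a=b$, where $c=0$ and the double series vanishes, before doing the general computation.
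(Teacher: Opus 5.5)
Your proposal is correct and follows the route the paper indicates (the paper only says the proof adapts the Abel-summation/main-term-subtraction method it cites). You write $f=\zeta_H(s,a,b)-I_1-(\gamma_0(a)-1)I_2$ via Lemma~\ref{4}, with $I_2=1+c+\frac{1}{s-1}$ and, after your integration by parts, $I_1=(1+c)\log(1+c)+\frac{1}{(s-1)^2}+\frac{1}{s-1}+\sum_{n\ge 1}\frac{(-1)^{n+1}c^n}{n(n+s-1)}$; expanding $\frac{1}{n+(s-1)}$ geometrically then gives exactly the stated formula.

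One small correction: there is no ``general case'' to reach by analytic continuation in $a,b$. The double series in the statement diverges for $|b-a|>1$, so the lemma only makes sense for $|b-a|\le 1$. The boundary values $|b-a|=1$ follow from your computation by dominated or monotone convergence, where at $b-a=1$ the term $(1+c)\log(1+c)$ is read as $0$.
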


\begin{theorem}
\label{teo 2}Let $m$ be a non-negative integer and let $u<0$. Then, we have%
\begin{align*}
\sum\limits_{n\leq x}\frac{H_{n}\left(  a\right)  \log^{m}\left(  n+b\right)
}{\left(  n+b\right)  ^{-u}}  &  =\int_{1-b}^{x}\frac{\log\left(  t+a\right)
\log^{m}\left(  t+b\right)  +\gamma\left(  0,a\right)  \log^{m}\left(
t+b\right)  }{\left(  t+b\right)  ^{-u}}dt\\
&  +\left(  -1\right)  ^{m}f^{\left(  m\right)  }\left(  -u,a,b\right)
+\delta_{m}+o\left(  1\right)  ,
\end{align*}
where $f\left(  s,a,b\right)  $ is the function introduced in Lemma \ref{5},
and
\[
\delta_{m}=\left\{
\begin{array}
[c]{cc}%
\left(  1-b+a\right)  \left(  \gamma\left(  0,a\right)  -1+\log\left(
1-b+a\right)  \right)  , & m=0,\\
0, & \text{otherwise.}%
\end{array}
\right.
\]

\end{theorem}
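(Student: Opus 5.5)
This is the standard Briggs--Buschman argument (as in \cite{BrB}) applied to the weights $H_n(a)$. Put $s=-u>0$ and $g(t)=(t+b)^{-s}\log^m(t+b)$, and write $A(t)=\sum_{n\le t}H_n(a)$. Since the sum starts at $n=0$ and the lower limit is $1-b$, Riemann--Stieltjes integration by parts (Abel summation) gives
\[
\sum_{n\le x}H_n(a)g(n)=A(x)g(x)-\int_{1-b}^{x}A(t)g'(t)\,dt
\]
up to the boundary term at $t=1-b$; I would track that term carefully. Next substitute $A(t)=(t+a)\log(t+a)+(t+a)(\gamma(0,a)-1)+E(t,a)$ from (\ref{E(x,a)}). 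The main-term part is integrated by parts back. Using $\frac{d}{dt}\bigl[(t+a)\log(t+a)+(t+a)(\gamma(0,a)-1)\bigr]=\log(t+a)+\gamma(0,a)$, it becomes
\[
\int_{1-b}^{x}\frac{\log(t+a)\log^m(t+b)+\gamma(0,a)\log^m(t+b)}{(t+b)^{-u}}\,dt
\]
plus boundary terms. The boundary term at $x$ cancels against $A(x)g(x)$, leaving $E(x,a)g(x)=O\bigl(x^{u}\log^{m+1}x\bigr)=o(1)$ because $u<0$. The boundary term at $t=1-b$ is $-\bigl[(1+a-b)\log(1+a-b)+(1+a-b)(\gamma(0,a)-1)\bigr]g(1-b)$. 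Since $g(1-b)=1^{-s}\log^m 1$ equals $1$ for $m=0$ and $0$ for $m\ge1$, this term produces $\delta_m$, possibly combined with the boundary contribution of $A$ at $1-b$ (sign conventions to be checked).

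The remaining piece is $-\int_{1-b}^{x}E(t,a)g'(t)\,dt$. Because $E(t,a)=O(\log t)$ and $g'(t)=O(t^{-s-1}\log^m t)$, this integral converges as $x\to\infty$, and its tail is $o(1)$. So it equals $-\int_{1-b}^{\infty}E(t,a)g'(t)\,dt+o(1)$. To identify this with $(-1)^mf^{(m)}(s,a,b)$, observe that
\[
\frac{d^m}{ds^m}\bigl[(t+b)^{-s}\bigr]=(-1)^m(t+b)^{-s}\log^m(t+b).
\]
The key identity is $f(s,a,b)=-\int_{1-b}^\infty E(t,a)\,\frac{\partial}{\partial t}(t+b)^{-s}\,dt$, which is (\ref{f(s,a,b)}). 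Differentiating it $m$ times in $s$ under the integral sign, and noting that $\partial_s$ and $\partial_t$ commute, gives $f^{(m)}(s,a,b)=-(-1)^m\int E(t,a)g'(t)\,dt$. Hence the remaining piece equals $(-1)^mf^{(m)}(-u,a,b)$.

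I expect the main obstacle to be justifying the differentiation under the integral sign. The bound $|E(t,a)\,\partial_t\partial_s^m(t+b)^{-s}|\ll \log^{m+1}(t+b)\,(t+b)^{-\sigma-1}$ holds locally uniformly for $\operatorname{Re}s>0$ and is integrable, which justifies it. Lemma \ref{5} already guarantees that $f$ is analytic for $\operatorname{Re}(s)>0$. Beyond that, the remaining care is in the bookkeeping of the constants at the lower endpoint $t=1-b$, to confirm that they produce exactly $\delta_m$.
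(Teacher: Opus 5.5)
Your route is the one the paper intends. The paper gives no written proof and only says the result follows by modifying the Briggs--Buschman method, which is exactly what you do: Abel summation against $A(t)=\sum_{n\le t}H_n(a)$, splitting off $E(t,a)$, and differentiating $f(s,a,b)=-\int_{1-b}^{\infty}E(t,a)\,\partial_t(t+b)^{-s}\,dt$ under the integral sign. Your identification $(-1)^m f^{(m)}(s,a,b)=-\int_{1-b}^{\infty}E(t,a)g'(t)\,dt$ is correct, and so are the $o(1)$ estimates and the domination argument.

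There is, however, one wrong sign. Put $M(t)=(t+a)\log(t+a)+(t+a)(\gamma(0,a)-1)$. Integration by parts gives $-\int_{1-b}^{x}M(t)g'(t)\,dt=-M(x)g(x)+M(1-b)g(1-b)+\int_{1-b}^{x}M'(t)g(t)\,dt$. The lower-endpoint term must have the opposite sign to the $-M(x)g(x)$ that you correctly cancel against $A(x)g(x)$. So it is $+M(1-b)g(1-b)$, not $-M(1-b)g(1-b)$. Since $g(1-b)$ equals $1$ for $m=0$ and $0$ for $m\ge1$, and $M(1-b)=(1-b+a)\bigl(\gamma(0,a)-1+\log(1-b+a)\bigr)$, this term is exactly $\delta_m$ on its own.

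Nothing from $A$ at $t=1-b$ should be combined with it. For $b\ge1$ one has $1-b\le 0$, and $A(x)g(x)-\int_{1-b}^{x}A(t)g'(t)\,dt=\sum_{0\le n\le x}H_n(a)g(n)$ holds exactly, with no extra term. For $0<b<1$, the $n=0$ term lies below $1-b$, and this identity is off by $H_0(a)\bigl(g(0)-g(1-b)\bigr)=a^{-1}\bigl(b^{-s}\log^m b-g(1-b)\bigr)$. That is the same defect as in Lemma~\ref{4} and in the integral form of $f$ in Lemma~\ref{5}. It disappears only if $f$ is understood through $\zeta_H(s,a,b)$, as on the right-hand side of (\ref{f(s,a,b)}), rather than through the integral. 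With the sign corrected and $b\ge 1$ (where Lemma~\ref{4} is valid), your proof is complete.
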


We now state and prove the aforementioned limit representation.

\begin{theorem}
\label{teo 1}Let $0<b-a\leq1$. Then, the coefficients $\gamma_{H,1}\left(
m,a,b\right)  $ have the limit representation%
\[
\gamma_{H,1}\left(  m,a,b\right)  =\lim_{x\longrightarrow\infty}\left(
\sum\limits_{n\leq x}\frac{H_{n}\left(  a\right)  \log^{m}\left(  n+b\right)
}{n+b}-\frac{\log^{m+2}\left(  x+b\right)  }{m+2}-\gamma\left(  0,a\right)
\frac{\log^{m+1}\left(  x+b\right)  }{m+1}\right)  .
\]

\end{theorem}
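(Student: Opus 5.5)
The plan is to specialize Theorem \ref{teo 2} to $u=-1$, so that $(n+b)^{-u}=n+b$, and then read off $\gamma_{H,1}(m,a,b)$ from Lemma \ref{5}.

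\textbf{Step 1: specialize Theorem \ref{teo 2}.} For fixed $m\geq0$, Theorem \ref{teo 2} with $u=-1$ gives
\[
\sum_{n\leq x}\frac{H_{n}(a)\log^{m}(n+b)}{n+b}=\int_{1-b}^{x}\frac{\log(t+a)\log^{m}(t+b)+\gamma(0,a)\log^{m}(t+b)}{t+b}dt+(-1)^{m}f^{(m)}(1,a,b)+\delta_{m}+o(1).
\]

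\textbf{Step 2: evaluate $(-1)^m f^{(m)}(1,a,b)$.} By Lemma \ref{5}, $f^{(m)}(1,a,b)/m!$ is the coefficient of $(s-1)^{m}$. For $m\geq1$ this yields
\[
(-1)^{m}f^{(m)}(1,a,b)=\gamma_{H,1}(m,a,b)+m!\sum_{n\geq0}\frac{(b-a)^{n+1}}{(n+1)^{m+2}}.
\]
For $m=0$ there is the extra term $(b-a-1)(\gamma(0,a)-1+\log(1+a-b))$. This term cancels exactly against $\delta_{0}$. That cancellation is why $\delta_0$ is present in Theorem \ref{teo 2}.

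\textbf{Step 3: expand the integral (main obstacle).} The term $\gamma(0,a)\int_{1-b}^{x}\log^{m}(t+b)\,dt/(t+b)$ equals exactly $\gamma(0,a)\log^{m+1}(x+b)/(m+1)$, since the lower limit gives $\log 1=0$.

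For the other term, write $\log(t+a)=\log(t+b)+\log\left(1-\frac{b-a}{t+b}\right)$. Then substitute $y=\log(t+b)$, so $y$ runs from $0$ to $\log(x+b)$. The main part becomes $\log^{m+2}(x+b)/(m+2)$.

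For the remainder, use the condition $0<b-a\leq1$. For $t\geq1-b$ we have $t+b\geq1$, hence $0<(b-a)/(t+b)\leq1$. This lets us expand
\[
\log\left(1-\frac{b-a}{t+b}\right)=-\sum_{k\geq1}\frac{(b-a)^{k}}{k}e^{-ky}.
\]
The series has nonnegative terms of one sign, so integrating termwise is justified by monotone convergence; at the endpoint $b-a=1$ the $k=1$ term carries a logarithmic singularity, which is integrable. As $x\to\infty$, each term satisfies $\int_{0}^{\infty}y^{m}e^{-ky}dy=m!/k^{m+1}$. The remainder therefore converges to
\[
-m!\sum_{k\geq1}\frac{(b-a)^{k}}{k^{m+2}},
\]
and the tail beyond $\log(x+b)$ is $o(1)$.

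\textbf{Step 4: conclude.} Setting $k=n+1$, this limit cancels exactly the sum $m!\sum_{n\geq0}(b-a)^{n+1}/(n+1)^{m+2}$ from Step 2. Rearranging the identity from Step 1 and letting $x\to\infty$ then gives the claimed limit representation of $\gamma_{H,1}(m,a,b)$.

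The delicate points are confirming the coefficient normalization in Lemma \ref{5}, since the factor $m!$ must match, and justifying the termwise integration at the endpoint $b-a=1$.
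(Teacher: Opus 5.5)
Your proposal is correct and is essentially the paper's proof: specialize Theorem~\ref{teo 2} at $u=-1$, read off $(-1)^m f^{(m)}(1,a,b)$ from Lemma~\ref{5} (with $\delta_0$ cancelling the constant term, which the paper's sketch handles loosely by writing a stray $-\delta_m$), and show $\int_{1-b}^{x}\frac{\log(t+a)\log^{m}(t+b)}{t+b}\,dt=\frac{\log^{m+2}(x+b)}{m+2}-m!\sum_{k\geq1}\frac{(b-a)^{k}}{k^{m+2}}+o(1)$. The only difference is cosmetic: the paper evaluates this integral exactly by integration by parts, getting the explicit error term $\sum_{k\geq1}\frac{(b-a)^{k}}{(x+b)^{k}}\sum_{j=0}^{m}\frac{m!\log^{m-j}(x+b)}{k^{j+2}(m-j)!}$, whereas you expand $\log\left(1-\frac{b-a}{t+b}\right)$ in powers of $e^{-y}$ and pass to the limit by monotone convergence.
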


\begin{proof}
We sketch the proof. It follows from Lemma (\ref{5}) that
\begin{equation}
f\left(  1,a,b\right)  =\left(  b-a-1\right)  \left(  \gamma\left(
0,a\right)  -1+\log\left(  1+a-b\right)  \right)  +\gamma_{H,1}\left(
0,a,b\right)  +\sum\limits_{n=1}^{\infty}\frac{\left(  b-a\right)  ^{n}}%
{n^{2}} \label{m=0}%
\end{equation}
and
\[
\left(  -1\right)  ^{m}f^{\left(  m\right)  }\left(  1,a,b\right)
=\gamma_{H,1}\left(  m,a,b\right)  +m!\sum\limits_{n=1}^{\infty}\frac{\left(
b-a\right)  ^{n}}{n^{m+2}}\text{ for }m>0.
\]
where
\[
a_{m}=\frac{\left(  -1\right)  ^{m}}{m!}\gamma_{H,1}\left(  m,a,b\right)
\]
Setting $u=-1$ in Theorem \ref{teo 2} we find that
\begin{align*}
\gamma_{H,1}\left(  m,a,b\right)   &  =\sum\limits_{n\leq x}\frac{H_{n}\left(
a\right)  \log^{m}\left(  n+b\right)  }{n+b}-m!\sum\limits_{n=1}^{\infty}%
\frac{\left(  b-a\right)  ^{n}}{n^{m+2}}-\delta_{m}+o\left(  1\right) \\
&  \ -\int_{1-b}^{x}\frac{\log\left(  t+a\right)  \log^{m}\left(  t+b\right)
}{t+b}dt-\gamma\left(  0,a\right)  \int_{1-b}^{x}\frac{\log^{m}\left(
t+b\right)  }{t+b}dt.
\end{align*}
Employing the integration by parts we see that%
\begin{align*}
\int_{1-b}^{x}\frac{\log\left(  t+a\right)  \log^{m}\left(  t+b\right)  }%
{t+b}dt  &  =\frac{\log^{m+2}\left(  x+b\right)  }{m+2}-m!\sum\limits_{k=1}%
^{\infty}\frac{\left(  b-a\right)  ^{k}}{k^{m+2}}\\
&  +\sum\limits_{k=1}^{\infty}\frac{\left(  b-a\right)  ^{k}}{\left(
x+b\right)  ^{k}}\sum\limits_{j=0}^{m}\frac{m!\log^{m-j}\left(  x+b\right)
}{k^{j+2}\left(  m-j\right)  !}.
\end{align*}
Combining these results we arrive at%
\begin{align*}
&  \gamma_{H,1}\left(  m,a,b\right) \\
&  \ =\sum\limits_{n\leq x}\frac{H_{n}\left(  a\right)  \log^{m}\left(
n+b\right)  }{n+b}-\frac{\log^{m+2}\left(  x+b\right)  }{m+2}-\gamma\left(
0,a\right)  \frac{\log^{m+1}\left(  x+b\right)  }{m+1}\\
&  \ -\sum\limits_{k=1}^{\infty}\sum\limits_{j=0}^{m}\frac{\left(  b-a\right)
^{k}}{k^{j+2}\left(  x+b\right)  ^{k}}\frac{m!\log^{m-j}\left(  x+b\right)
}{\left(  m-j\right)  !}+o\left(  1\right)  ,
\end{align*}
which gives the desired result as $x\rightarrow\infty$.
\end{proof}

\section{The harmonic digamma function and a summation
formula\label{secDigamma}}

The classical identity
\[
\sum\limits_{m=2}^{\infty}\left(  -1\right)  ^{m}\zeta\left(  m,a\right)
t^{m-1}=\psi\left(  a+t\right)  -\psi\left(  a\right)  ,
\]
shows that the Hurwitz zeta values at positive integers occur as the Taylor
coefficients of the digamma function. Thus, the digamma function encodes the
Hurwitz zeta values at positive integers into a single analytic function. It
is therefore natural to ask whether an analogous function exists for the
parametric harmonic zeta function. This motivates the introduction of the
harmonic digamma function.

The following proposition provides the key analytic ingredient for the
construction of such a function.

\begin{proposition}
Let $0<a<2$ and $b\in\mathbb{R}\backslash\left\{  0,-1,-2,-3,\ldots\right\}
$. Then, for $N\in\mathbb{N}$ we have
\begin{align*}
\sum_{n=0}^{N}\dfrac{H_{n}\left(  a\right)  }{n+b}  &  =\frac{\log^{2}\left(
N+b\right)  }{2}-\psi\left(  a\right)  \log\left(  N+b\right)  -\psi
_{H}\left(  a,b\right)  +\mathrm{Li}_{2}\left(  \frac{b-a}{b+N}\right) \\
&  +\frac{\log\frac{N+a}{N+b}}{2\left(  b-a\right)  }-\overline{B}_{1}\left(
0\right)  \dfrac{H_{N}\left(  a\right)  }{N+b}+\frac{\overline{B}_{2}\left(
0\right)  }{2}\left(  \frac{\psi^{\prime}\left(  N+1+a\right)  }{N+b}%
-\frac{H_{N}\left(  a\right)  }{\left(  N+b\right)  ^{2}}\right) \\
&  +\frac{1}{2}%
{\displaystyle\int\limits_{N}^{\infty}}
f^{(2)}(u)\overline{B}_{2}\left(  u\right)  du+%
{\displaystyle\int\limits_{0}^{\infty}}
\frac{2\left(  a-b\right)  \left(  \arctan\left(  \frac{N+a}{t}\right)
-\frac{\pi}{2}\right)  }{\left(  e^{2\pi t}-1\right)  \left(  \left(
a-b\right)  ^{2}+t^{2}\right)  }dt\\
&  +%
{\displaystyle\int\limits_{0}^{\infty}}
\frac{\log\left(  \frac{t^{2}}{\left(  N+b\right)  ^{2}}+1\right)  }{\left(
e^{2\pi t}-1\right)  \left(  \left(  a-b\right)  ^{2}+t^{2}\right)  }%
tdt+\frac{\delta_{a}}{ab}%
\end{align*}
where $\delta_{a}=1$ if $0<a\leq1$ and $\delta_{a}=0$ if $1<a<2$, and
\begin{equation}
\psi_{H}\left(  a,b\right)  :=-\lim_{N\rightarrow\infty}\left(  \sum_{n=0}%
^{N}\dfrac{H_{n}\left(  a\right)  }{n+b}-\frac{\log^{2}\left(  N+b\right)
}{2}+\psi\left(  a\right)  \log\left(  N+b\right)  \right)  \label{hdigamma}%
\end{equation}
(that we propose to call harmonic digamma function).
\end{proposition}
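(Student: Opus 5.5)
The plan is to apply the Euler--Maclaurin formula, in its finite form with remainder over $\overline{B}_2$, to the summand $f(u)=H_u(a)/(u+b)$. Here $H_u(a)$ is extended to real $u$ by $H_u(a)=\psi(u+1+a)-\psi(a)$. This is legitimate because $H_n(a)=\sum_{k=0}^n 1/(k+a)=\psi(n+1+a)-\psi(a)$.

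Summing from $n=1$ to $N$ (the $n=0$ term $H_0(a)/b=1/(ab)$ being kept separately) gives
\[
\sum_{n=1}^{N}f(n)=\int_0^N f(u)\,du-\overline{B}_1(0)\,(f(N)-f(0))+\frac{\overline{B}_2(0)}{2}\bigl(f'(N)-f'(0)\bigr)-\frac12\int_0^N f''(u)\overline{B}_2(u)\,du .
\]
Next I split $\int_0^N=\int_0^\infty-\int_N^\infty$ in the remainder. The piece over $[N,\infty)$ produces the term $\frac12\int_N^\infty f^{(2)}\overline{B}_2$ in the statement. Since $f'(N)=\psi'(N+1+a)/(N+b)-H_N(a)/(N+b)^2$, the boundary terms at $N$ are exactly the displayed $\overline{B}_1(0)$ and $\overline{B}_2(0)$ terms. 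All $N$-independent pieces (the values at $0$ and the integral of $f''\overline{B}_2$ over $[0,\infty)$) are collected into a constant. I expect the term $\delta_a/(ab)$ to come from this bookkeeping of $f(0)$ and of the $n=0$ term. The case split at $a=1$ should reflect where $\psi(u+1+a)$ is handled, via the recurrence $\psi(x+1)=\psi(x)+1/x$, so as to keep arguments in a range where Binet's formula applies.

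The main step is the integral $\int_0^N \psi(u+1+a)/(u+b)\,du$. For this I use Binet's second formula,
\[
\psi(z)=\log z-\frac{1}{2z}-2\int_0^\infty\frac{t\,dt}{(t^2+z^2)(e^{2\pi t}-1)},
\]
with $z=u+a$, after a shift of $\psi(u+1+a)=\psi(u+a)+1/(u+a)$. The pieces then come out as follows.

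\begin{itemize}
\item The integral $\int \log(u+a)/(u+b)\,du$ gives $\tfrac12\log^2(N+b)$ plus a dilogarithm. Writing $\log(u+a)=\log(u+b)+\log\bigl(1-\tfrac{b-a}{u+b}\bigr)$ and integrating yields $\mathrm{Li}_2\bigl(\tfrac{b-a}{N+b}\bigr)$ plus constants.
\item The rational parts $\int \frac{1}{(u+a)(u+b)}\,du$ and $\int\frac{1}{2(u+a)(u+b)}\,du$ give $\log\frac{N+a}{N+b}$ divided by $(b-a)$; with the $1/2$ coefficient this is the displayed term $\frac{\log\frac{N+a}{N+b}}{2(b-a)}$.
\item The $-\psi(a)$ part gives $-\psi(a)\log(N+b)$.
\item For the Binet double integral, I swap the order of integration and integrate $\frac{1}{(u+b)((u+a)^2+t^2)}$ in $u$ by partial fractions over $(u+b)$ and $(u+a)^2+t^2$. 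The common denominator is $(a-b)^2+t^2$. The result is a $\log\bigl(\tfrac{t^2}{(N+b)^2}+1\bigr)$-type term together with an $\arctan\frac{N+a}{t}-\frac{\pi}{2}$ term, after absorbing the limits at $\infty$ into the constant. These are exactly the two remaining integrals in the statement.
\end{itemize}

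Finally, all $N$-independent quantities are gathered into one constant $C$. Every displayed $N$-dependent remainder tends to $0$ as $N\to\infty$: the dilogarithm and the log-ratio term, the $O(\log N/N)$ boundary terms, the tail $\int_N^\infty$, and the $t$-integrals by dominated convergence. Hence the limit in (\ref{hdigamma}) exists and equals $-C$, which identifies $C=-\psi_H(a,b)$.

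I expect the main obstacle to be the precise bookkeeping of the constants: the Binet swap, and checking that the arctan and log terms vanish at $u\to\infty$ so that only finite-$N$ expressions remain. The condition $0<a<2$ with the $\delta_a$ split also has to be tracked carefully. I would first do the computation formally, then verify it by checking the $N\to\infty$ consistency.
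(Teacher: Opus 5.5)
Your route is the paper's: Euler--Maclaurin with $l=2$ for $f(x)=(\psi(x+1+a)-\psi(a))/(x+b)$, then Binet's formula for $\psi(x+a)$ after the shift $\psi(x+1+a)=\psi(x+a)+1/(x+a)$, then partial fractions in the swapped double integral, and finally the constant identified via the limit defining $\psi_H(a,b)$.

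The one difference is the lower endpoint. The paper takes $\alpha=1-a$, so its boundary constants involve $\psi(2)$, $\log(1+b-a)$ and $\arctan(1/t)$. With that choice, $\sum_{1-a<m\le N}$ omits the $m=0$ term exactly when $0<a\le 1$, and never contains $m=-1$ because $a<2$. That is where $\delta_a$ and the hypothesis $0<a<2$ come from, not the range of validity of Binet's formula. With your $\alpha=0$ no case split arises.

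Your proposal breaks at the two places where you assert that the computation reproduces the displayed terms. It does not, and in both cases the fault is in the displayed formula.

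First, $\delta_a/(ab)$ cannot survive your bookkeeping. It is independent of $N$, so it goes into $C$, and your final step pins $C=-\psi_H(a,b)$ exactly. Indeed, let $N\to\infty$ in the displayed identity. Everything on the right except $\frac12\log^2(N+b)-\psi(a)\log(N+b)-\psi_H(a,b)+\frac{\delta_a}{ab}$ tends to $0$, so you get $\delta_a/(ab)=0$. Hence the statement as printed is false for $0<a\le1$. In the paper, $\delta_a/(ab)$ enters both (\ref{30}) and the evaluation (\ref{31}) of $\frac12\int_{1-a}^{\infty}f^{(2)}\overline{B}_2$. It cancels when (\ref{31}) is substituted into (\ref{30}), so its reappearance in the final display is a slip.

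Second, partial fractions give
\[
\int_N^\infty\frac{du}{(u+b)\bigl((u+a)^2+t^2\bigr)}=\frac{1}{(a-b)^2+t^2}\Bigl[\frac12\log\frac{(N+a)^2+t^2}{(N+b)^2}+\frac{b-a}{t}\Bigl(\frac{\pi}{2}-\arctan\frac{N+a}{t}\Bigr)\Bigr].
\]
So the arctan integral comes out exactly as displayed. The last integral, however, must carry $\log\frac{(N+a)^2+t^2}{(N+b)^2}$, which coincides with $\log\bigl(\frac{t^2}{(N+b)^2}+1\bigr)$ only when $a=b$. The paper's evaluation of the inner $x$-integral has $(N+b)^2$ where $(N+a)^2$ belongs.

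A derivative check confirms this. Only the corrected pair of $t$-integrals has $N$-derivative $-\frac{2}{N+b}\int_0^\infty\frac{t\,dt}{((N+a)^2+t^2)(e^{2\pi t}-1)}$, which is the Binet part of $f(N)$. With the printed logarithm a term of order $(b-a)/N^2$ is left over, so the printed identity fails at finite $N$ whenever $a\neq b$.

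Carried out correctly, your argument proves the identity with $\delta_a/(ab)$ deleted and $\log\frac{(N+a)^2+t^2}{(N+b)^2}$ in the last integral, for $a>0$ and $b>0$. The paper's endpoint needs $1+b-a>0$ instead. Neither route covers the full stated range of $b$, since $f$ must have no pole on the interval of integration. Your argument cannot prove the statement as printed, so those two ``exact match'' steps should be replaced by these corrections rather than forced.
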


\begin{proof}
We make use of the Euler-Maclaurin summation formula in the following form
\cite[Theorem 9.2.2]{C}: Let $\alpha$ and $\beta$ be real numbers such that
$\alpha\leq\beta$ and assume that $f\in C^{\left(  l\right)  }\left[
\alpha,\beta\right]  $ for some $l\geq1.$ Then%
\begin{align}
\sum_{\alpha<m\leq\beta}f(m)  &  =%
{\displaystyle\int\limits_{\alpha}^{\beta}}
f(u)du+\sum\limits_{j=1}^{l}\frac{(-1)^{j}}{j!}\left(  \overline{B}_{j}\left(
\beta\right)  f^{(j-1)}(\beta)-\overline{B}_{j}\left(  \alpha\right)
f^{(j-1)}(\alpha)\right) \nonumber\\
&  +\frac{(-1)^{l-1}}{l!}%
{\displaystyle\int\limits_{\alpha}^{\beta}}
f^{(l)}(u)\overline{B}_{l}\left(  u\right)  du. \label{E-M}%
\end{align}

Let $f\left(  x\right)  =\dfrac{\psi\left(  x+1+a\right)  -\psi\left(
a\right)  }{x+b}$ in (\ref{E-M}) with $\alpha=1-a$ and $\beta=N$. Then,%
\begin{align*}
\sum_{n=0}^{N}\dfrac{H_{n}\left(  a\right)  }{n+b}  &  =%
{\displaystyle\int\limits_{1-a}^{N}}
\dfrac{\psi\left(  x+1+a\right)  -\psi\left(  a\right)  }{x+b}dx+\overline
{B}_{1}\left(  -a\right)  \dfrac{\psi\left(  2\right)  -\psi\left(  a\right)
}{1+b-a}\\
&  -\overline{B}_{1}\left(  0\right)  \dfrac{H_{N}\left(  a\right)  }%
{N+b}-\overline{B}_{2}\left(  -a\right)  \left(  \frac{\psi^{\prime}\left(
2\right)  }{1+b-a}-\frac{\psi\left(  2\right)  -\psi\left(  a\right)
}{\left(  1+b-a\right)  ^{2}}\right) \\
&  +\frac{\overline{B}_{2}\left(  0\right)  }{2}\left(  \frac{\psi^{\prime
}\left(  N+1+a\right)  }{N+b}-\frac{H_{N}\left(  a\right)  }{\left(
N+b\right)  ^{2}}\right) \\
&  -\frac{1}{2}%
{\displaystyle\int\limits_{1-a}^{\infty}}
f^{(2)}(u)\overline{B}_{2}\left(  u\right)  du+\frac{1}{2}%
{\displaystyle\int\limits_{N}^{\infty}}
f^{(2)}(u)\overline{B}_{2}\left(  u\right)  du+\frac{\delta_{a}}{ab}.
\end{align*}
We now use
\[
\psi\left(  x\right)  =\log\left(  x\right)  -\frac{1}{2x}-2%
{\displaystyle\int\limits_{0}^{\infty}}
\dfrac{t}{\left(  t^{2}+x^{2}\right)  \left(  e^{2\pi t}-1\right)  }dt,\text{
}\operatorname{Re}\left(  x\right)  >0,
\]
(cf. \cite[Chp. 1.3]{SC}) and deduce that%
\begin{align*}%
{\displaystyle\int\limits_{1-a}^{N}}
\dfrac{\psi\left(  x+1+a\right)  }{x+b}dx  &  =%
{\displaystyle\int\limits_{1-a}^{N}}
\dfrac{\log\left(  x+a\right)  }{x+b}dx+\frac{1}{2}%
{\displaystyle\int\limits_{1-a}^{N}}
\dfrac{dx}{\left(  x+b\right)  \left(  x+a\right)  }\\
&  -2%
{\displaystyle\int\limits_{0}^{\infty}}
\frac{t}{e^{2\pi t}-1}%
{\displaystyle\int\limits_{1-a}^{N}}
\dfrac{1}{\left(  x+b\right)  \left(  t^{2}+\left(  x+a\right)  ^{2}\right)
}dxdt.
\end{align*}
Here the integrals on the RHS are
\begin{align*}
\int\limits_{1-a}^{N}\frac{\log\left(  t+a\right)  dt}{\left(  t+b\right)  }
&  =\frac{\log^{2}\left(  N+b\right)  }{2}-\frac{\log^{2}\left(  1+b-a\right)
}{2}\\
&  +\mathrm{Li}_{2}\left(  \frac{b-a}{b+N}\right)  -\mathrm{Li}_{2}\left(
\frac{b-a}{1+b-a}\right)  ,
\end{align*}
and%
\begin{align*}
{\int\limits_{1-a}^{N}}\dfrac{1}{\left(  x+b\right)  \left(  t^{2}+\left(
x+a\right)  ^{2}\right)  }dx  &  =\frac{\log\left(  N+b\right)  -\log\left(
1+b-a\right)  }{\left(  a-b\right)  ^{2}+t^{2}}\\
&  -\frac{\arctan\frac{N+a}{t}-\arctan\frac{1}{t}}{\left(  a-b\right)
^{2}+t^{2}}\frac{a-b}{t}-\frac{1}{2}\frac{\log\frac{t^{2}+\left(  N+b\right)
^{2}}{t^{2}+1}}{\left(  a-b\right)  ^{2}+t^{2}}.
\end{align*}
Thus, we arrive at
\begin{align}
\sum_{n=0}^{N}\dfrac{H_{n}\left(  a\right)  }{n+b}  &  =\frac{\log^{2}\left(
N+b\right)  }{2}-\psi\left(  a\right)  \log\left(  N+b\right)  +\psi\left(
a\right)  \log\left(  1+b-a\right) \nonumber\\
&  -\frac{\log^{2}\left(  1+b-a\right)  }{2}+\frac{\overline{B}_{2}\left(
0\right)  }{2}\left(  \frac{\psi^{\prime}\left(  N+1+a\right)  }{N+b}%
-\frac{H_{N}\left(  a\right)  }{\left(  N+b\right)  ^{2}}\right) \nonumber\\
&  +\mathrm{Li}_{2}\left(  \frac{b-a}{b+N}\right)  -\mathrm{Li}_{2}\left(
\frac{b-a}{1-a+b}\right)  +%
{\displaystyle\int\limits_{0}^{\infty}}
\frac{2t\log\left(  1+b-a\right)  }{\left(  e^{2\pi t}-1\right)  \left(
\left(  a-b\right)  ^{2}+t^{2}\right)  }dt\nonumber\\
&  +\frac{\log\frac{N+a}{N+b}+\log\left(  1+b-a\right)  }{2\left(  b-a\right)
}+%
{\displaystyle\int\limits_{0}^{\infty}}
\frac{\log\left(  \frac{t^{2}}{\left(  N+b\right)  ^{2}}+1\right)
-\log\left(  t^{2}+1\right)  }{\left(  e^{2\pi t}-1\right)  \left(  \left(
a-b\right)  ^{2}+t^{2}\right)  }tdt\nonumber\\
&  -\overline{B}_{1}\left(  0\right)  \dfrac{H_{N}\left(  a\right)  }%
{N+b}+2\left(  a-b\right)
{\displaystyle\int\limits_{0}^{\infty}}
\frac{\arctan\left(  \frac{N+a}{t}\right)  -\arctan\left(  \frac{1}{t}\right)
}{\left(  e^{2\pi t}-1\right)  \left(  \left(  a-b\right)  ^{2}+t^{2}\right)
}dt\nonumber\\
&  +\overline{B}_{1}\left(  -a\right)  \dfrac{\psi\left(  2\right)
-\psi\left(  a\right)  }{1+b-a}-\overline{B}_{2}\left(  -a\right)  \left(
\frac{\psi^{\prime}\left(  2\right)  }{\left(  1+b-a\right)  }-\frac{\left(
\psi\left(  2\right)  -\psi\left(  a\right)  \right)  }{\left(  1+b-a\right)
^{2}}\right) \nonumber\\
&  -\frac{1}{2}%
{\displaystyle\int\limits_{1-a}^{\infty}}
f^{(2)}(u)\overline{B}_{2}\left(  u\right)  du+\frac{1}{2}%
{\displaystyle\int\limits_{N}^{\infty}}
f^{(2)}(u)\overline{B}_{2}\left(  u\right)  du+\frac{\delta_{a}}{ab}.
\label{30}%
\end{align}
On the other hand, the identity
\begin{align*}
\frac{1}{2}{\int\limits_{1-a}^{N}}f^{(2)}(u)\overline{B}_{2}\left(  u\right)
du  &  =-\sum_{n=0}^{N}\dfrac{H_{n}\left(  a\right)  }{n+b}+\frac{\log
^{2}\left(  N+b\right)  }{2}-\psi\left(  a\right)  \log\left(  N+b\right)
+\frac{\delta_{a}}{ab}\\
&  +\psi\left(  a\right)  \log\left(  1+b-a\right)  -\frac{\log^{2}\left(
1+b-a\right)  }{2}-\mathrm{Li}_{2}\left(  \frac{b-a}{1-a+b}\right) \\
&  +\frac{\log\frac{N+a}{N+b}+\log\left(  1+b-a\right)  }{2\left(  b-a\right)
}+2%
{\displaystyle\int\limits_{0}^{\infty}}
\frac{\log\left(  1+b-a\right)  }{\left(  e^{2\pi t}-1\right)  \left(  \left(
a-b\right)  ^{2}+t^{2}\right)  }tdt\\
&  +\mathrm{Li}_{2}\left(  \frac{b-a}{b+N}\right)  +2\left(  a-b\right)
{\displaystyle\int\limits_{0}^{\infty}}
\frac{\arctan\left(  \frac{N+a}{t}\right)  -\arctan\left(  \frac{1}{t}\right)
}{\left(  e^{2\pi t}-1\right)  \left(  \left(  a-b\right)  ^{2}+t^{2}\right)
}dt\\
&  +\overline{B}_{1}\left(  -a\right)  \dfrac{\psi\left(  2\right)
-\psi\left(  a\right)  }{1+b-a}+%
{\displaystyle\int\limits_{0}^{\infty}}
\frac{\log\left(  \frac{t^{2}}{\left(  N+b\right)  ^{2}}+1\right)
-\log\left(  t^{2}+1\right)  }{\left(  e^{2\pi t}-1\right)  \left(  \left(
a-b\right)  ^{2}+t^{2}\right)  }tdt\\
&  -\overline{B}_{1}\left(  0\right)  \dfrac{H_{N}\left(  a\right)  }%
{N+b}+\frac{\overline{B}_{2}\left(  0\right)  }{2}\left(  \frac{\psi^{\prime
}\left(  N+1+a\right)  }{N+b}-\frac{H_{N}\left(  a\right)  }{\left(
N+b\right)  ^{2}}\right) \\
&  -\overline{B}_{2}\left(  -a\right)  \left(  \frac{\psi^{\prime}\left(
2\right)  }{\left(  1+b-a\right)  }-\frac{\left(  \psi\left(  2\right)
-\psi\left(  a\right)  \right)  }{\left(  1+b-a\right)  ^{2}}\right)
\end{align*}
yields%
\begin{align}
&  \frac{1}{2}{\int\limits_{1-a}^{\infty}}f^{(2)}(u)\overline{B}_{2}\left(
u\right)  du\nonumber\\
&  =-\lim_{N\rightarrow\infty}\left(  \sum_{n=0}^{N}\dfrac{H_{n}\left(
a\right)  }{n+b}-\frac{\log^{2}\left(  N+b\right)  }{2}+\psi\left(  a\right)
\log\left(  N+b\right)  \right)  +\frac{\delta_{a}}{ab}\nonumber\\
&  +\psi\left(  a\right)  \log\left(  1+b-a\right)  -\frac{\log^{2}\left(
1+b-a\right)  }{2}-\mathrm{Li}_{2}\left(  \frac{b-a}{1-a+b}\right) \nonumber\\
&  +\frac{\log\left(  1+b-a\right)  }{2\left(  b-a\right)  }+2%
{\displaystyle\int\limits_{0}^{\infty}}
\frac{\log\left(  1+b-a\right)  }{\left(  e^{2\pi t}-1\right)  \left(  \left(
a-b\right)  ^{2}+t^{2}\right)  }tdt\nonumber\\
&  +2\left(  a-b\right)
{\displaystyle\int\limits_{0}^{\infty}}
\frac{\frac{\pi}{2}-\arctan\left(  \frac{1}{t}\right)  }{\left(  e^{2\pi
t}-1\right)  \left(  \left(  a-b\right)  ^{2}+t^{2}\right)  }dt-%
{\displaystyle\int\limits_{0}^{\infty}}
\frac{t\log\left(  t^{2}+1\right)  }{\left(  e^{2\pi t}-1\right)  \left(
\left(  a-b\right)  ^{2}+t^{2}\right)  }dt\nonumber\\
&  +\overline{B}_{1}\left(  -a\right)  \dfrac{\psi\left(  2\right)
-\psi\left(  a\right)  }{1+b-a}-\overline{B}_{2}\left(  -a\right)  \left(
\frac{\psi^{\prime}\left(  2\right)  }{\left(  1+b-a\right)  }-\frac{\left(
\psi\left(  2\right)  -\psi\left(  a\right)  \right)  }{\left(  1+b-a\right)
^{2}}\right)  . \label{31}%
\end{align}
Hence, from (\ref{30}) and (\ref{31}), it follows that%
\begin{align*}
\sum_{n=0}^{N}\dfrac{H_{n}\left(  a\right)  }{n+b}  &  =-\psi_{H}\left(
a,b\right)  +\frac{\log^{2}\left(  N+b\right)  }{2}-\psi\left(  a\right)
\log\left(  N+b\right)  +\frac{\delta_{a}}{ab}\\
&  +\mathrm{Li}_{2}\left(  \frac{b-a}{b+N}\right)  +\frac{\log\frac{N+a}{N+b}%
}{b-a}+\frac{\overline{B}_{2}\left(  0\right)  }{2}\left(  \frac{\psi^{\prime
}\left(  N+1+a\right)  }{N+b}-\frac{H_{N}\left(  a\right)  }{\left(
N+b\right)  ^{2}}\right) \\
&  +\frac{1}{2}%
{\displaystyle\int\limits_{N}^{\infty}}
f^{(2)}(u)\overline{B}_{2}\left(  u\right)  du+2\left(  a-b\right)
{\displaystyle\int\limits_{0}^{\infty}}
\frac{\arctan\left(  \frac{N+a}{t}\right)  -\frac{\pi}{2}}{\left(  e^{2\pi
t}-1\right)  \left(  \left(  a-b\right)  ^{2}+t^{2}\right)  }dt\\
&  -\overline{B}_{1}\left(  0\right)  \dfrac{H_{N}\left(  a\right)  }{N+b}+%
{\displaystyle\int\limits_{0}^{\infty}}
\frac{\log\left(  \frac{t^{2}}{\left(  N+b\right)  ^{2}}+1\right)  }{\left(
e^{2\pi t}-1\right)  \left(  \left(  a-b\right)  ^{2}+t^{2}\right)  }tdt,
\end{align*}
which is the desired assertion.
\end{proof}

Remark that the function $\psi_{H}\left(  a,a\right)  $ corresponds to\emph{
the harmonic Stieltjes constant} $-\gamma_{H}\left(  0,a\right)  $, which
appears in the Laurent expansion of $\zeta_{H}\left(  s,a\right)  $ in a
neighborhood of $s=1$ (see \cite[Theorem 1]{KDCC}), and $\psi_{H}\left(
1,1\right)  $ corresponds to \emph{the harmonic Stieltjes constant}
$-\gamma_{H}\left(  0\right)  $ occurring in the Laurent expansion of
$\zeta_{H}\left(  s\right)  $ in a neighborhood of $s=1$ (see \cite[p. 8]{CC}).

The harmonic digamma function $\psi_{H}\left(  a,b\right)  $ defined by
(\ref{hdigamma}) shares several properties with the classical digamma function.

\begin{proposition}
We have%
\begin{equation}
\psi_{H}\left(  a,b\right)  =-\lim_{N\rightarrow\infty}\left(  \sum_{n=0}%
^{N}\dfrac{H_{n}\left(  a\right)  }{n+b}-\frac{\log^{2}N}{2}+\psi\left(
a\right)  \log N\right)  \label{hd2}%
\end{equation}
and%
\begin{align}
\psi_{H}\left(  a,b\right)   &  =\psi_{H}\left(  a,1\right)  +\left(
b-1\right)  \sum_{n=0}^{\infty}\frac{H_{n}\left(  a\right)  }{\left(
n+1\right)  \left(  n+b\right)  }\nonumber\\
&  =\psi_{H}\left(  a,1\right)  +\sum_{n=0}^{\infty}\left(  \frac{H_{n}\left(
a\right)  }{n+1}-\frac{H_{n}\left(  a\right)  }{n+b}\right)  . \label{hd3}%
\end{align}
Moreover, for $m\in\mathbb{N}$,
\begin{align}
\psi_{H}^{\left(  m\right)  }\left(  a,b\right)   &  =\left(  -1\right)
^{m+1}m!\zeta_{H}\left(  m+1,a,b\right)  ,\label{hdt}\\
\psi_{H}^{\left(  m\right)  }\left(  a,q\right)   &  =\psi_{H}^{\left(
m\right)  }\left(  a,1\right)  +\lambda_{a}\left(  -1\right)  ^{m}%
m!\zeta\left(  m+2,a\right) \nonumber\\
&  +\left(  -1\right)  ^{m}m!\sum_{\substack{b=1\\b\not =a}}^{q-1}\left(
\frac{\gamma_{0}\left(  b\right)  -\gamma_{0}\left(  a\right)  }{\left(
b-a\right)  ^{m+1}}-\sum\limits_{j=2}^{m+1}\frac{\zeta\left(  j,b\right)
}{\left(  b-a\right)  ^{m+2-j}}\right)  , \label{hd4}%
\end{align}
and
\begin{equation}
\psi_{H}\left(  a,b\right)  =\psi_{H}\left(  a,1\right)  +\frac{1}%
{\Gamma\left(  s\right)  }\int_{0}^{\infty}\frac{e^{-x}-e^{-xb}}{1-e^{-x}}%
\Phi\left(  e^{-x},1;a\right)  dx, \label{hdi}%
\end{equation}
where $\psi_{H}^{\left(  m\right)  }\left(  a,x\right)  =\left.  \left(
\frac{\partial}{\partial t}\right)  ^{m}\psi_{H}\left(  a,t\right)
\right\vert _{t=x}$, $\Phi\left(  x,s;a\right)  =\sum_{k=0}^{\infty}%
x^{k}\left(  k+a\right)  ^{-s}$, and $\lambda_{a}=\left\{
\begin{array}
[c]{ll}%
1, & a\in\left\{  1,2,\ldots,q-1\right\}  \text{,}\\
0, & \text{otherwise.}%
\end{array}
\right.  $
\end{proposition}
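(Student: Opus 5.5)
We work directly from the limit definition (\ref{hdigamma}) and prove the identities in the order (\ref{hd2}), (\ref{hd3}), (\ref{hdt}), (\ref{hd4}), (\ref{hdi}), each feeding the next.

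For (\ref{hd2}) we compare the two subtracted expressions. Since $\log(N+b)=\log N+\log(1+b/N)=\log N+O(1/N)$, we get $\log^{2}(N+b)-\log^{2}N=O(\log N/N)$ and $\psi(a)(\log(N+b)-\log N)=O(1/N)$. Both differences tend to $0$, so the two limits coincide.

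For (\ref{hd3}) we apply (\ref{hd2}) with $b$ and with $1$ and subtract. The logarithmic terms cancel exactly, leaving
\[
\psi_{H}(a,b)-\psi_{H}(a,1)=\lim_{N\rightarrow\infty}\sum_{n=0}^{N}\left(\frac{H_{n}(a)}{n+1}-\frac{H_{n}(a)}{n+b}\right)=(b-1)\sum_{n=0}^{\infty}\frac{H_{n}(a)}{(n+1)(n+b)}.
\]
This series converges absolutely, because $H_{n}(a)=O(\log n)$.

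For (\ref{hdt}) we differentiate the series in (\ref{hd3}) termwise in $b$. Only $-H_{n}(a)/(n+b)$ depends on $b$, and its $m$th derivative is $-(-1)^{m}m!\,H_{n}(a)(n+b)^{-m-1}$. Hence $\psi_{H}^{(m)}(a,b)=(-1)^{m+1}m!\,\zeta_{H}(m+1,a,b)$. Termwise differentiation is justified since for $m\geq1$ the differentiated series converges uniformly for $b$ in compact subsets away from the nonpositive integers.

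For (\ref{hd4}), with $q$ a positive integer, we telescope (\ref{*2}) from $b=1$ to $b=q$:
\[
\zeta_{H}(s,a,q)=\zeta_{H}(s,a,1)-\sum_{b=1}^{q-1}\sum_{n\ge0}\frac{1}{(n+a)(n+b)^{s-1}}.
\]
Take $s=m+1$ and multiply by $(-1)^{m+1}m!$. Each inner sum then has to be evaluated in closed form.
\begin{itemize}
\item If $b\neq a$, expand $\frac{1}{(n+a)(n+b)^{m}}$ in partial fractions in $n$. Summing over $n$ gives $\sum_{j=2}^{m}\frac{\zeta(j,b)}{(a-b)^{m+1-j}}$ plus $\frac{1}{(a-b)^{m}}\sum_{n}\left(\frac{1}{n+a}-\frac{1}{n+b}\right)=\frac{\psi(b)-\psi(a)}{(a-b)^{m}}$. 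Then rewrite $\psi=-\gamma_{0}$.
\item If $b=a$ (possible only when $a\in\{1,\ldots,q-1\}$), the inner sum is $\zeta(m+1,a)$. This is the source of the $\lambda_{a}$ term.
\end{itemize}
The stated formula has the shifted indices $(b-a)^{m+1}$, $j\le m+1$ and $\zeta(m+2,a)$. So we will redo the computation at $s=m+1$ to reconcile the index conventions and signs; the statement's indices suggest it may be intended with $m$ replaced by $m+1$. This bookkeeping is the main obstacle: the partial-fraction signs and exponents have to be matched carefully against the displayed form.

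For (\ref{hdi}) we use the Mellin representation $\frac{1}{n+1}-\frac{1}{n+b}=\int_{0}^{\infty}(e^{-x(n+1)}-e^{-x(n+b)})\,dx$. Inserting it into (\ref{hd3}) gives
\[
\psi_{H}(a,b)-\psi_{H}(a,1)=\int_{0}^{\infty}\left(e^{-x}-e^{-xb}\right)\sum_{n\ge0}H_{n}(a)e^{-xn}\,dx.
\]
The generating function is obtained by interchanging sums:
\[
\sum_{n\ge0}H_{n}(a)e^{-xn}=\sum_{k\ge0}\frac{1}{k+a}\sum_{n\ge k}e^{-xn}=\frac{\Phi(e^{-x},1;a)}{1-e^{-x}}.
\]
Fubini is justified by positivity for real $b>1$, and the result extends to other $b$ by analytic continuation. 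The factor $1/\Gamma(s)$ in the statement must be read with $s=1$, where it equals $1$.
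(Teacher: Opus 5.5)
Your arguments for (\ref{hd2}), (\ref{hd3}) and (\ref{hdt}) are the paper's. For (\ref{hdi}) you insert $\frac{1}{n+1}-\frac{1}{n+b}=\int_{0}^{\infty}\bigl(e^{-x(n+1)}-e^{-x(n+b)}\bigr)dx$ directly into (\ref{hd3}), whereas the paper integrates the representation of $\zeta_{H}(2,a,b)=\psi_{H}'(a,b)$ in $b$. The two routes are equivalent, and yours is a little more self-contained. Also, since $e^{-x}-e^{-xb}$ has constant sign for every real $b>0$, Tonelli covers all $b>0$ at once.

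The genuine gap is (\ref{hd4}). You leave it unfinished, and the index mismatch you attribute to the statement comes from your own slip. Telescoping (\ref{*2}) gives
\[
\zeta_{H}(s,a,b+1)=\zeta_{H}(s,a,b)-\sum_{n\geq0}\frac{1}{(n+a)(n+b)^{s}},
\]
with exponent $s$, not $s-1$. You can see this from $H_{n-1}(a)=H_{n}(a)-\frac{1}{n+a}$, or from (\ref{*}) with $s$ replaced by $s+1$. At $s=m+1$ this is exactly the relation the paper uses:
\[
\psi_{H}^{(m)}(a,b+1)=\psi_{H}^{(m)}(a,b)+(-1)^{m}m!\sum_{n\geq0}\frac{1}{(n+a)(n+b)^{m+1}}.
\]
So the $b=a$ term is $\zeta(m+2,a)$, and no shift of $m$ is needed.

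Your partial fractions also drop alternating signs: you write powers of $a-b$ without the compensating factors $(-1)^{m}$ and $(-1)^{m-j}$. The correct decomposition is
\[
\frac{1}{(n+a)(n+b)^{m+1}}=\frac{1}{(b-a)^{m+1}}\left(\frac{1}{n+a}-\frac{1}{n+b}\right)-\sum_{j=2}^{m+1}\frac{1}{(b-a)^{m+2-j}(n+b)^{j}}.
\]
Summing over $n$, with $H_{N}(a)-H_{N}(b)\rightarrow\gamma_{0}(a)-\gamma_{0}(b)=\psi(b)-\psi(a)$, gives
\[
\sum_{n\geq0}\frac{1}{(n+a)(n+b)^{m+1}}=\frac{\gamma_{0}(a)-\gamma_{0}(b)}{(b-a)^{m+1}}-\sum_{j=2}^{m+1}\frac{\zeta(j,b)}{(b-a)^{m+2-j}}.
\]

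This reproduces every exponent in (\ref{hd4}), but with $\gamma_{0}(a)-\gamma_{0}(b)$ where the display has $\gamma_{0}(b)-\gamma_{0}(a)$. That sign in the display is a misprint, not a defect of your method. For $a=1$, $b=2$, $m=1$ the inner sum is $\sum_{k\geq1}\frac{1}{k(k+1)^{2}}=2-\zeta(2)>0$, while the displayed bracket evaluates to $-\zeta(2)$. The corrected sign is also what the $m=0$ case requires, namely the paper's own (\ref{hd5}). So once you fix the exponent and the partial fractions, your argument proves (\ref{hd4}) with that sign corrected.
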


\begin{proof}
The assertions (\ref{hd2}) and (\ref{hd3}) follow from (\ref{hdigamma}) and
(\ref{hd2}), respectively. For the assertion (\ref{hdt}), we differentiate
both sides of (\ref{hd3}) with respect to $b$ and see that
\[
\frac{\partial}{\partial b}\psi_{H}\left(  a,b\right)  =\left(  -1\right)
^{2}\sum_{n=0}^{\infty}\frac{H_{n}\left(  a\right)  }{\left(  n+b\right)
^{2}}.
\]
This implies (\ref{hdt}). The interchange of the order of summation and
derivative can be justified by the absolute convergence of the series.

For the assertion (\ref{hd4}), we write (\ref{hdt}) as
\[
\psi_{H}^{\left(  m\right)  }\left(  a,b+1\right)  =\psi_{H}^{\left(
m\right)  }\left(  a,b\right)  +\left(  -1\right)  ^{m}m!\sum_{n=0}^{\infty
}\frac{1}{\left(  n+b\right)  ^{m+1}\left(  n+a\right)  }%
\]
and sum over $b$ from $1$ to $q-1$. Then, (\ref{hd4}) follows from partial
fraction decomposition and%
\[
H_{N}\left(  a\right)  =\log\left(  N+a\right)  +\gamma_{0}\left(  a\right)
+O\left(  \frac{1}{N+a}\right)  ,\text{ as }N\rightarrow\infty.
\]

The assertion (\ref{hdi}) follows from (\ref{hdt}) and%
\[
\zeta_{H}\left(  2,a,b\right)  =\frac{1}{\Gamma\left(  s\right)  }\int%
_{0}^{\infty}\frac{xe^{-xb}}{1-e^{-x}}\Phi\left(  e^{-x},1;a\right)  dx.
\]

\end{proof}

As a consequence of (\ref{hd3}), we can state the following result regarding
the analyticity of $\psi_{H}\left(  a,z\right)  $.

\begin{corollary}
The function $\psi_{H}\left(  a,z\right)  $ is analytic for all $z$ except for
simple poles at $z=-v,$ $v\in\mathbb{N}\cup\left\{  0\right\}  $, with their
respective residues $\left(  -v-1\right)  H_{v}\left(  a\right)  .$
\end{corollary}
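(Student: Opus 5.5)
The plan is to read everything off the first representation in (\ref{hd3}), continued in the second variable:
\[
\psi_{H}\left(a,z\right)=\psi_{H}\left(a,1\right)+\left(z-1\right)\sum_{n=0}^{\infty}\frac{H_{n}\left(a\right)}{\left(n+1\right)\left(n+z\right)} .
\]

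\textbf{Step 1 (analyticity off the poles).} Fix a compact set $K\subset\mathbb{C}\backslash\left\{0,-1,-2,\ldots\right\}$. Since $H_{n}\left(a\right)=\log\left(n+a\right)+\gamma_{0}\left(a\right)+O\left(1/n\right)$, for $z\in K$ and $n$ large the summand is $O\left(\log n/n^{2}\right)$, uniformly in $z$. By the Weierstrass M-test the series converges uniformly on $K$. Each summand is analytic there, so the right-hand side is analytic on $\mathbb{C}\backslash\left\{0,-1,-2,\ldots\right\}$. It agrees with $\psi_{H}\left(a,z\right)$ for real $z=b>0$, so it gives the analytic continuation.

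\textbf{Step 2 (isolating the singular term).} Near $z=-v$, $v\in\mathbb{N}\cup\left\{0\right\}$, split off the single term $n=v$:
\[
\psi_{H}\left(a,z\right)=\frac{\left(z-1\right)H_{v}\left(a\right)}{\left(v+1\right)\left(z+v\right)}+R_{v}\left(z\right).
\]
By Step 1 applied to the remaining sum, $R_{v}$ is analytic in a neighbourhood of $z=-v$. Hence the pole is at most simple.

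\textbf{Step 3 (the residue).} The residue is the value of $\left(z+v\right)\psi_{H}\left(a,z\right)$ at $z=-v$. The factor $z-1$ contributes $\left(-v-1\right)$, so the residue is $\left(-v-1\right)H_{v}\left(a\right)$ multiplied by the normalising factor $1/\left(v+1\right)$ carried by the summand. The pole is genuinely present because $H_{v}\left(a\right)\neq0$; this needs checking for the admissible $a$, and for $0<a<2$ one has $H_{v}\left(a\right)>0$.

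\textbf{Main obstacle.} The crux is Step 3, namely the bookkeeping of the factor $1/\left(v+1\right)$ against the stated form $\left(-v-1\right)H_{v}\left(a\right)$.

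\begin{itemize}
\item I would first recompute the residue from the second form in (\ref{hd3}). There the singular summand is $-H_{v}\left(a\right)/\left(v+z\right)$, and the factor $\left(-v-1\right)$ must be seen to come from the prefactor $z-1$ evaluated at $z=-v$.
\item I would then confirm which normalisation makes the two expressions coincide with $\left(-v-1\right)H_{v}\left(a\right)$. If needed, I would check the case $v=0$ directly, where the candidate residue is $-H_{0}\left(a\right)=-1/a$.
\item I would also cross-check against (\ref{hdt}) with $m=1$, which gives $\psi_{H}^{\prime}\left(a,z\right)=\zeta_{H}\left(2,a,z\right)$. The coefficient of the double pole of $\zeta_{H}\left(2,a,z\right)$ at $z=-v$ must equal minus the residue of $\psi_{H}\left(a,z\right)$ there, which pins down the constant unambiguously.
\end{itemize}
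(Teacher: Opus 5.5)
Your Steps 1 and 2 are what the paper has in mind; its only justification is that the corollary is ``a consequence of (\ref{hd3})''. They correctly give analyticity off $\{0,-1,-2,\ldots\}$ and at most simple poles at $z=-v$.

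The gap is Step 3. It cannot deliver the stated residue $(-v-1)H_{v}(a)$, because that formula is false for $v\ge 1$. Your own bookkeeping already shows this. The singular term $\frac{(z-1)H_{v}(a)}{(v+1)(z+v)}$ has residue $\frac{(-v-1)H_{v}(a)}{v+1}=-H_{v}(a)$.

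The second form of (\ref{hd3}) gives the same value directly. Its singular summand is $-H_{v}(a)/(z+v)$, and that form has no prefactor $z-1$ at all. So your bullet saying the factor $(-v-1)$ ``must be seen to come from the prefactor $z-1$'' misreads it.

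Your other two checks do not rescue the stated formula either.
\begin{itemize}
\item The $v=0$ check cannot detect the discrepancy, since $(-0-1)H_{0}(a)=-H_{0}(a)$.
\item The (\ref{hdt}) cross-check, which you rightly say pins the constant down, confirms $-H_{v}(a)$. Near $z=-v$ we have $\zeta_{H}(2,a,z)=H_{v}(a)(z+v)^{-2}+(\text{analytic})$, and $\psi_{H}^{\prime}(a,z)=\zeta_{H}(2,a,z)$. So the residue of $\psi_{H}(a,z)$ there is $-H_{v}(a)$.
\end{itemize}
A concrete instance: for $a=1$, $v=1$ the residue is $-H_{1}(1)=-\tfrac{3}{2}$, whereas the statement claims $-3$.

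No normalisation reconciles the two, so you should not look for one. Finish Step 3 by concluding that $\psi_{H}(a,z)$ has simple poles at $z=-v$ with residue $-H_{v}(a)$. This is nonzero for $0<a<2$, and it is the exact analogue of the residue $-1$ of the classical digamma function. Then flag the corollary's residue formula as incorrect. It appears to come from evaluating $z-1$ at $z=-v$ in the first form of (\ref{hd3}) while dropping the factor $1/(v+1)$, and it agrees with the correct value only at $v=0$.
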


From (\ref{hd3}), we can also derive the following difference formula:

\begin{corollary}
Let $m\in\mathbb{N}$. Then we have
\begin{align*}
&  \psi_{H}\left(  a,b+m\right)  -\psi_{H}\left(  a,b\right) \\
&  =\left\{
\begin{array}
[c]{ll}%
{\displaystyle\sum\limits_{k=0}^{m-1}}
\dfrac{H_{k}\left(  a\right)  }{k+b}+%
{\displaystyle\sum\limits_{k=0}^{m-1}}
\dfrac{\psi\left(  b+m\right)  -\psi\left(  a+m-k\right)  }{b-a+k}, &
a\not =b,\medskip\\%
{\displaystyle\sum\limits_{k=0}^{m-1}}
\dfrac{H_{k}\left(  a\right)  }{k+a}+%
{\displaystyle\sum\limits_{k=1}^{m-1}}
\dfrac{\psi\left(  a+m\right)  -\psi\left(  a+m-k\right)  }{k}+\psi^{\prime
}\left(  a+m\right)  , & a=b.
\end{array}
\right.
\end{align*}
In particular, for $m=1$,
\begin{equation}
\psi_{H}\left(  a,b+1\right)  -\psi_{H}\left(  a,b\right)  =\left\{
\begin{array}
[c]{ll}%
\dfrac{\psi\left(  b\right)  -\psi\left(  a\right)  }{b-a}, & a\not =%
b,\medskip\\
\zeta\left(  2,a\right)  , & a=b.
\end{array}
\right.  \label{hd5}%
\end{equation}

\end{corollary}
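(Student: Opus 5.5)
The plan is to work directly from (\ref{hd3}). It says that $\psi_{H}\left(a,b\right)-\psi_{H}\left(a,1\right)=\sum_{n\geq0}\left(H_{n}\left(a\right)/(n+1)-H_{n}\left(a\right)/(n+b)\right)$, and the series converges absolutely. Subtracting the two instances of (\ref{hd3}) at $b+m$ and at $b$ gives
\[
\psi_{H}\left(a,b+m\right)-\psi_{H}\left(a,b\right)=\lim_{N\rightarrow\infty}\left(\sum_{n=0}^{N}\frac{H_{n}\left(a\right)}{n+b}-\sum_{n=0}^{N}\frac{H_{n}\left(a\right)}{n+m+b}\right).
\]
I will keep the finite truncation at $N$ throughout, because the two sums diverge separately.

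In the second sum I shift the index $n\mapsto n-m$, so it becomes $\sum_{n=m}^{N+m}H_{n-m}\left(a\right)/(n+b)$. The block $m\leq n\leq N$ then combines with the first sum, and three pieces remain:
\begin{itemize}
\item the initial terms $\sum_{k=0}^{m-1}H_{k}\left(a\right)/(k+b)$;
\item the combined block $\sum_{n=m}^{N}\left(H_{n}\left(a\right)-H_{n-m}\left(a\right)\right)/(n+b)$;
\item a tail $-\sum_{n=N+1}^{N+m}H_{n-m}\left(a\right)/(n+b)$, which has $m$ terms, each $O\left(\log N/N\right)$, so it tends to $0$.
\end{itemize}
Next, $H_{n}\left(a\right)-H_{n-m}\left(a\right)=\sum_{k=0}^{m-1}1/(n-k+a)$. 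The combined block is therefore a finite sum over $k$ of the series $\sum_{n\geq m}1/\left((n-k+a)(n+b)\right)$, each of which converges because its terms are $O(n^{-2})$.

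For each $k$ I use partial fractions. If $b-a+k\neq0$, then
\[
\frac{1}{(n-k+a)(n+b)}=\frac{1}{b-a+k}\left(\frac{1}{n-k+a}-\frac{1}{n+b}\right).
\]
I then sum over $n\geq m$ using $\sum_{n\geq0}\left(1/(n+x)-1/(n+y)\right)=\psi\left(y\right)-\psi\left(x\right)$, applied with $x=a+m-k$ and $y=b+m$. This produces exactly $\left(\psi\left(b+m\right)-\psi\left(a+m-k\right)\right)/(b-a+k)$, which gives the case $a\neq b$. If $a=b$, the terms with $k\geq1$ are handled identically and give $\left(\psi\left(a+m\right)-\psi\left(a+m-k\right)\right)/k$. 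The term $k=0$ is instead $\sum_{n\geq m}(n+a)^{-2}=\psi^{\prime}\left(a+m\right)$, which gives the second case.

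For (\ref{hd5}) I set $m=1$, so the first piece is $H_{0}\left(a\right)/b=1/(ab)$. I then use $\psi\left(x+1\right)=\psi\left(x\right)+1/x$. For $a\neq b$ this gives
\[
\frac{1}{ab}+\frac{\psi\left(b\right)-\psi\left(a\right)+1/b-1/a}{b-a}=\frac{\psi\left(b\right)-\psi\left(a\right)}{b-a},
\]
since $(1/b-1/a)/(b-a)=-1/(ab)$. For $a=b$ the recurrence $\psi^{\prime}\left(a\right)=\psi^{\prime}\left(a+1\right)+1/a^{2}$ gives $1/a^{2}+\psi^{\prime}\left(a+1\right)=\psi^{\prime}\left(a\right)=\zeta\left(2,a\right)$.

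I expect the main obstacle to be keeping the reindexing honest. The individual sums diverge, so each regrouping must be done at finite $N$, and the vanishing of the boundary tail must be checked before passing to the limit. A related point is that $b-a+k$ could vanish for some $k\geq1$ when $a-b$ is a positive integer less than $m$. The statement implicitly excludes this when $a\neq b$, and in that event the corresponding term would instead be $\psi^{\prime}$ by the same argument as in the case $k=0$.
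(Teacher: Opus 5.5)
Your proof is correct and matches the paper's route. The paper only says the formula follows from (\ref{hd3}); subtracting the two instances, reindexing at finite $N$, and summing $\sum_{n\geq m}1/\left((n-k+a)(n+b)\right)$ by partial fractions (or as $\psi^{\prime}(a+m)$ when $k=0$, $a=b$) is what produces the stated form. Your side remark is also right: the first case tacitly requires $a-b\notin\{1,\ldots,m-1\}$, and for such a $k$ the corresponding term becomes $\psi^{\prime}(b+m)$.
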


In addition to (\ref{hd5}), for any $0<b\in\mathbb{R}$, it follows from
(\ref{hd3}) that
\[
\psi_{H}\left(  a,b+1\right)  -\psi_{H}\left(  a,b\right)  =\sum_{k=0}%
^{\infty}\left(  -1\right)  ^{k}\zeta_{H}\left(  k+2,a,b\right)  ,
\]
provided that the series converges. Thus, we have%
\[
\sum_{k=0}^{\infty}\left(  -1\right)  ^{k}\zeta_{H}\left(  k+2,a,b\right)
=\left\{
\begin{array}
[c]{ll}%
\dfrac{\psi\left(  b\right)  -\psi\left(  a\right)  }{b-a}, & a\not =b,\\
\zeta\left(  2,a\right)  , & a=b.
\end{array}
\right.
\]
In the case $a\not =b$, differentiating both sides with respect to $b$ and $a$
gives
\[
\sum_{k=0}^{\infty}\left(  -1\right)  ^{k+1}\left(  k+2\right)  \zeta
_{H}\left(  k+3,a,b\right)  =\dfrac{\zeta\left(  2,b\right)  }{b-a}%
-\dfrac{\psi\left(  b\right)  -\psi\left(  a\right)  }{\left(  b-a\right)
^{2}},
\]
and
\[
\sum_{k=0}^{\infty}\left(  -1\right)  ^{k+1}\zeta_{H^{\left(  2\right)  }%
}\left(  k+2,a,b\right)  =\dfrac{\zeta\left(  2,a\right)  }{a-b}+\dfrac
{\psi\left(  b\right)  -\psi\left(  a\right)  }{\left(  b-a\right)  ^{2}},
\]
respectively.

The harmonic digamma function has the following Taylor expansion:

\begin{corollary}
We have
\begin{equation}
\sum\limits_{m=2}^{\infty}\left(  -1\right)  ^{m}\zeta_{H}\left(
m,a,b\right)  t^{m-1}=\psi_{H}\left(  a,b+t\right)  -\psi_{H}\left(
a,b\right)  . \label{dgtaylor}%
\end{equation}

\end{corollary}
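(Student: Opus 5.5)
The expansion (\ref{dgtaylor}) is simply the Taylor series of $t\mapsto\psi_H(a,b+t)$ about $t=0$. Its coefficients come from the derivative formula (\ref{hdt}) in the preceding proposition, and convergence is controlled by the location of the poles of $\psi_H(a,\cdot)$ given in the corollary on analyticity.

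First, by the corollary following (\ref{hd3}), $z\mapsto\psi_H(a,z)$ is analytic except for simple poles at $z=0,-1,-2,\ldots$. Hence $t\mapsto\psi_H(a,b+t)$ is analytic in the disc $|t|<\operatorname{dist}(b,\{0,-1,-2,\ldots\})$; for $b>0$ this distance is $b$. In that disc Taylor's theorem gives
\[
\psi_H(a,b+t)-\psi_H(a,b)=\sum_{m=1}^{\infty}\frac{\psi_H^{(m)}(a,b)}{m!}\,t^{m}.
\]
Next, we substitute (\ref{hdt}), namely $\psi_H^{(m)}(a,b)=(-1)^{m+1}m!\,\zeta_H(m+1,a,b)$. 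This turns the coefficient of $t^{m}$ into $(-1)^{m+1}\zeta_H(m+1,a,b)$. Shifting the index $m\mapsto m-1$ then yields $\sum_{m=2}^{\infty}(-1)^{m}\zeta_H(m,a,b)\,t^{m-1}$, which is exactly (\ref{dgtaylor}).

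Alternatively, one can avoid the analyticity corollary and argue termwise from (\ref{hd3}). For $|t|<b$ we write
\[
\psi_H(a,b+t)-\psi_H(a,b)=\sum_{n\ge0}H_n(a)\left(\frac{1}{n+b}-\frac{1}{n+b+t}\right),
\]
expand $\frac{1}{n+b}-\frac{1}{n+b+t}=\sum_{m\ge2}(-1)^{m}t^{m-1}(n+b)^{-m}$, and interchange the two sums. This interchange is the only delicate step. It is justified by absolute convergence: since $H_n(a)=O(\log n)$,
\[
\sum_{n}\sum_{m\ge2}|H_n(a)|\,|t|^{m-1}(n+b)^{-m}=\sum_n\frac{|H_n(a)|\,|t|}{(n+b)(n+b-|t|)}<\infty \quad\text{for } |t|<b .
\]
After the interchange, the inner sums over $n$ are $\zeta_H(m,a,b)$, which gives (\ref{dgtaylor}) directly.
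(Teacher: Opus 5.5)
Your proof is correct and takes essentially the paper's route: the paper gives no separate argument, and the corollary is simply the Taylor expansion of $\psi_H(a,b+t)$ about $t=0$, with coefficients supplied by the derivative formula (\ref{hdt}). Your termwise derivation from (\ref{hd3}) is a valid self-contained check, and it also makes explicit the range of validity ($|t|<b$ for $b>0$, or more generally $|t|<\operatorname{dist}(b,\{0,-1,-2,\ldots\})$), which the paper leaves unstated.
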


The techniques developed above also yield the following parametric harmonic
analogue of the summation formula
\begin{align}
\sum\limits_{m=2}^{\infty}\zeta\left(  m,a\right)  \frac{z^{m+p}}{m+p}  &
=\sum_{j=0}^{p}\binom{p}{j}\zeta^{\prime}\left(  -j,a-z\right)  z^{p-j}%
-\sum_{j=0}^{p-1}\frac{\zeta\left(  -j,a\right)  }{p-k}z^{p-j}\nonumber\\
&  +\left(  \psi\left(  a\right)  -H_{p}\right)  \frac{z^{p+1}}{p+1}%
-\zeta^{\prime}\left(  -p,a\right)  ,\text{ }\left\vert z\right\vert
<\left\vert a\right\vert ,\text{ }p\in\mathbb{N}\cup\left\{  0\right\}  .
\label{sf}%
\end{align}
This formula was first recorded by Kanemitsu et al. \cite{KKY} (see also
\cite{DMC,KKSY,MD,SC}). The case $a=1$ was composed by Ramanujan \cite[Entry
28(b)]{Be2} (see also \cite{Ada,KCe,SC}).

\begin{theorem}
For a nonnegative integer $p$, we have%
\begin{align}
\sum\limits_{m=2}^{\infty}\zeta_{H}\left(  m,a,b\right)  \frac{\left(
-t\right)  ^{m+p}}{m+p}  &  =\sum_{j=0}^{p-1}\psi_{H}^{\left(  -j-1\right)
}\left(  a,b+t\right)  \left\langle p\right\rangle _{j}\left(  -t\right)
^{p-j}+\frac{\psi_{H}\left(  a,b\right)  }{p+1}\left(  -t\right)
^{p+1}\nonumber\\
&  +p!\left(  \psi_{H}^{\left(  -p-1\right)  }\left(  a,b+t\right)  -\psi
_{H}^{\left(  -p-1\right)  }\left(  a,b\right)  \right)  , \label{hzsf}%
\end{align}
where%
\[
\psi_{H}^{\left(  -p-1\right)  }\left(  a,t\right)  =\int\psi_{H}^{\left(
-p\right)  }\left(  a,t\right)  dt\text{ with }\psi_{H}^{\left(  -1\right)
}\left(  a,t\right)  =\int\psi_{H}\left(  a,t\right)  dt.
\]

\end{theorem}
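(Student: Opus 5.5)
The plan is to derive (\ref{hzsf}) from the Taylor expansion (\ref{dgtaylor}) of the harmonic digamma function by repeated integration in $t$, exactly as the classical formula (\ref{sf}) is obtained from $\sum_{m\ge2}(-1)^m\zeta(m,a)t^{m-1}=\psi(a+t)-\psi(a)$.

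First, replace $t$ by $-u$ in (\ref{dgtaylor}), which gives
\[
\sum_{m=2}^{\infty}\zeta_H(m,a,b)(-1)^{m}(-u)^{m-1}=\psi_H(a,b-u)-\psi_H(a,b).
\]
It is cleaner to keep the variable as is: multiply (\ref{dgtaylor}) by $(-t)^{p}\cdot(-1)$, so that the left side becomes $\sum_{m\ge2}\zeta_H(m,a,b)(-t)^{m+p-1}\cdot(-1)$. Then integrate from $0$ to $t$. Term by term, the left side produces exactly $\sum_{m\ge2}\zeta_H(m,a,b)\frac{(-t)^{m+p}}{m+p}$. Termwise integration is legitimate for $|t|<|b|$ (the radius in Theorem \ref{hztaylor}), because the series converges uniformly on compact subdisks. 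The right side becomes
\[
-\int_0^t(-x)^p\bigl(\psi_H(a,b+x)-\psi_H(a,b)\bigr)\,dx .
\]
The constant piece integrates to $\frac{\psi_H(a,b)}{p+1}(-t)^{p+1}$, matching the second term of (\ref{hzsf}).

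Second, evaluate $-\int_0^t(-x)^p\psi_H(a,b+x)\,dx$ by integrating by parts $p$ times. At each step, differentiate the power $(-x)^{p}$, producing the factors $\langle p\rangle_j$ and signs, and integrate $\psi_H^{(-j)}(a,b+x)$ to $\psi_H^{(-j-1)}(a,b+x)$, using the antiderivative notation fixed in the statement. The boundary terms at $x=t$ give $\sum_{j=0}^{p-1}\langle p\rangle_j\psi_H^{(-j-1)}(a,b+t)(-t)^{p-j}$, while those at $x=0$ vanish because of the factor $x^{p-j}$ with $p-j\ge1$. The final remaining integral is $p!\int_0^t\psi_H^{(-p)}(a,b+x)\,dx$, which equals $p!\bigl(\psi_H^{(-p-1)}(a,b+t)-\psi_H^{(-p-1)}(a,b)\bigr)$. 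This yields (\ref{hzsf}).

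The main obstacle is the sign bookkeeping: reconciling $(-x)^p$ and the derivative $\frac{d}{dx}(-x)^{p-j}=-(p-j)(-x)^{p-j-1}$ with the stated coefficients $\langle p\rangle_j(-t)^{p-j}$ and the positive $p!$. I would verify the cases $p=0$ and $p=1$ explicitly to fix the conventions, then argue by induction on $p$. A secondary point is justifying that $\psi_H(a,\cdot)$ is analytic on the segment $[b,b+t]$. This follows from the Corollary on the analyticity of $\psi_H(a,z)$, since its poles lie at nonpositive integers, and $|t|<|b|$ keeps $b+x$ away from them for real $b>0$. That same condition ensures the antiderivatives $\psi_H^{(-j)}$ are well defined along the path.
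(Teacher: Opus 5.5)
Your argument is the paper's own: integrate (\ref{dgtaylor}) against a power of the variable from $0$ to $t$ and integrate by parts $p$ times. The paper uses $t^p$ and leaves the final multiplication by $(-1)^p$ implicit, while your weight $(-x)^p$ lands directly on the stated form.

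Fix one sign slip. Since $(-1)^m t^{m-1}(-t)^p=-(-t)^{m+p-1}$, the correct multiplier is $(-t)^p$ rather than $-(-t)^p$. The right side is then $+\int_0^t(-x)^p\bigl(\psi_H(a,b+x)-\psi_H(a,b)\bigr)\,dx$. That is exactly the expression your later evaluations of the constant piece, the boundary terms and the last integral actually use; with your stray minus sign, each of them would come out negated.
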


\begin{proof}
We first multiply (\ref{dgtaylor}) by $t^{p}$ and then integrate with respect
to $t$ from $0$ to $z$:%
\[
\sum\limits_{m=2}^{\infty}\left(  -1\right)  ^{m}\zeta_{H}\left(
m,a,b\right)  \frac{z^{m+p}}{m+p}=\int\limits_{0}^{z}\psi_{H}\left(
a,b+t\right)  t^{p}dt-\psi_{H}\left(  a,b\right)  \frac{z^{p+1}}{p+1}.
\]
Twice integrating by parts gives
\begin{align*}
\int\limits_{0}^{z}\psi_{H}\left(  a,b+t\right)  t^{p}dt  &  =z^{p}\psi
_{H}^{\left(  -1\right)  }\left(  a,b+z\right)  +\left(  -1\right)
^{1}pz^{p-1}\psi_{H}^{\left(  -2\right)  }\left(  a,b+z\right) \\
&  \quad+\left(  -1\right)  ^{2}p\left(  p-1\right)  \int\limits_{0}^{z}%
\psi_{H}^{\left(  -2\right)  }\left(  a,b+t\right)  t^{p-2}dt.
\end{align*}
Repeating this procedure $\left(  p-2\right)  $ times completes the proof
\end{proof}

Building upon these techniques, we now establish a further identity.

\begin{theorem}
For $p\in\mathbb{N}$, we have
\begin{align}
&  \sum_{j=0}^{\infty}\left(  -1\right)  ^{j-1}\frac{\left(  s\right)  _{j}%
}{j!}\zeta_{H}\left(  s+j,a,b\right)  \frac{x^{j+p}}{j+p}\nonumber\\
&  =\sum\limits_{v=1}^{p-1}\frac{\left\langle p-1\right\rangle _{v-1}%
}{\left\langle s-1\right\rangle _{v}}\zeta_{H}\left(  s-v,a,b+x\right)
x^{p-v}-\frac{\left\langle p-1\right\rangle _{p-1}}{\left\langle
s-1\right\rangle _{p-1}}\sigma\left(  s,p,a,b,x\right)  , \label{05}%
\end{align}
where
\[
\sigma\left(  s,p,a,b,x\right)  =\left\{
\begin{array}
[c]{ll}%
\dfrac{1}{p-s}\left\{  \zeta_{H}\left(  s-p,a,b+x\right)  -\zeta_{H}\left(
s-p,a,b\right)  \right\}  , & \operatorname{Re}\left(  s-p\right)
>1,\smallskip\\
\psi_{H}\left(  a,b+x\right)  -\psi_{H}\left(  a,b\right)  , & s-p=1.
\end{array}
\right.
\]

\end{theorem}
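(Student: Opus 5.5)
The plan is to integrate the Taylor expansion (\ref{04}) of Theorem \ref{hztaylor} against a power of $t$, and then to integrate by parts repeatedly in the variable $t$. This is the same scheme as in the proof of (\ref{hzsf}). The basic tool is the differentiation rule already used in the proof of (\ref{6a}):
\[
\frac{\partial}{\partial t}\zeta_{H}\left(s,a,b+t\right)=-s\,\zeta_{H}\left(s+1,a,b+t\right).
\]
Read backwards, it says that $-\zeta_{H}\left(s-1,a,b+t\right)/\left(s-1\right)$ is an antiderivative of $\zeta_{H}\left(s,a,b+t\right)$ in $t$.

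\textbf{Step 1.} Multiply (\ref{04}) by $t^{p-1}$ and integrate termwise from $0$ to $x$. This is legitimate for $|x|<|b|$ because the power series in $t$ converges uniformly on $[0,x]$. Since $\int_0^x t^{j+p-1}dt=x^{j+p}/(j+p)$, the left-hand side of (\ref{05}) equals
\[
-\int_{0}^{x}\zeta_{H}\left(s,a,b+t\right)t^{p-1}dt .
\]

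\textbf{Step 2.} Integrate by parts once:
\begin{align*}
\int_{0}^{x}\zeta_{H}\left(s,a,b+t\right)t^{p-1}dt
&=-\frac{\zeta_{H}\left(s-1,a,b+x\right)}{s-1}x^{p-1}\\
&\quad+\frac{p-1}{s-1}\int_{0}^{x}\zeta_{H}\left(s-1,a,b+t\right)t^{p-2}dt .
\end{align*}
For $p\geq2$ the boundary term at $t=0$ vanishes.

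\textbf{Step 3.} Iterate Step 2. After $v$ steps the coefficient of the boundary term $\zeta_{H}\left(s-v,a,b+x\right)x^{p-v}$ is $\left\langle p-1\right\rangle_{v-1}/\left\langle s-1\right\rangle_{v}$. An induction on $v$ pins down the accumulated products and the alternating signs. After $p-1$ steps the remaining integral is
\[
\frac{\left\langle p-1\right\rangle_{p-1}}{\left\langle s-1\right\rangle_{p-1}}\int_{0}^{x}\zeta_{H}\left(s-p+1,a,b+t\right)dt .
\]
Together with the overall minus sign from Step 1, this produces the sum over $v=1,\dots,p-1$ in (\ref{05}).

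\textbf{Step 4.} Evaluate the last integral, which is where $\sigma$ comes from.
\begin{itemize}
\item If $s-p\neq1$, the antiderivative rule gives $\left(\zeta_{H}(s-p,a,b+x)-\zeta_{H}(s-p,a,b)\right)/(p-s)$. This is the first case of $\sigma$.
\item If $s-p=1$, the integrand is $\zeta_{H}(2,a,b+t)$. By (\ref{hdt}) with $m=1$ this equals $\frac{\partial}{\partial t}\psi_{H}(a,b+t)$, so the integral is $\psi_{H}(a,b+x)-\psi_{H}(a,b)$. This is the second case.
\end{itemize}

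\textbf{Main obstacle: bookkeeping and validity of the manipulations.} The integration by parts requires that none of $s-1,\dots,s-p+1$ vanish and that the functions $\zeta_{H}(s-v,a,b+t)$ be analytic along the path $[0,x]$. I would first prove the identity for $\operatorname{Re}(s)$ large, where every series converges absolutely and every $\zeta_{H}$ involved is given by its Dirichlet series. I would then extend it by analytic continuation using Theorem \ref{teo1}. This extension is why the stated condition $\operatorname{Re}(s-p)>1$ appears in the first case of $\sigma$.

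The remaining point is to check carefully that the alternating signs from the repeated integrations, combined with the sign $(-1)^{j-1}$ in the statement, reproduce exactly the signs displayed in (\ref{05}). I expect this to be the step most prone to error. I would verify it first for $p=1$ and $p=2$ before running the induction.
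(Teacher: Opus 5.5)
Your proposal is correct and is essentially the paper's own argument: integrate (\ref{04}) against $t^{p-1}$ over $[0,x]$, apply the reduction $I(s,p-1)=-\frac{x^{p-1}}{s-1}\zeta_{H}(s-1,a,b+x)+\frac{p-1}{s-1}I(s-1,p-2)$ a total of $p-1$ times, and evaluate the leftover integral $\int_{0}^{x}\zeta_{H}(s-p+1,a,b+t)\,dt$ either by the antiderivative rule or, when $s-p=1$, by (\ref{hdt}). Two small simplifications are available: every boundary term carries the same minus sign, so there is no alternation to track, and in both cases of $\sigma$ every $\zeta_{H}$-value that occurs has real part $>1$, so the identity follows directly from the Dirichlet series and the analytic-continuation step is unnecessary.
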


\begin{proof}
From (\ref{04}) we have%
\[
\sum_{j=0}^{\infty}\left(  -1\right)  ^{j}\frac{\left(  s\right)  _{j}}%
{j!}\zeta_{H}\left(  s+j,a,b\right)  \frac{x^{j+p}}{j+p}=\int\limits_{0}%
^{x}\zeta_{H}\left(  s,a,b+t\right)  t^{p-1}dt.
\]
Let
\[
I\left(  s,p-1\right)  =\int\limits_{0}^{x}\zeta_{H}\left(  s,a,b+t\right)
t^{p-1}dt.
\]
Integration by parts gives%
\[
I\left(  s,p-1\right)  =-\frac{x^{p-1}}{s-1}\zeta_{H}\left(  s-1,a,b+x\right)
+\frac{p-1}{s-1}I\left(  s-1,p-2\right)  .
\]
Employing this reduction formula $p-2$ additional times yields%
\begin{align*}
I\left(  s,p-1\right)   &  =-\sum\limits_{v=1}^{p-1}\frac{\left\langle
p-1\right\rangle _{v-1}}{\left\langle s-1\right\rangle _{v}}\zeta_{H}\left(
s-v,a,b+x\right)  x^{p-v}\\
&  +\frac{\left\langle p-1\right\rangle _{p-1}}{\left\langle s-1\right\rangle
_{p-1}}\int\limits_{0}^{x}\zeta_{H}\left(  s-p+1,a,b+t\right)  dt.
\end{align*}
Thus, the desired result follows from
\[
\int\limits_{0}^{x}\zeta_{H}\left(  s+1-p,a,b+t\right)  dt=\sigma\left(
s,p,a,b,x\right)  ,
\]
upon the use of (\ref{hdt}).
\end{proof}

We conclude the paper with the following remarks: In particular cases, the
summation formula (\ref{hzsf}) yields summation formulas for $\zeta_{H}\left(
s,a\right)  $ and $\zeta_{H}\left(  s\right)  $. Further, as a consequence of
(\ref{hzsf}) one can deduce that%
\begin{align*}
\sum\limits_{m=2}^{\infty}\left(  -1\right)  ^{m}\zeta_{H}\left(
m,a,b\right)  \frac{t^{m}}{m}  &  =\psi_{H}^{\left(  -1\right)  }\left(
a,b+t\right)  -\psi_{H}^{\left(  -1\right)  }\left(  a,b\right)  -t\psi
_{H}\left(  a,b\right)  ,\\
\sum\limits_{m=1}^{\infty}\zeta_{H}\left(  2m,a,b\right)  \frac{t^{2m}}{m}  &
=\psi_{H}^{\left(  -1\right)  }\left(  a,b+t\right)  -2\psi_{H}^{\left(
-1\right)  }\left(  a,b\right)  +\psi_{H}^{\left(  -1\right)  }\left(
a,b-t\right)  ,\\
\sum\limits_{m=1}^{\infty}\zeta_{H}\left(  2m+1,a,b\right)  \frac{t^{2m+1}%
}{2m+1}  &  =\frac{1}{2}\psi_{H}^{\left(  -1\right)  }\left(  a,b-t\right)
-\frac{1}{2}\psi_{H}^{\left(  -1\right)  }\left(  a,b+t\right)  +t\psi
_{H}\left(  a,b\right)  ,
\end{align*}
and%
\begin{align*}
\sum\limits_{m=1}^{\infty}\zeta_{H}\left(  2m,a,b\right)  t^{2m-1}  &
=\frac{1}{2}\psi_{H}\left(  a,b+t\right)  -\frac{1}{2}\psi_{H}\left(
a,b-t\right)  ,\\
\sum\limits_{m=1}^{\infty}\zeta_{H}\left(  2m+1,a,b\right)  t^{2m}  &
=-\frac{1}{2}\psi_{H}\left(  a,b-t\right)  -\frac{1}{2}\psi_{H}\left(
a,b+t\right)  +\psi_{H}\left(  a,b\right)  .
\end{align*}
In addition, for suitable special values of argument $t$, further series
identities can be obtained from (\ref{04}), (\ref{05}), and (\ref{hzsf}).

\section{Multiplication formulas}

In this section, we present multiplication formulas for the harmonic zeta
function, harmonic Stieltjes constants and harmonic digamma function.

\begin{lemma}
Let $N\in\mathbb{N}$. For all $s\in\mathbb{C}\backslash\left\{  k\in
\mathbb{Z}:k\leq1\right\}  $, we have
\begin{equation}
\sum\limits_{k=0}^{N-1}\sum\limits_{j=0}^{N-1}\zeta_{H}\left(  s,\frac{a+j}%
{N},\frac{b+j+k}{N}\right)  =N^{s+1}\zeta_{H}\left(  s,a,b\right)  .
\label{raabezetahsab}%
\end{equation}

\end{lemma}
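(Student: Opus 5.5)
The plan is to prove (\ref{raabezetahsab}) first in the half-plane $\operatorname{Re}(s)>1$, by a direct rearrangement of absolutely convergent double series. The identity then extends to $\mathbb{C}\backslash\left\{k\in\mathbb{Z}:k\leq1\right\}$ by Theorem \ref{teo1} and the identity theorem. To keep every argument admissible, I take $a,b>0$, so that $(a+j)/N$ and $(b+j+k)/N$ are never nonpositive integers. I also rename the summation index $k$ in (\ref{raabezetahsab}) to $l$, so that $k$ is free to denote the inner summation index below.

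\emph{Step 1: a double-series form.} For $\operatorname{Re}(s)>1$, expanding $H_{n}(a)$ gives
\[
\zeta_{H}\left(s,a,b\right)=\sum_{k=0}^{\infty}\sum_{d=0}^{\infty}\frac{1}{\left(k+a\right)\left(k+d+b\right)^{s}},
\]
where $n=k+d$. This double series converges absolutely, because the terms are dominated by $H_{n}(a)/(n+b)^{\operatorname{Re}(s)}=O(\log n/n^{\operatorname{Re}(s)})$. Hence it may be rearranged freely.

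\emph{Step 2: unfolding the left-hand side.} By definition,
\[
\zeta_{H}\left(s,\tfrac{a+j}{N},\tfrac{b+j+l}{N}\right)=\sum_{0\leq q\leq m}\frac{N}{Nq+a+j}\cdot\frac{N^{s}}{\left(Nm+b+j+l\right)^{s}}.
\]
Put $k=Nq+j$ and $n=Nm+j+l$. Then the difference is $d=n-k=N(m-q)+l$.

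The key combinatorial observation is the following. As $j$ runs over $\{0,\dots,N-1\}$ and $q\geq0$, the integer $k=Nq+j$ runs over all nonnegative integers exactly once. Independently, as $l$ runs over $\{0,\dots,N-1\}$ and $m-q\geq0$, the integer $d=N(m-q)+l$ runs over all nonnegative integers exactly once. So the quadruple sum over $(j,l,q,m)$ is a bijective reindexing of the double sum over $(k,d)\in\mathbb{N}_{0}^{2}$.

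With this bijection, the left-hand side of (\ref{raabezetahsab}) equals
\[
N^{s+1}\sum_{k,d\geq0}\frac{1}{(k+a)(k+d+b)^{s}}.
\]
By Step 1 this is $N^{s+1}\zeta_{H}(s,a,b)$.

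\emph{Step 3: continuation.} By Theorem \ref{teo1}, both sides are analytic on the connected open set $\mathbb{C}\backslash\left\{k\in\mathbb{Z}:k\leq1\right\}$, since the left-hand side is a finite sum of such functions. They agree on $\operatorname{Re}(s)>1$, so they agree everywhere on that set by the identity theorem.

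The only point needing care is the bijection in Step 2. One must make sure the shift by $l$ is absorbed into the difference $n-k$, rather than into $n$ alone; otherwise some pairs $(k,n)$ would be counted twice or missed. Everything else is routine.
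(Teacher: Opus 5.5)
Your argument is correct and is the same one the paper has in mind: its one-line proof (``start from the double sum and use the definition'') is exactly your computation that each term equals $N^{s+1}/\bigl((Nq+a+j)(Nm+b+j+l)^{s}\bigr)$ for $\operatorname{Re}(s)>1$. You make it rigorous with the reindexing bijection $(j,q,l,m-q)\mapsto(Nq+j,\,N(m-q)+l)$ onto $\mathbb{N}_0^2$, and you add the identity-theorem extension that the paper leaves implicit. Restricting to $a,b>0$ is harmless, since this is what the proof of Theorem~\ref{teo1} actually uses.
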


\begin{proof}
The multiplication formula (\ref{raabezetahsab} follows by starting from the
double sum and using the definition of $\zeta_{H}\left(  s,a,b\right)  $.
\end{proof}

\begin{proposition}
Let $N\in\mathbb{N}$ and $m\in\mathbb{N}\cup\left\{  0\right\}  $. The
constants $\gamma_{H,1}\left(  m,a,b\right)  $ obey the following
multiplication formula:%
\begin{align}
&  \frac{\left(  -1\right)  ^{n}}{N^{2}}\sum\limits_{j=0}^{N-1}\sum
\limits_{k=0}^{N-1}\gamma_{H,1}\left(  n,\frac{a+j}{N},\frac{b+j+k}{N}\right)
\label{raabe}\\
&  \ =\frac{\left(  \log N\right)  ^{n+2}}{\left(  n+2\right)  \left(
n+1\right)  }+\frac{\gamma_{0}\left(  a\right)  \left(  \log N\right)  ^{n+1}%
}{n+1}+\sum\limits_{m=0}^{n}\left(  -1\right)  ^{m}\binom{n}{m}\left(  \log
N\right)  ^{n-m}\gamma_{H,1}\left(  m,a,b\right)  .\nonumber
\end{align}

\end{proposition}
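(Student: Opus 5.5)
The plan is to compare Laurent expansions at $s=1$ on both sides of the multiplication formula (\ref{raabezetahsab}).

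First, the identity (\ref{raabezetahsab}) holds for $\operatorname{Re}(s)>1$ directly from the definition. By Theorem \ref{teo1}, both sides continue meromorphically to a punctured neighbourhood of $s=1$. Hence, by uniqueness of analytic continuation, the identity holds for $0<|s-1|<1$, and the Laurent coefficients of the two sides must agree. I assume the shifted parameters $(a+j)/N$ and $(b+j+k)/N$ stay in the admissible range, which holds for instance when $a,b>0$.

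Next I write out both expansions using (\ref{laurentzetahsab}) with $v=-1$. There $\omega_{1,1}=1$, and by Theorem \ref{teo1} the residue is $a_{-1,1}(a,b)=\gamma_0(a)$. The left-hand side is then
\[
\frac{N^2}{(s-1)^2}+\frac{1}{s-1}\sum_{j=0}^{N-1}N\gamma_0\left(\frac{a+j}{N}\right)+\sum_{n=0}^{\infty}\frac{(-1)^n}{n!}\left(\sum_{j=0}^{N-1}\sum_{k=0}^{N-1}\gamma_{H,1}\left(n,\frac{a+j}{N},\frac{b+j+k}{N}\right)\right)(s-1)^n .
\]
For the right-hand side I write $N^{s+1}=N^2e^{(s-1)\log N}=N^2\sum_{r\ge0}\frac{(\log N)^r}{r!}(s-1)^r$. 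I multiply this by
\[
\frac{1}{(s-1)^2}+\frac{\gamma_0(a)}{s-1}+\sum_{m\ge0}\frac{(-1)^m\gamma_{H,1}(m,a,b)}{m!}(s-1)^m .
\]
The coefficient of $(s-1)^n$ for $n\ge0$ in this Cauchy product is
\[
N^2\left[\frac{(\log N)^{n+2}}{(n+2)!}+\gamma_0(a)\frac{(\log N)^{n+1}}{(n+1)!}+\sum_{m=0}^{n}\frac{(-1)^m\gamma_{H,1}(m,a,b)}{m!}\frac{(\log N)^{n-m}}{(n-m)!}\right].
\]

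Finally I equate the coefficients of $(s-1)^n$ and multiply by $n!/N^2$. The first term becomes $\frac{(\log N)^{n+2}}{(n+2)(n+1)}$, the second becomes $\frac{\gamma_0(a)(\log N)^{n+1}}{n+1}$, and the sum becomes $\sum_m(-1)^m\binom{n}{m}(\log N)^{n-m}\gamma_{H,1}(m,a,b)$. This gives exactly (\ref{raabe}).

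The negative-power coefficients give consistency relations, namely $N^2=N^2$ and a multiplication formula for $\gamma_0$. These are not needed for (\ref{raabe}).

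The only delicate point is justifying that (\ref{raabezetahsab}) holds near $s=1$ and not merely for $\operatorname{Re}(s)>1$. This requires checking that all shifted parameters lie where Theorem \ref{teo1} applies, so that the double pole structure with residue $\gamma_0(\cdot)$ is valid for each summand. Once that is settled, the rest is bookkeeping of the Cauchy product.
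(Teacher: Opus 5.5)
Your proposal is correct and follows the paper's own proof: both compare the Laurent coefficients at $s=1$ of the two sides of (\ref{raabezetahsab}), writing $N^{s+1}=N^{2}e^{(s-1)\log N}$ and matching the coefficients of $(s-1)^{n}$ for $n\geq0$. The only difference is that the paper also simplifies the $(s-1)^{-1}$ term on the left with the digamma multiplication formula, and you rightly note that this step is not needed for (\ref{raabe}).
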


\begin{proof}
For the LHS of (\ref{raabezetahsab}), we deduce from (\ref{laurentzetahsab})
that
\begin{align*}
\sum\limits_{j=0}^{N-1}\sum\limits_{k=0}^{N-1}\zeta_{H}\left(  s,\frac{a+j}%
{N},\frac{b+j+k}{N}\right)   &  =\frac{N^{2}}{\left(  s-1\right)  ^{2}}%
+\frac{N^{2}\gamma_{0}\left(  a\right)  +N^{2}\log N}{s-1}\\
&  +\sum\limits_{n=0}^{\infty}\sum\limits_{j=0}^{N-1}\sum\limits_{k=0}%
^{N-1}\frac{\left(  -1\right)  ^{n}}{n!}\gamma_{H,1}\left(  n,\frac{a+j}%
{N},\frac{b+j+k}{N}\right)  \left(  s-1\right)  ^{n},
\end{align*}
where we have used that
\[
\sum\limits_{j=0}^{N-1}\psi\left(  x+\frac{j}{N}\right)  =N\psi\left(
Nx\right)  -N\log N\text{, with }\psi\left(  a\right)  =-\gamma_{0}\left(
a\right)  .
\]
Again from (\ref{laurentzetahsab}), we deduce that the RHS of
(\ref{raabezetahsab}) can be written as
\begin{align*}
&  N^{s+1}\zeta_{H}\left(  s,a,b\right) \\
&  =\frac{N^{2}}{\left(  s-1\right)  ^{2}}+\frac{N^{2}}{s-1}\left(  \log
N+\gamma_{0}\left(  a\right)  \right)  +\sum\limits_{n=0}^{\infty}\left\{
\frac{N^{2}\left(  \log N\right)  ^{n+2}}{\left(  n+2\right)  !}\right. \\
&  \left.  +\frac{N^{2}\gamma_{0}\left(  a\right)  \left(  \log N\right)
^{n+1}}{\left(  n+1\right)  !}+N^{2}\sum\limits_{m=0}^{n}\frac{\left(  \log
N\right)  ^{n-m}}{\left(  n-m\right)  !}\frac{\left(  -1\right)  ^{m}%
\gamma_{H,1}\left(  m,a,b\right)  }{m!}\right\}  \left(  s-1\right)  ^{n}.
\end{align*}
These complete the proof.
\end{proof}

Considering that $\gamma_{H,1}\left(  0,a,b\right)  =-\psi_{H}\left(
a,b\right)  $, the special case of (\ref{raabe}) yields the Raabe relation for
the function $\psi_{H}\left(  a,b\right)  $.

\begin{corollary}
Let $N\in\mathbb{N}$. Then we have%
\begin{equation}
\frac{1}{N^{2}}\sum\limits_{j=0}^{N-1}\sum\limits_{k=0}^{N-1}\psi_{H}\left(
\frac{a+j}{N},\frac{b+j+k}{N}\right)  =\psi_{H}\left(  a,b\right)
+\psi\left(  a\right)  \log N-\frac{\left(  \log N\right)  ^{2}}{2}.
\label{raabe-digamma}%
\end{equation}

\end{corollary}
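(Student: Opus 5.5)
The corollary is the case $n=0$ of the multiplication formula (\ref{raabe}), rewritten through the identification $\gamma_{H,1}\left(0,a,b\right)=-\psi_{H}\left(a,b\right)$ stated just before it.

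First I set $n=0$ in (\ref{raabe}). The left side becomes
\[
\frac{1}{N^{2}}\sum_{j=0}^{N-1}\sum_{k=0}^{N-1}\gamma_{H,1}\left(0,\frac{a+j}{N},\frac{b+j+k}{N}\right).
\]
The right side becomes
\[
\frac{\left(\log N\right)^{2}}{2}+\gamma_{0}\left(a\right)\log N+\gamma_{H,1}\left(0,a,b\right).
\]

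Next I substitute $\gamma_{H,1}\left(0,x,y\right)=-\psi_{H}\left(x,y\right)$ termwise on both sides. I also use $\gamma_{0}\left(a\right)=-\psi\left(a\right)$, which is already invoked in the proof of (\ref{raabe}). Multiplying the result by $-1$ gives exactly (\ref{raabe-digamma}):
\[
\frac{1}{N^{2}}\sum_{j,k}\psi_{H}\left(\frac{a+j}{N},\frac{b+j+k}{N}\right)=\psi_{H}\left(a,b\right)+\psi\left(a\right)\log N-\frac{\left(\log N\right)^{2}}{2}.
\]

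The only step needing care is the identification $\psi_{H}\left(a,b\right)=-\gamma_{H,1}\left(0,a,b\right)$ itself, together with its validity at every rescaled pair of parameters.

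To justify the identification, I would compare two limit representations. The case $m=0$ of the limit representation (Theorem \ref{teo 1}) gives
\[
\gamma_{H,1}\left(0,a,b\right)=\lim_{x\to\infty}\left(\sum_{n\le x}\frac{H_n(a)}{n+b}-\frac{\log^{2}(x+b)}{2}-\gamma\left(0,a\right)\log(x+b)\right).
\]
With $\gamma\left(0,a\right)=\gamma_{0}\left(a\right)=-\psi\left(a\right)$, this is precisely minus the limit defining $\psi_{H}\left(a,b\right)$ in (\ref{hdigamma}). The two also agree under the alternative normalization (\ref{hd2}), where $\log N$ replaces $\log(N+b)$.

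Theorem \ref{teo 1} was stated under $0<b-a\le1$, which the rescaled pairs need not satisfy. To remove this restriction, I would extend the identity by real-analyticity in $b$: by (\ref{hdt}), both sides have $b$-derivative $\zeta_{H}\left(2,a,b\right)$, the derivative of $-\gamma_{H,1}$ coming from differentiating the Laurent expansion (\ref{laurentzetahsab}) via $\partial_b\zeta_H(s,a,b)=-s\zeta_H(s+1,a,b)$. Alternatively, the identity can be read off directly from the Laurent expansion at $s=1$ together with (\ref{hd3}).

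Once this identification holds for all admissible $\left(a,b\right)$, the corollary follows immediately.
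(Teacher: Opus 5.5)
Your proposal is correct and matches the paper's argument: the paper also obtains (\ref{raabe-digamma}) by setting $n=0$ in (\ref{raabe}) and substituting $\gamma_{H,1}(0,a,b)=-\psi_{H}(a,b)$ and $\gamma_{0}(a)=-\psi(a)$. Your extra justification of the identification is sound and fills in a step the paper only asserts: you match the $m=0$ limit representation with (\ref{hdigamma}), and you remove the restriction $0<b-a\le 1$ by noting that both sides have $b$-derivative $\zeta_{H}(2,a,b)$.
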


\end{document}